\newcommand*{\msc}[2][]{\href{https://mathscinet.ams.org/mathscinet/search/mscdoc.html?code=#2,(#1)}{Primary: #2\ifthenelse{\isempty{#1}}{}{; Secondary: #1}.}}
\declaretheoremstyle[
  headfont=\color{blue}\normalfont\bfseries,
  bodyfont=\color{blue}\normalfont\itshape,
]{colored}
\theoremstyle{plain}
\newtheorem{thm}{Theorem}
\newtheorem{lemma}[thm]{Lemma}
\newtheorem{prop}[thm]{Proposition}
\newtheorem{cor}[thm]{Corollary}
\theoremstyle{definition}
\newtheorem{defi}{Definition}
\theoremstyle{remark}
\newtheorem{exam}{Example}
\newtheorem{rema}{Remark}
\newtheorem{question}{Question}
\newtheoremstyle{cited}%
  {3pt}
  {3pt}
  {\itshape}
  {}
  {\bfseries}
  {.}
  {.5em}
  {\thmname{#1} \thmnumber{#2} \thmnote{\normalfont#3}}
\theoremstyle{cited}
\theoremstyle{cited}
\newtheorem{citedlemma}[thm]{Lemma}
\newtheorem{citedprop}[thm]{Proposition}
\newtheorem{theoA}{Theorem}
\newtheorem{corA}[theoA]{Corollary}
\newcommand{\C}{\ensuremath{\mathcal{C}}}
\newcommand{\D}{\ensuremath{\mathcal{D}}}
\newcommand{\E}{\ensuremath{\mathcal{E}}}
\renewcommand{\P}{\ensuremath{\mathcal{P}}}
\newcommand{\F}{\ensuremath{\mathcal{F}}}
\newcounter{para}
\DeclareMathOperator{\Ob}{Ob}
\DeclareMathOperator{\Res}{\downarrow}
\DeclareMathOperator{\Module}{-Mod}
\DeclareMathOperator{\gldim}{l.gl.dim}
\DeclareMathOperator{\Tor}{Tor}
\DeclareMathOperator{\Ext}{Ext}
\DeclareMathOperator{\Hom}{Hom}
\DeclareMathOperator*{\colim}{colim}
\DeclareMathOperator{\pr}{pr}
\DeclareMathOperator{\Mod}{-Mod}
\DeclareMathOperator{\SL}{SL}
\title[Hereditary infinite category algebras]{Sufficient and necessary conditions for hereditarity of infinite category algebras}
\subjclass[2010]{\msc[20N99, 20C07, 18G20]{18G99}}
\keywords{category algebra, global dimension, unique factorisation property, EI category}
\author{Malte Lackmann}
\address{Mathematisches Institut, Universit\"{a}t Bonn, Germany.}
\email{lackmann@math.uni-bonn.de}
\author{Liping Li}
\address{LCSM (Ministry of Education), School of Mathematics and Statistics, Hunan Normal University, Changsha, Hunan 410081, China.}
\email{lipingli@hunnu.edu.cn}
\thanks{The first author is supported by the ERC Advanced Grant "KL2MG-interactions" (no. 662400) of Wolfgang L\"uck. The paper was written in his time as PhD student of Wolfgang L\"uck and may have overlap with his thesis. The second author is supported by the National Natural Science Foundation of China (Grant No. 11771135), the HuXiang High-Level Talents Gathering Project of Hunan Provincial Science and Technology Department (Grant No. 2019RS1039), and the Research Foundation of Hunan Provincial Education Department (Grant No. 18A016).}
\begin{document}

\maketitle

\begin{abstract}
We describe necessary and sufficient conditions for the hereditarity of the category algebra of an infinite EI category satisfying certain combinatorial assumptions. More generally, we discuss conditions such that the left global dimension of a category algebra equals the maximal left global dimension of the endomorphism algebras of its objects, and classify its projective modules in this case. As applications, we completely classify transporter categories, orbit categories, and Quillen categories with left hereditary category algebras over a field.
\end{abstract}

\section{Introduction}

\subsection{Motivation} EI categories, in which every endomorphism is an isomorphism, have widely appeared in representation theory, topology, and the interface between the two: Keywords are Bredon coefficient systems and homology theories \cite{bredon, davis-lueck, trafo, symonds}, approximation of classifying spaces of groups \cite{blo, jmo}, proofs of the Farrell-Jones conjecture \cite{davis-lueck, BLR}, cohomology theory \cite{xu-transporter}, fusion systems \cite{ao, linckelmann1}, reformulations of Alperin's weight conjecture \cite{linckelmann2, webb}, etc. Major examples of EI categories appearing in these applications include the transporter category of a poset with a group action, the orbit category of a group with respect to a family of subgroups, Quillen categories (also called Frobenius categories), and fusion systems. The representation theory of EI categories embraces the representation theory of groups, quivers, and posets, and is of interest to experts in group theory, algebraic topology and cohomology theory.

The original interest of this paper comes from the quest to describe explicit upper bounds of the global dimension of the left module categories of some specific EI categories appearing in topology; in particular, we sought to classify or characterise those EI categories whose left module categories are hereditary.
In \cite{li}, the second author gave a characterisation of \emph{finite} EI categories whose category algebra over a field is hereditary. However, since many examples appearing in group theory and algebraic topology (for instance, fusion systems of infinite groups) are infinite categories, one is led to consider the problem in a much more general framework, and accordingly, a much more difficult situation.

As for a particular application, the first author's results in \cite{SW} about the existence of Chern characters for rational $G$-homology theories can now be generalised to proper $G$-homology theories with $G$ an infinite discrete group, using Theorem~\ref{orbit-hereditary} below. Furthermore, using the content of this paper, it is possible to generalise certain finite results of Chen and Wang \cite{cw} about an algebraic enrichment of the well-known correspondence between symmetrisable Cartan matrices and graphs with automorphisms to the infinite case under some extra efforts.

\subsection{Main results and strategies}

Loosely speaking, compared to a group, the combinatorial structure of an EI category $\C$ has two more patterns:
\begin{itemize}
    \item factorisation properties of non-invertible morphisms between pairs of non-isomorphic objects,
    \item the biset structures of non-invertible morphisms between pairs $(c,d)$ of non-isomorphic objects  under the actions of the endomorphism groups $G_c$ and $G_d$.
\end{itemize}
To bound the global dimension of the left module category of $\C$, we need to impose certain extra conditions on these two patterns. For the first one, we introduce the finite factorisation property FFP (see Definition \ref{infinite-compositions}) and the unique factorisation property UFP (see Definition \ref{UFP}); for the second one, we introduce Conditions $(\mathrm{A}_N)$, $(\mathrm{B}_N)$, and $(\mathrm{BP})$, cf.\ Subsection \ref{ANBN}. These conditions are formulated for arbitrary directed $k$-linear categories $\C$. They are most general, but slightly artificial, and can be replaced by two much more transparent conditions (A) and (B), cf.\ \ref{k-linear categories}, in the case that $N=1$, which in turn can be translated into combinatorial conditions $(\mathrm{A}_d)$ and $(\mathrm{B}_d)$, cf.\ Subsection \ref{discrete ei categories}, if $\C$ is a discrete EI category. Note that for $\C$ finite, the question whether $k\C$ is hereditary is fully decided by the first pattern, at least if $k$ is a field \cite{li}, while the second pattern is not visible through the eyes of algebra since the group rings $kG_c$ have global dimension either $0$ or $\infty$.
Contrarily, the second pattern plays an important role in the infinite case.

The following theorem for EI categories is a weaker version of Theorem \ref{sufficiency-ANBN}, which bounds the left global dimension of $k$-linear categories satisfying certain conditions.

\begin{theoA} \label{bound gldim}
Let $N$ be a positive integer, $k$ a commutative ring and $\C$ a discrete EI category with the FFP and the UFP. Suppose that the global dimension of the group ring $kG_c$ is bounded by $N$ for every object $c$, and that $k\C$ satisfies Conditions $(\mathrm{A}_N)$, $(\mathrm{B}_N)$ and $(\mathrm{BP})$. Then the global dimension of the category of left $k\C$-modules is bounded by $N$ as well.
\end{theoA}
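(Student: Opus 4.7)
The strategy is to induct on the directed structure of $\C$ to construct, for every left $k\C$-module, a projective resolution of length at most $N$. Since $\C$ is a directed EI category with the FFP, the isomorphism classes $[c]$ form a poset under $[c]<[d]$ iff there is a non-invertible morphism $c\to d$, and the FFP supplies the well-foundedness needed for induction. First I would reduce to bounding the projective dimension of the \emph{atomic} modules $S_c^W$, where $W$ is a $kG_c$-module viewed as a $k\C$-module supported only at $[c]$: arbitrary $k\C$-modules admit a (possibly transfinite) filtration by support whose subquotients are atomic, so a uniform bound of $N$ on their projective dimensions gives the same bound on the global dimension.

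For a fixed atomic module $S_c^W$, I would use the standard surjection
\[
  P_c^W := k\Hom_{\C}(c,-) \otimes_{kG_c} W \twoheadrightarrow S_c^W,
\]
whose kernel $K_c^W$ is supported strictly above $[c]$. The UFP should provide a controlled combinatorial presentation of the non-invertible $\Hom$-sets $k\Hom_{\C}(c,d)$, which yields an explicit description of $K_c^W$ in terms of induced modules $S_d^{W_d}$ for $[d]>[c]$, where each $W_d$ is a $kG_d$-module built from $W$ via the biset structure of the non-invertible morphisms $c\to d$. Conditions $(\mathrm{A}_N)$, $(\mathrm{B}_N)$, and $(\mathrm{BP})$ are precisely designed to control this: $(\mathrm{BP})$ ensures the relevant $\Hom$-bisets are projective on the appropriate side so that tensoring with them preserves projective resolutions, while $(\mathrm{A}_N)$ and $(\mathrm{B}_N)$ track how the group-theoretic bounds $\gldim kG_d \leq N$ descend through this biset-tensor construction.

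The base case is an object $c$ at which no non-invertible morphism originates: then $K_c^W=0$ and $P_c^W=S_c^W$, so $\mathrm{pd}_{k\C}(S_c^W)=\mathrm{pd}_{kG_c}(W)\leq N$ by hypothesis. For general $c$, splicing an inductively obtained resolution of $K_c^W$ of length at most $N-1$ with the short exact sequence $0\to K_c^W\to P_c^W\to S_c^W\to 0$ produces a resolution of $S_c^W$ of length at most $N$, closing the induction.

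The hard part will be verifying that $(\mathrm{A}_N)$, $(\mathrm{B}_N)$, and $(\mathrm{BP})$ truly yield $\mathrm{pd}_{k\C}(K_c^W)\leq N-1$. Conceptually, each atomic summand $S_d^{W_d}$ of $K_c^W$ must come with a $kG_d$-coefficient $W_d$ whose group-theoretic projective dimension is strictly less than $N$, compensating for the shift introduced by splicing. The three conditions are the abstract formulation of this compatibility between the factorisation combinatorics (FFP, UFP) and the representation theory of the endomorphism groups $G_d$; unpacking their definitions and checking them summand by summand is expected to be the most technically involved step, and will likely require refined truncation arguments together with spectral sequences for the filtration by support.
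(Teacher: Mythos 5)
Your outline is essentially the route the paper takes in its \emph{alternative} proof (Section \ref{suff2}), not the main one, and it runs into exactly the obstruction that forces the paper to add an extra hypothesis there. The central gap is your claim that ``the FFP supplies the well-foundedness needed for induction.'' It does not. The FFP only says each individual morphism factors into finitely many unfactorisables; it says nothing about chains in the object poset. The poset $\mathbb{N}$ (or $\mathbb{Z}$) viewed as an EI category with trivial groups has the FFP and the UFP, yet has no maximal objects, so your base case (``an object at which no non-invertible morphism originates'') never occurs and the downward induction never starts; dually, infinite descending chains obstruct the decomposition of kernels into induced pieces. This is precisely why the paper's combinatorial argument assumes the absence of left-infinite chains (Definition \ref{left-infinite chains}), proves surjectivity of the comparison map $F$ only under that hypothesis via a Tychonoff compactness argument (Proposition \ref{F-surjective-tychonoff}, Lemma \ref{tychonoff}), and lists the removal of this restriction as an open question. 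Two further steps do not go through as stated: (i) the reduction to atomic modules via a ``filtration by support'' is not justified — because morphisms point upward, the natural filtration with atomic subquotients $S_c^{M(c)}$ is a \emph{decreasing} transfinite filtration, and bounding projective dimension along such a filtration is not an instance of Auslander's lemma; (ii) Conditions $(\mathrm{A}_N)$ and $(\mathrm{B}_N)$ are hypotheses about $(N-1)$-st kernels only, so they simply cannot be applied to the arbitrary coefficient modules $W$ and $W_d$ appearing in your atomic modules. Both the paper's proofs avoid this by reducing first to showing that every $N$-th kernel $K$ is projective; then $K(x)$ is an $N$-th kernel over $R_x$ (using (BP)), hence projective, which is what makes $(\mathrm{B}_N)$ applicable (Lemma \ref{split}).

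For contrast, the paper's actual proof of Theorem \ref{bound gldim} (Theorem \ref{sufficiency-ANBN}) needs no induction over objects and no chain condition. It uses the Cuntz--Quillen short exact sequence of bimodules $0\to \Omega_R^1\C \to \C\otimes_R\C\to\C\to 0$ over $R=\bigoplus_c R_c$, tensors it with an $(N-1)$-st kernel $L$, identifies $\Omega_R^1\C\cong \C\otimes_R U\otimes_R\C$ (this is where the UFP enters, via the free tensor category structure), and then uses $(\mathrm{A}_N)$ to show $\C\otimes_R L$ has projective dimension $\le 1$ and $(\mathrm{B}_N)$, (BP) to show $\C\otimes_R(U\otimes_R L)$ is projective, concluding by Schanuel's lemma. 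If you want to pursue your combinatorial strategy, you should add the hypothesis that the object poset has no left-infinite chains and follow the structure of Section \ref{suff2}; as stated, the proposal does not prove the theorem in the generality claimed.
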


We prove Theorem \ref{bound gldim} using a result of Cuntz and Quillen \cite{cuntz-quillen} on modules of relative differential forms, adapted for rings with approximate unit. Without Conditions $(\mathrm{A}_N)$, $(\mathrm{B}_N)$ and $(\mathrm{BP})$ or  their twin conditions, this method would yield an upper bound of $N+1$ for the global dimension \cite[Thm.~2.2.11]{hazewinkel-gubareni-kirichenko}. Thus this paper is all about decreasing this bound by $1$, which makes of course a great difference in applications, in particular for small values of $N$.

We give an alternative, combinatorial proof of Theorem \ref{bound gldim} in Section \ref{suff2}. This proof needs an additional condition, but we get the following additional statement as an upshot:

\begin{theoA} \label{intro-sum-of-rep}
In the situation of Theorem \ref{bound gldim}, suppose additionally that the object poset of $\C$ has no left-infinite chains (see Def. \ref{left-infinite chains}). Then every projective left $k\C$-module is isomorphic to a direct sum of induced modules of the form $k\C\otimes_{kG_c} P_c$, where $c\in \mathrm{Ob}(\C)$ and $P_c$ is a projective left $kG_c$-module.
\end{theoA}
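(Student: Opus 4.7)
The plan is to perform well-founded induction on the object poset of $\C$, which is well-founded for descending chains by the no-left-infinite-chains hypothesis. Given a projective left $k\C$-module $P$, I will repeatedly split off induced summands $k\C \otimes_{kG_c} P(c)$ at minimal objects $c$ of the (successively shrinking) support of $P$, and use Zorn's lemma to assemble the splittings across all such $c$.

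For the local step, fix $c$ minimal in $\mathrm{supp}(P)$. Evaluation $\mathrm{ev}_c$ at $c$ is exact, and it sends a free generator $k\C(c_i,-)$ to the $kG_c$-module $k\C(c_i,c)$. Conditions $(\mathrm{A}_N)$, $(\mathrm{B}_N)$, and $(\mathrm{BP})$, together with UFP and FFP, are designed to make each $k\C(c_i,c)$ a projective $kG_c$-module, so $\mathrm{ev}_c$ carries projective $k\C$-modules to projective $kG_c$-modules; in particular $P(c)$ is projective over $kG_c$. The identity on $P(c)$ then corresponds under the induction--restriction adjunction to a natural map
\[
\varphi_c \colon k\C \otimes_{kG_c} P(c) \longrightarrow P,
\]
whose evaluation at $c$ is the identity. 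I would then argue that $\varphi_c$ is a split monomorphism of $k\C$-modules, yielding a decomposition $P \cong (k\C \otimes_{kG_c} P(c)) \oplus P'$ with $P'$ projective and $c \notin \mathrm{supp}(P')$.

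To conclude, apply Zorn's lemma to families of pairwise disjoint induced summands of $P$ of the form produced by the local step, obtaining a maximal such family and a decomposition $P = P_{\mathrm{ind}} \oplus P''$ with $P_{\mathrm{ind}}$ a direct sum of induced modules. If $P''$ were nonzero, its support would have a minimal element by the well-foundedness hypothesis; the local step applied to $P''$ would then produce an induced summand extending the family, contradicting maximality. Hence $P'' = 0$, giving the desired decomposition.

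The principal obstacle lies in the local step: showing that $\varphi_c$ is split injective as a $k\C$-module map, not merely at $c$ but globally. The evaluation of the domain at any $d$ with $c < d$ is $k\C(c,d) \otimes_{kG_c} P(c)$, and the image must be realised as a $kG_d$-summand of $P(d)$ compatibly with all morphisms of $\C$. Conditions $(\mathrm{A}_N)$, $(\mathrm{B}_N)$, and $(\mathrm{BP})$ seem tailor-made to control exactly this; I expect them to yield, for each $d$, a canonical $kG_d$-decomposition of $P(d)$ indexed by the minimal objects $c' \leq d$ of $\mathrm{supp}(P)$, from which the coherent splitting of $\varphi_c$ follows.
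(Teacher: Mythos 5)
Your proposal has two genuine gaps, and both occur at exactly the points where the paper has to work hardest. First, the local step: you correctly observe (via Condition (BP), cf.\ Lemma~\ref{objectwise-projective}) that $P(c)$ is projective over $kG_c$ and that the adjunction gives a map $\varphi_c\colon k\C\otimes_{kG_c}P(c)\to P$, and injectivity of $\varphi_c$ can indeed be extracted from Condition $(\mathrm{A}_N)$ as in Lemma~\ref{F-injective} (note that for $c$ minimal in the support one has $B_c(P)=0$, so $P(c)=S_c(P)$). But the assertion that $\varphi_c$ is \emph{split} as a map of $k\C$-modules is never proved, and you yourself flag it as the principal obstacle. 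What the conditions $(\mathrm{B}_N)$ and (BP) actually deliver is an \emph{objectwise} splitting of $B_d(P)\subseteq P(d)$ over each $R_d$ (Lemma~\ref{split}); these retractions are chosen independently at each object and are not compatible with the structure maps of $\C$, so they do not assemble into a retraction of $k\C$-modules. The paper sidesteps this entirely: it never constructs a retraction of any single $\varphi_c$, but instead forms the global comparison map $F\colon\bigoplus_x k\C\otimes_{R_x}S_x(P)\to P$ from the objectwise splittings and proves $F$ is injective \emph{and surjective}, so that it is an isomorphism outright.

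Second, the Zorn assembly fails as stated. If a ``family'' is required to consist of induced summands whose sum is a direct summand of $P$, then a chain of such families need not have an upper bound: an increasing union of direct summands of a module need not be a direct summand, even when $P$ is projective and the base rings are hereditary --- this is precisely the phenomenon recorded in Remark~\ref{splitting-off-infinite-direct-sum} (with the reference \cite{mo}). If instead you only require the sum to be direct internally, chains do have upper bounds, but then maximality does not give the decomposition $P=P_{\mathrm{ind}}\oplus P''$ that your contradiction argument needs. The hypothesis of no left-infinite chains enters the paper's proof not through a well-founded induction of this kind, but through a K\H{o}nig/Tychonoff compactness argument (Lemma~\ref{tychonoff}) showing that the colimit obstructing surjectivity of $F$ vanishes: an element surviving to every finite stage would produce a left-infinite chain. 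To repair your argument you would essentially have to reproduce that compactness step, at which point you might as well prove bijectivity of the single global map $F$ as the paper does.
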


This is a result of Lück \cite[Cor.~9.40]{trafo} in  the case of finite type projectives (with $\C$ an arbitrary EI category), but a non-trivial new result in the above generality.

For $N = 1$, we can prove the converse implication of Theorem \ref{bound gldim}, significantly generalising \cite[Theorem 1.2]{li}.

\begin{theoA} \label{hereditary-minimal} Let $k$ be a semisimple commutative ring and $\C$ a discrete EI category with the finite factorisation property FFP.
The category of left $\C$-modules is hereditary if and only if the following conditions are satisfied:
\begin{enumerate}
\item the group ring $kG_c$ is hereditary for every object $c$ in $\C$,
\item $\C$ has the unique factorisation property UFP,
\item Conditions $(\mathrm{A}_d)$ and $(\mathrm{B}_d)$ hold.
\end{enumerate}
\end{theoA}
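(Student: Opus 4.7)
The sufficiency direction is an immediate application of Theorem~\ref{bound gldim} with $N=1$. Hypothesis (1) gives $\gldim kG_c \leq 1$ for every object $c$; the UFP and FFP are assumed; and in the discrete EI setting the conditions $(\mathrm{A}_d)$ and $(\mathrm{B}_d)$ are exactly the combinatorial form of $(\mathrm{A}_1)$, $(\mathrm{B}_1)$, and $(\mathrm{BP})$ (cf.\ \ref{k-linear categories} and Subsection~\ref{discrete ei categories}), so Theorem~\ref{bound gldim} yields $\gldim k\C \leq 1$.

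For the converse, my plan is to assume $k\C$ is hereditary and prove each of (1), (2), (3) by contraposition. For condition (1) I would fix an object $c$ and exploit the adjunction between induction $I_c = k\C e_c \otimes_{kG_c}(-)$ and restriction $R_c = e_c(-)\colon k\C\Mod \to kG_c\Mod$, noting that $R_c$ is exact and $R_c \circ I_c \cong \mathrm{id}$. Given any $kG_c$-module $N$, a projective $k\C$-resolution of length $\leq 1$ of $I_c(N)$ restricts via $R_c$ to a $kG_c$-resolution of $N$ whose terms are direct summands of sums of the modules $k\Hom_\C(d,c)$. The EI property, FFP, and semisimplicity of $k$ should let me see that each such $k\Hom_\C(d,c)$ is $kG_c$-projective, so hereditarity of $k\C$ descends to $kG_c$.

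Conditions (2) and (3) I would tackle by generalising Li's approach in \cite[Theorem 1.2]{li}. If the UFP fails, there is a non-invertible morphism $\alpha\colon a \to c$ with two essentially distinct factorisations $\alpha = \beta_1\gamma_1 = \beta_2\gamma_2$; the plan is to use these competing factorisations to construct a non-trivial element in $\Ext^2_{k\C}$ between carefully chosen simple $k\C$-modules, contradicting hereditarity. Similarly, if $(\mathrm{A}_d)$ or $(\mathrm{B}_d)$ fails, the obstruction arises as a non-projective summand of some $k\Hom_\C(d,c)$ viewed as a $(kG_c,kG_d)$-biset, which I would promote to an explicit $k\C$-module of projective dimension $\geq 2$.

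The hardest part will be executing the $\Ext^2$-arguments for (2) and (3) in the infinite setting. In the finite case Li works with minimal projective covers and the Jacobson radical of the finite-dimensional algebra $k\C$; when $\C$ is infinite these tools are not directly available, since $k\C$ is neither semiprimary nor finite-dimensional. To compensate I would use the FFP, which ensures that each morphism admits only finitely many factorisations, to locate every obstruction inside a finite full subcategory of $\C$ where Li's finite-dimensional argument applies, and then verify that the resulting $\Ext^2$-class persists on passage back to the ambient $k\C$. Organising these local obstructions into a single global module of projective dimension two over $k\C$ is where I expect the bulk of the technical work to lie.
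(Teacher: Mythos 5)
Your sufficiency direction is essentially the paper's: Theorem~\ref{bound gldim} (i.e.\ Theorem~\ref{sufficiency-ANBN}) applies with $N=1$ once one knows that the UFP makes the $k$-linearisation of $\C$ a free tensor category (Remark~\ref{UFP-vs-free-tensor}) and that $(\mathrm{A}_d)$ and $(\mathrm{B}_d)$ translate into (A) and (B). Do note that this translation is itself a real step -- part (b) of Lemma~\ref{modules-over-group-ring}, equating flatness of $k(G/H)$ with local $k^\times$-finiteness of $H$, is not formal -- but you point to the right place.

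The necessity direction, however, has two genuine gaps. First, for (1) you claim that ``the EI property, FFP, and semisimplicity of $k$'' make each $k\Hom_\C(d,c)$ projective over $kG_c$. This is false: $k\Hom_\C(d,c)$ is a direct sum of modules $k(G_c/H)$ over the orbit stabilisers $H$, and these are $kG_c$-projective if and only if the $H$ are $k^\times$-finite (Lemma~\ref{modules-over-group-ring}(a)), which can easily fail for an EI category with the FFP over a field. Projectivity of $P(c)$ for $P$ projective is a \emph{consequence} of hereditarity and needs an argument: the paper first passes to the full subcategory of objects $\geqslant c$ (hereditary because its modules are the $\C$-modules supported there), so that $c$ becomes minimal, and then uses that restriction along an ideal preserves projectives because its right adjoint, extension by zero, is exact (Lemmas~\ref{restriction preserves projectives} and~\ref{BP holds}). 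Your restriction at a non-minimal $c$ does not preserve projectives without exactly this input. Second, your plan for (2) and (3) -- localise the obstruction to a finite full subcategory, run Li's finite-dimensional $\Ext^2$-argument there, and transport the class back -- does not get off the ground: hereditarity of $k\C$ is \emph{not} known to pass to arbitrary finite full subcategories (the paper establishes it only for ideals and, resting on Lemma~\ref{BP holds}, for two-object full subcategories in Lemma~\ref{subcategories of two objects are hereditary}), and $\Ext$-classes neither restrict to nor extend from full subcategories in general. The paper's proof of the UFP (Proposition~\ref{UFP-uniqueness}) avoids $\Ext^2$ altogether: it shows that the cyclic submodules $k\C\alpha$ and $k\C\beta$ of the projective $k\C e_x$ are either nested or intersect trivially (Lemma~\ref{intersection of projectives}, using that their sum is projective), combines this with $P\cong \C\otimes_{R_x}P(x)$ for projectives generated at $x$ (Lemma~\ref{describe projective modules}), and inducts on the length of the morphism; conditions (A) and (B) are obtained by direct manipulation of projective resolutions over two-object subcategories (Lemmas~\ref{condition A holds} and~\ref{condition B holds}) and only then converted into the combinatorial conditions $(\mathrm{A}_d)$ and $(\mathrm{B}_d)$. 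You would need substitutes for each of these steps.
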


The proof of the necessity direction is based on the techniques introduced in \cite{li}, which we extend to the more general situation considered in this paper. Together with the classification of hereditary groups rings  by Dicks \cite{dicks}, which we cite in Proposition \ref{dicks}, and which can be seen as a precursor of our result, specifically the case that $\mathrm{Ob}(\C)$ is a singleton, the above theorem characterises a wide class of EI categories with hereditary left module category in terms of transparent and easy to check combinatorial conditions.

We shall also point out that there exist EI categories whose left module category is hereditary, but the right module category is not. This can easily be deduced by observing that the unique factorisation property UFP is symmetric (that is, $\C$ has the UFP if and only if its opposite category $\C^{\mathrm{op}}$ has the UFP as well), and Condition $(\mathrm{A}_d)$ is symmetric as well, but  $(\mathrm{B}_d)$ is not; see Example \ref{non-symmetric}. Their category algebras are thus  rings with approximate unit whose left and right global dimension differ, and which are consequently not approximately Noetherian, as discussed in Remark \ref{approximately Noetherian}.

By considering the opposite category, the above theorem immediately gives us a characterisation of EI categories with the FFP such that both module categories are hereditary.

\begin{corA} \label{two-sided hereditary} Let $k$ and $\C$ be as in Theorem \ref{hereditary-minimal}. Then both the categories of left and right $\C$-modules are hereditary if and only if the following conditions are satisfied:
\begin{enumerate}
\item the group ring $kG_c$ is hereditary for every object $c$ in $\C$,
\item $\C$ has the unique factorisation property UFP,
\item all biset stabilisers (cf. Subsection \ref{discrete ei categories}) are $k^\times$-finite.
\end{enumerate}
\end{corA}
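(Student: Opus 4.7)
The plan is to reduce Corollary \ref{two-sided hereditary} to Theorem \ref{hereditary-minimal} by applying the latter simultaneously to $\C$ and to its opposite $\C^{\mathrm{op}}$. The key observation is that right $k\C$-modules are the same as left $k\C^{\mathrm{op}}$-modules, so the hypothesis that both module categories of $\C$ are hereditary translates to the assertion that the left module categories of both $\C$ and $\C^{\mathrm{op}}$ are hereditary. The proof is therefore essentially a symmetrisation exercise.

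First I would verify that the standing hypotheses and the symmetric items of Theorem \ref{hereditary-minimal} are invariant under passage to the opposite. The FFP is self-dual, since factorisations of a morphism $f$ in $\C$ correspond bijectively to factorisations of $f^{\mathrm{op}}$ in $\C^{\mathrm{op}}$. Condition (1) is self-dual because the endomorphism group $G_c$ in $\C^{\mathrm{op}}$ is the opposite group of the corresponding endomorphism group in $\C$, and a group ring is hereditary iff the opposite group ring is. The paper has already noted that UFP (condition (2)) is self-dual and that $(\mathrm{A}_d)$ is self-dual, while $(\mathrm{B}_d)$ is not.

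Applying Theorem \ref{hereditary-minimal} to both $\C$ and $\C^{\mathrm{op}}$ and taking the intersection of the four bulleted hypotheses, I obtain that both module categories are hereditary if and only if (1), (2), $(\mathrm{A}_d)$, together with $(\mathrm{B}_d)$ for $\C$ and $(\mathrm{B}_d)$ for $\C^{\mathrm{op}}$, all hold. The remaining task is to identify this conjunction with condition (3) of the corollary, namely that every biset stabiliser is $k^\times$-finite. This is the main obstacle, and the step where the definitions in Subsection \ref{discrete ei categories} must be carefully unpacked: for a morphism $\alpha\colon c\to d$ and the natural $(G_d,G_c)$-action on $\Hom_{\C}(c,d)$, condition $(\mathrm{A}_d)$ controls the diagonal (biset) stabiliser up to units of $k$, while $(\mathrm{B}_d)$ for $\C$ controls a one-sided stabiliser (say the right $G_c$-stabiliser of $\alpha$) and $(\mathrm{B}_d)$ for $\C^{\mathrm{op}}$ controls the other (the left $G_d$-stabiliser). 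Once the equivalence of these three pieces with the $k^\times$-finiteness of the full biset stabiliser is verified, the corollary follows immediately.

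I expect the first two paragraphs above to be essentially formal, so that the entire weight of the proof rests on the final identification. Since the author has already emphasised that $(\mathrm{A}_d)$ is symmetric and that $(\mathrm{B}_d)$ is not, the dichotomy suggests exactly the decomposition sketched: the symmetric condition corresponds to the diagonal part of the biset stabiliser, and the two instances of the asymmetric condition correspond to its two projections onto $G_d$ and $G_c$. I would expect no further technical input to be required beyond a careful reading of the definitions.
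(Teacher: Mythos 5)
Your proposal is correct and follows exactly the paper's (unwritten) argument: apply Theorem~\ref{hereditary-minimal} to $\C$ and to $\C^{\mathrm{op}}$, note that the FFP, hereditarity of the $kG_c$, the UFP and $(\mathrm{A}_d)$ are all self-dual, and observe that $(\mathrm{B}_d)$ for $\C$ and for $\C^{\mathrm{op}}$ control the two projections $\pr_1(H_i)\subseteq G_d$ and $\pr_2(H_i)\subseteq G_c^{\mathrm{op}}$ of each biset stabiliser (note these are projections of $H_i$, not the stabilisers of the one-sided actions, as your final sentence correctly states). The identification you defer is elementary: $H_i$ embeds into $\pr_1(H_i)\times \pr_2(H_i)$ and surjects onto each factor, so by Lagrange $H_i$ is $k^\times$-finite if and only if both projections are, and in that case $(\mathrm{A}_d)$ holds automatically.
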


\subsection{Applications}

As previously mentioned, we are interested in three concrete types of EI categories: transporter categories, orbit categories and Quillen categories (including fusion systems as a special case). Applying Theorem \ref{hereditary-minimal}, we work out for these categories when their left module categories over a field are hereditary.

A transporter category $\mathcal{P} \rtimes G$ is the Grothendick construction of a poset $\mathcal{P}$ equipped with the action of a group $G$. We translate the conditions FFP and UFP occurring in Theorem \ref{hereditary-minimal} to the two combinatorial conditions (ESC) and (USC) on the poset $\mathcal{P}$. It turns out that these, together with two conditions on the stabilisers, completely characterise transporter categories with hereditary left module categories over a field. For details, please refer to Subsection \ref{transporter categories} and Theorem \ref{transporter-hereditary}.

Given a discrete group $G$ and a family $\mathcal{F}$ of subgroups closed under conjugations and taking subgroups, one can define the orbit category and the Quillen category.

We find the correct group-theoretic conditions to be imposed on $G$ and the members of $\mathcal{F}$ such that the orbit category has a hereditary left module category over a field $k$ in Subsection \ref{orbit categories}:

\begin{theoA} \label{orbit-hereditary} Let $k$ be a field, $G$ a discrete group and $\F$ a family of finite subgroups. Then $k\mathrm{Or}(G,\F)$ is hereditary if and only if
\begin{itemize}
\item $G$ is either countable locally $k^\times$-finite or the fundamental group of a connected graph of $k^\times$-finite groups,
\item all members of $\F$ are cyclic of prime power order, invertible in $k$, and their Weyl groups are $k^\times$-finite (except possibly for the Weyl group of $\{1\}$).
\end{itemize}
\end{theoA}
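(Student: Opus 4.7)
The plan is to apply Theorem \ref{hereditary-minimal} to $\C = \mathrm{Or}(G,\F)$, translating each of its four hypotheses (hereditarity of $kG_c$, FFP, UFP, and $(\mathrm{A}_d)$, $(\mathrm{B}_d)$) into the stated group-theoretic conditions on $G$ and $\F$, and invoking Dicks's classification (Proposition \ref{dicks}) for the one Weyl group to which no $k^\times$-finiteness hypothesis is attached.

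First I would record the basic structure. Objects of $\mathrm{Or}(G,\F)$ are the coset spaces $G/H$ for $H\in\F$; morphisms $G/H\to G/K$ identify with $(G/K)^H = \{gK : g^{-1}Hg\subseteq K\}$, composed by multiplication. Since every $H\in\F$ is finite, an inclusion $g^{-1}Hg\subseteq H$ is automatically an equality, so $\mathrm{Or}(G,\F)$ is an EI category with $G_{G/H}=W_G(H)$. FFP then falls out of the finiteness of the members of $\F$: any factorisation $G/H\to G/L\to G/K$ is controlled by an intermediate subgroup sitting between a conjugate of $H$ and a subgroup of $K$, of which there are only finitely many up to conjugacy because $K$ is finite.

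The central combinatorial step is UFP. Every non-invertible morphism $G/H\to G/K$ decomposes as a ``projection'' $G/H\to G/L$ induced by a genuine inclusion $H\leq L\leq K$ (after conjugation), followed by a translation determined by an element of $(G/K)^L$. Iterating the projection part, one sees that UFP is equivalent to the subgroup lattice of each $H\in\F$ being a chain; by a classical theorem this is equivalent to $H$ being cyclic of prime power order. The biset conditions $(\mathrm{A}_d)$ and $(\mathrm{B}_d)$ then constrain the $W_G(H)$-$W_G(K)$-action on $(G/K)^H$; a direct computation identifies the relevant stabilisers in terms of images of subgroups of $H$ together with pieces of the Weyl groups, so that the required $k^\times$-finiteness of the biset stabilisers translates into $|H|\in k^\times$ for every $H\in\F$ and into $W_G(H)$ being $k^\times$-finite whenever $H\neq\{1\}$.

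It remains to verify the group-ring hypothesis. For any $H\neq\{1\}$ in $\F$, the previous paragraph forces $W_G(H)$ to be $k^\times$-finite, so $kW_G(H)$ is semisimple by Maschke's theorem, in particular hereditary. The only remaining case is $H=\{1\}$, where $W_G(\{1\})=G$; Proposition \ref{dicks} characterises hereditarity of $kG$ by precisely the first bullet of the statement. Collecting everything, the four hypotheses of Theorem \ref{hereditary-minimal} match exactly the two bullets; conversely, each bullet immediately supplies the corresponding hypothesis. The main obstacle will be the careful bookkeeping of the biset stabilisers arising in $(\mathrm{A}_d)$ and $(\mathrm{B}_d)$, since the interplay between $W_G(H)$, $W_G(K)$ and the subgroup lattice of $K$ must be disentangled in order to isolate the conditions ``$|H|$ invertible in $k$'' and ``$W_G(H)$ is $k^\times$-finite'' as separate, clean requirements.
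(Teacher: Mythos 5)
Your proposal is correct and follows essentially the same route as the paper: reduce to Theorem~\ref{hereditary-minimal}, note that $\mathrm{Or}(G,\F)$ is EI with the FFP and automorphism groups $W_G(K)$, identify the UFP with all members of $\F$ being cyclic of prime power order (the paper cites \cite[Prop.~6.5.5]{SW} for this), extract ``$|K|$ invertible'' and ``$W_G(K)$ $k^\times$-finite'' from $(\mathrm{A}_d)$ and $(\mathrm{B}_d)$ applied to the biset $\mathrm{Hom}(G/\{1\},G/K)$ whose stabiliser is a graph subgroup isomorphic to $N_G(K)$ projecting onto $W_G(K)$, and invoke Dicks for $G=W_G(\{1\})$ with Maschke handling the remaining finite Weyl groups. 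The only point to make sure of in the write-up is the sufficiency check of $(\mathrm{A}_d)$ and $(\mathrm{B}_d)$ for \emph{all} pairs $G/L<G/K$ (not just $L=\{1\}$), which is easy since for $L\neq\{1\}$ the stabilisers sit inside the $k^\times$-finite group $W_G(K)\times W_G(L)^{\mathrm{op}}$.
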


This result generalises \cite[Thm.~D]{SW}. The discussion of Quillen categories, which is quite similar, is treated in Subsection \ref{quillen categories}, leading to Theorem \ref{quillen-hereditary}. Furthermore, for the above three concrete types of categories, we prove that their left module categories are hereditary if and only if so are the right module categories.

\subsection{Open questions}

We formulate some interesting open questions which follow naturally from our results. The first two refer to the goal to extend the 'if and only if' characterisation of Theorem \ref{hereditary-minimal} to greater generality.

\begin{question}
Are the conditions of Theorem \ref{bound gldim} necessary?
\end{question}

In detail, we mean the following: Suppose that the EI category $\C$ satisfies the FFP. Suppose that the global dimensions of the endomorphism algebras $R_x$ are bounded by some $N\ge 1$, and that $N$ is the least such upper bound. If $\C$ has global dimension $N$ as well, does $\C$ necessarily satisfy the UFP and the algebraic conditions $(\mathrm{A}_N)$, $(\mathrm{B}_N)$ and (BP)? Theorem \ref{hereditary-minimal} gives a positive answer to this question in the case $N=1$.

\begin{question}
Does there exist a complete characterisation of left hereditary directed $k$-linear categories $\C$?
\end{question}

Here we thus assume $N=1$, but $\C$ is not necessarily the linearisation of a discrete EI category. In this case, one can prove that Conditions (A) and (B) hold, see Lemmas~\ref{condition A holds} and~\ref{condition B holds}. But we could prove that $\C$ is necessarily the free tensor category over some tensor quiver (which is the algebraic analogue of the UFP) only under the (strong) assumption that $\C$ is isomorphic to its associated graded with respect to a certain ideal filtration (this condition holds automatically for the $k$-linearisation of a free EI category), see Remark \ref{associated graded}.

\begin{question}
The alternative approach described in Section \ref{suff2} requires an extra combinatorial condition: the non-existence of left-infinite chains. Can this restriction be removed?
\end{question}

In this paper we mostly consider EI categories satisfying the finite factorisation property FFP, cf.\ Definition~\ref{infinite-compositions}. But there are many interesting examples such that this assumption fails. We thus ask:

\begin{question}
To what extent can the main results in this paper be generalised to EI categories without the FFP?
\end{question}

At the moment we only have some necessary combinatorial conditions, see Lemma~\ref{Theta-totally-ordered} and Proposition~\ref{Theta-well-ordered}, and an additional algebraic condition of which we don't even know whether it can occur in any example, see Remark~\ref{splitting-off-infinite-direct-sum}. A status report is given in Section \ref{without-FFP}.

\subsection{Organisation}

The paper is organised as follows.
\begin{itemize}
    \item Section \ref{preliminaries} recalls  some preliminary knowledge on EI categories and $k$-linear categories as well as their representation theory, and introduces in detail the conditions $(\mathrm{A}_d)$, $(\mathrm{B}_d)$, (A) and (B).
    \item In Section \ref{necessity} we adapt the technique used in \cite{li} to prove the necessity direction of Theorem \ref{hereditary-minimal}.
    \item In Section \ref{sufficiency}, we prove Theorem \ref{bound gldim} and the sufficiency direction of Theorem \ref{hereditary-minimal} by adapting the machinery from \cite{cuntz-quillen} to rings with approximate unit.
    \item Section \ref{applications} discusses the applications to transporter categories, orbit categories, and Quillen categories.
    \item Section \ref{suff2} explains the alternative way to prove Theorem \ref{bound gldim}, additionally leading to Theorem \ref{intro-sum-of-rep}.
    \item Section \ref{without-FFP} treats categories without the FFP, discussing some examples and deriving a necessary condition which can be seen as a substitute for the UFP.
    \item The paper has an appendix describing combinatorial and geometric characterisations of the finite normaliser condition occurring in Theorem~\ref{orbit-hereditary}, in terms of group actions on trees.
\end{itemize}

\section{Preliminaries}
\label{preliminaries}

In this section we describe some preliminary knowledge on the representation theory of categories. Throughout this paper, let $k$ be a commutative unital ring, and let $\C$ be a small category. We often assume in proofs that $\C$ is skeletal, but formulate our results without this assumption. We say that $\C$ is \emph{discrete} when it is a usual category, and $\C$ is \emph{$k$-linear} if it is enriched in $k \Mod$, the category of left $k$-Modules. A set $X$ is called \textit{$k^\times$-finite} if its cardinality is finite and invertible in $k$.

\subsection{Discrete EI categories} \label{discrete ei categories}

In this subsection, let $\C$ be a discrete EI category, which can be thought of as a combination of a group and a preordered set. Explicitly, for each object $c$ in $\C$, the morphism set $\C(c,c)$ is a group which we denote by $G_c$. We can define a preorder $\leqslant$ on the set of objects by letting $c \leqslant d$ if $\C(c, d)$ is nonempty. This preorder induces a partial order $\leqslant$ on the set of isomorphism classes of objects.

Given a pair of objects $c$ and $d$, the morphism set $\C(c,d)$ is a $(G_d,G_c)$-biset, {i.\ e.} a left $(G_d\times G_c^{\mathrm{op}})$-set. As such, it is a disjoint union of transitive bisets
\[G_d\times G_c^{\mathrm{op}}/H_i\]
where $H_i\subseteq G_d\times G_c^{\mathrm{op}}$ is a subgroup. We call all subgroups $H_i$ occuring like this \emph{biset stabilisers in $\C(c,d)$}, and will study the following conditions on the biset stabilisers.

\begin{itemize}
    \item[$(\mathrm{A}_d)$] For all $c < d$, all biset stabilisers in $\C(c,d)$ are locally $k^\times$-finite, in the sense that all their finitely generated subgroups are $k^\times$-finite.
    \item[$(\mathrm{B}_d)$] For all $c < d$, and any biset stabiliser $H_i$ in $\C(c,d)$, $\pr_1(H_i)$ is $k^\times$-finite. Here $\pr_1$ denotes the projection $G_d\times G_c^{\mathrm{op}}\rightarrow G_d$.
\end{itemize}

As mentioned in the introduction, the first condition is symmetric, but the second one is not.

\begin{exam}\label{non-symmetric}
Let $G$ be an infinite, locally $k^{\times}$-finite group, for instance $k=\mathbb{Q}$ and $G=\mathbb{Q}/\mathbb{Z}$, and let $\C$ be a category with two objects $c$ and $d$ such that $G_c = G = G_d$, and $\C(c, d)$ as a biset is generated by an element $\alpha$ such that $G_d$ acts on $\C(c, d)$ freely and every element in $G_c$ fixes every morphism in $\C(c, d)$, and $\C(c, d) = \emptyset$. Then we have $H_i=\{1\}\times G_c^{\mathrm{op}}$ for the only biset stabiliser, and $\C$ satisfies $(\mathrm{A}_d)$ and $(\mathrm{B}_d)$, whereas the opposite category only satisfies $(\mathrm{A}_d)$.
\end{exam}

We now consider factorisation properties of morphisms in $\C$. Recall that a non-invertible morphism in $\C$ is called \emph{unfactorisable} if it can not be written as the composition of two non-invertible morphisms, which is similar to an irreducible element in a ring. In this paper, we mainly consider discrete categories such that every non-invertible morphism can be written as a finite composition of unfactorisable morphisms, except in Section \ref{without-FFP}. We give a special name to this property.

\begin{defi}\label{infinite-compositions}
An EI category $\C$ is said to have the finite factorisation property FFP if every non-invertible morphism is a composition of finitely many unfactorisables.
\end{defi}

\begin{exam}
For an arbitrary EI category $\C$, not every non-invertible morphism can be written as a finite composition of unfactorisable morphisms, or even worse, unfactorisable morphisms do not exist. For example, the poset $\mathbb{R}$ with the usual ordering can be viewed as a category, and in this category every morphism is either invertible or factorisable. Another example is the poset $\mathbb{N} \cup \{ \infty \}$ with the usual ordering. The reader can see that unfactorisable morphisms exist, but the unique morphism from object 1 to object $\infty$ cannot be expressed as a composition of unfactorisable morphisms. We shall emphasise that there \emph{do} exist categories with hereditary category algebra, but without the FFP, see Example \ref{hereditary-no-FFP}.
\end{exam}

For an EI category $\C$ with finite factorisation property, the way to decompose a non-invertible morphism into unfactorisable morphisms is in general not unique. We now recall the unique factorisation property UFP due to the second author \cite[Def.~2.7]{li}. Here we take a slightly altered version from \cite[Def.~6.5.2]{SW} which is appropriate for arbitrary, not necessarily skeletal, EI categories. Loosely speaking, the UFP means that the factorisation of every non-invertible morphism into unfactorisable morphisms is unique up to insertion of automorphisms of objects. Therefore, categories having the UFP are analogous to unique factorisation domains in commutative algebra.

\begin{defi}\label{UFP}
The category $\C$ satisfies the \emph{unique factorisation property (UFP)} if for any two chains
\[x = x_0 \xrightarrow{\alpha_1} x_1 \xrightarrow{\alpha_2} \ldots \xrightarrow{\alpha_n} x_n=y\]
and
\[x = x'_0 \xrightarrow{\alpha'_1} x'_1 \xrightarrow{\alpha'_2} \ldots \xrightarrow{\alpha'_{n'}} x'_{n'}=y\]
of unfactorisable morphisms $\alpha_i$ and $\alpha'_i$ which have the same composition $f\colon x\rightarrow y$, we have $n=n'$ and there are isomorphisms $h_i\colon x_i\rightarrow x'_i$ for $1\le i \le n-1$ such  that
\[h_1 \alpha_1 = \alpha'_1, \quad \alpha'_n h_{n-1} = \alpha_n \quad\mbox{and}\quad \alpha'_i h_{i-1}=h_{i}\alpha_i\quad\mbox{for $2\le i \le n-1$}\,,\]
i.\ e., the following ladder diagram commutes:
\begin{equation*}
\begin{tikzcd}[row sep=large]
x \arrow[r, "\alpha_1"] \arrow[d, "\mathrm{id}_x"] & x_1 \arrow[r, "\alpha_2"] \arrow[d, "h_1"] & x_2 \arrow[d, "h_2"] \arrow[r, "\alpha_3"] & \ldots \arrow[r,"\alpha_{n-1}"] & x_{n-1} \arrow[r, "\alpha_n"] \arrow[d,"h_{n-1}"] & \phantom{\,.}y\phantom{\,.} \arrow[d,"\mathrm{id}_y"]\\
x \arrow[r, "\alpha'_1"] & x'_1 \arrow[r, "\alpha'_2"] & x'_2 \arrow[r,"\alpha'_3"] & \ldots \arrow[r,"\alpha'_{n-1}"] & x'_{n-1} \arrow[r,"\alpha'_n"] & \phantom{\,.}y\,.
\end{tikzcd}
\end{equation*}
\end{defi}

\begin{rema}
The UFP is inherited to full subcategories, as is proved in \cite[Prop.~2.10]{li} -- the proof carries over to infinite categories. Contrarily, one can construct relatively straightforward examples showing that the FFP \emph{is not} inherited to full subcategories.
\end{rema}

There is a concrete combinatorial description for skeletal EI categories with unique factorisation property as follows. Recall that an \emph{EI quiver} $Q$ is a datum $(Q_0, Q_1, s, t, f, g)$ such that $Q_0$ and $Q_1$ are the sets of vertices and arrows respectively, $s$ is the source map, and $t$ is the target map. The map $g$ assign a group to each vertex in $Q_0$, and the map $f$ assigns a $(G_d, G_c)$-biset to each arrow $c \to d$ in $Q_1$. Suppose that the category quiver $Q$ has no loops or oriented cycle and satisfies the finite factorisation property; that is, every direct path $\gamma$ in $Q$ can be written as a finite composition of arrows $\alpha_1 \circ \ldots \circ \alpha_n$. Then we can define an EI category $\C_Q$ as follows:
\begin{itemize}
\item objects in $\C_Q$ are the same as vertices in $Q_0$;
\item for each object $c$, let $G_c = \C(c, c) = f(c)$;
\item for two distinct objects $c$ and $d$, let $\mathcal{P}$ be the set of directed paths from $c$ to $d$, and define
\[
\C(c, d) = \bigsqcup_{\gamma \in \mathcal{P}} g(\alpha_1) \times_{f(c_2)} g(\alpha_2) \times_{f(c_3)} \ldots \times_{f(c_n)} g(\alpha_n)
\]
where $\gamma$ is written as
\[
\xymatrix{
c = c_1 \ar[r]^-{\alpha_1} & c_2 \ar[r]^-{\alpha_2} & \ldots \ar[r]^-{\alpha_{n-1}} & c_n \ar[r]^-{\alpha_{n}} & c_{n+1} = d
}
\]
such that all $\alpha_i$'s are in $Q_1$, and $\times_{f(c_i)}$ denotes the fiber product.
\end{itemize}
The reader can easily see that $\C_Q$ is an EI category. Furthermore, it is not hard to verify the following result: a skeletal EI category $\C$ (with finite factorisation property) has unique factorisation property if and only if it is isomorphic to $\C_Q$ for a certain EI quiver $Q$ (with finite factorisation property). For details, please refer to \cite[Proposition 2.8]{li}. These EI categories are called \emph{free EI categories}.

Let us turn to the representation theory of EI categories, for which the reader can refer to \cite{mitchell, trafo, xu}. A \emph{representation} $V$ of $\C$, or a left \emph{$\C$-module}, is a covariant functor from $\C$ to $k \Mod$. We denote the value of $V$ on an object $c$ in $\C$ by $V(c)$. Morphisms between two $\C$-modules are natural transformations. We denote the category of all left $\C$-modules by $\C \Mod$, which is abelian and has enough projective objects. In particular, the $k$-linearisations of representable functors are projective, and every $\C$-module is a quotient of a \emph{free module}, which is a direct sum of $k$-linearisations of representable functors. Thus we can define projective dimensions for objects in $\C \Mod$, and set the left global dimension $\gldim \C$ of $\C$ over $k$ to be the supremum of the projective dimensions of all left $\C$-modules. If $\gldim \C$ is at most 1, we say that $\C$ is {left hereditary} over $k$. Clearly, $\C$ is left hereditary over $k$ if and only if the category $\C \Mod$ is a \emph{hereditary category}; that is, every submodule of a projective $\C$-module is projective as well.

An EI category $\C$ gives rise to a \emph{category algebra} $k\C$ defined as follows: as a $k$-module it has a basis consisting of all morphisms in $\C$, and its multiplication is defined via the composition of morphisms (in particular, the product of two non-composable morphisms is 0); see \cite[Section 2]{webb}. This is an associative algebra equipped with an \emph{approximate unit} (see Definition \ref{approximate unit}), but in general is not unital. The left global dimension of $\C$ over $k$ defined above may be interpreted as the left global dimension of $k\C$ in a suitable sense, see Subsection \ref{approximate basics}.

In the rest of the paper, the words `module', `global dimension'  or `hereditary' always mean `left module', `left global dimension' and `left hereditary', unless explicitly mentioned otherwise.

\subsection{$k$-linear categories} \label{k-linear categories}

In this subsection, let $\C$ be a \emph{directed} $k$-linear category; that is, if $\C(c,d)\neq 0$ and $\C(d,c)\neq 0$, then $c$ and $d$ are isomorphic. The directedness of $\C$ implies that we can define a partial order on the set of isomorphism classes in $\C$ by writing $[c]\le [d]$ if $\C(c,d)\neq 0$. As usual, we write $[c]<[d]$ if $[c]\le [d]$ and $[c]\neq [d]$.

The endomorphism $k$-algebra of an object $c$ is denoted $R_c$. If $c$ and $d$ are objects, then $\C(c,d)$ is an $(R_d,R_c)$-bimodule, i.\ e. a left $(R_d\otimes R_c^{\mathrm{op}})$-module, where $\otimes$ without subscript denotes $\otimes_k$. Parallel to the combinatorial conditions $(\mathrm{A}_d)$ and $(\mathrm{B}_d)$ on biset stabilisers of EI categories, we will consider the following two conditions on these bimodules:

\begin{itemize}
    \item[(A)] For all $c < d$, $\C(c,d)$ is flat as a right $R_c$-module.
    \item[(B)] For all $c < d$ and all left $R_c$-modules $M$, the tensor product $\C(c,d)\otimes_{R_c} M$ is projective as a left $R_d$-module.
\end{itemize}

Note that Condition (B) implies in particular that $\C(c,d)$ is projective as a left $R_d$-module (take $M$ to be the left regular representation $R_c$). On the other hand, if $\C(c,d)$ is projective as an $(R_d,R_c)$-bimodule, and $k$ is semisimple, then (A) and (B) are satisfied. The category in Example \ref{non-symmetric} yields an example where (A) and (B) are satisfied, but $\C(c,d)$ is not projective as a bimodule. This is implied by the results of Subsection \ref{translation}, in particular Lemmas~\ref{modules-over-group-ring} and~\ref{B-vs-Bd}.

Slightly differently from EI categories, for a $k$-linear category $\C$, a left \emph{$\C$-module} $V$ is a covariant $k$-linear functor from $\C$ to $k \Mod$; that is, $V$ must respect the $k$-linear structure of $\C$. As for EI categories, one can define \emph{left global dimension} and \emph{left hereditary} for $\C$. These can again be interpreted in terms of the category algebra of $\C$, defined similarly as in the previous subsection.

In this paper we mainly consider a special type of $k$-linear categories, called \emph{free tensor categories}, which are analogues of EI categories satisfying the unique factorisation property \cite[Def.~2.1, Def.~2.2, Prop.~2.8]{li}.

\begin{defi}
A \emph{(directed) $k$-linear tensor quiver} $(X,\mathcal{U})$ consists of
\begin{itemize}
    \item a partially ordered set $X$,
    \item a $k$-algebra $R_x$ for every $x\in X$, and
    \item an $(R_y,R_x)$-bimodule $U(x,y)$ for all $x<y$.
\end{itemize}
\end{defi}

Suppose $(X,\mathcal{U})$ is a $k$-linear tensor quiver. For every chain $\gamma=(x_0, \ldots, x_\ell)$ in $X$, where $x_i < x_{i+1}$, define
\[U(\gamma) = U(x_{\ell-1},x_\ell) \otimes_{R_{x_{\ell-1}}} U(x_{\ell-2},x_{\ell-1}) \otimes_{R_{x_{\ell-2}}} \ldots \otimes_{R_{x_1}} U(x_0,x_1)\,.\]
This is an $(R_{x_\ell},R_{x_0})$-bimodule. In particular, if $\gamma$ consists of two entries $x<y$, then $U(\gamma)=U(x,y)$; if $\gamma$ has a single entry $x$, then $U(\gamma)=R_x$; if $\gamma$ is empty, then $U(\gamma)=0$.

For a chain $\gamma=(x_0, \ldots, x_\ell)$ as above, denote $\ell(\gamma)=\ell$. For two chains $\gamma=(x_0,\ldots, x_\ell)$ and $\delta=(y_1,\ldots, y_m)$ with $x_\ell=y_1$, let $\delta\gamma$ denote the concatenation $\delta\gamma=(x_0,\ldots, x_\ell, y_2,\ldots y_m)$. Then we have a canonical map
\begin{align}\label{composition}U(\delta)\otimes U(\gamma) \rightarrow U(\delta)\otimes_{R_{x_\ell}}U(\gamma) \cong U(\delta\gamma)\,.\end{align}

\begin{defi}
The \emph{free tensor category} associated to a $k$-linear tensor quiver $(X,\mathcal{U})$ is the $k$-linear category $\mathcal{T}_k(X,\mathcal{U})$ with object set $X$ and
\[\mathcal{T}_k(X,\mathcal{U})(x,y)=\bigoplus_{\gamma\colon x\rightarrow y} U(\gamma)\]
where the sum runs over all chains $\gamma=(x, x_1,\ldots, x_{\ell-1},y)$ in $X$. Composition is given by \eqref{composition}.
\end{defi}

\begin{rema}\label{UFP-vs-free-tensor}
The reader can check that free tensor categories are parallel to discrete EI categories with unique factorisation property, which are called \emph{free EI categories} in \cite{li}. Actually, one can show the following statement: the $k$-linearisation of an arbitrary discrete EI category $\C$ is a free tensor category over some directed tensor quiver if and only if $\C$ has the UFP (in which case the quiver can be chosen as the quiver of unfactorisable morphisms). It follows a posteriori that $\C$ has the FFP. We omit the proof of this statement since we only use the 'if' direction which is easy. It is also clear that a free tensor category $\mathcal{T}_k(X,\mathcal{U})$ satisfies conditions (A) and (B) above if and only if all $U(x,y)$ satisfy the corresponding conditions. Furthermore, a free tensor category is an $\mathbb{N}$-graded category.
\end{rema}

\section{Necessity}
\label{necessity}

The main goal of this section is to prove the necessity direction of Theorem \ref{hereditary-minimal}. In the first subsection, $\C$ can be an arbitrary hereditary directed $k$-linear category. We prove some lemmas which prepare for Subsection \ref{nec-EI}, but can be formulated more generally. In the second subsection, we completely deal with discrete EI categories.

\subsection{Algebraic consequences of hereditarity}

All modules in this subsection are left modules unless specified else. To simplify the proofs, we also assume that $\C$ is skeletal. Under this assumption, the preorder $\leqslant$ on the objects becomes a partial order.

Given a $k$-linear subcategory $\mathcal{D}$ of $\C$, the embedding functor $\iota: \mathcal{D} \to \C$ induces a pullback functor $\iota^{\ast}: \C \Module \to \D \Module$ by sending a $\C$-module $V$ to $V \circ \iota$. We call this functor a \emph{restriction} functor, and denote it by $\Res^{\C}_{\D}$. This is an exact functor. It has a left adjoint $\C \otimes_{\D} -$ called \emph{induction}, which is a left Kan extension; and a right adjoint functor $K: \D \Module \to \C \Module$ which is a right Kan extension.

A full subcategory $\D$ of $\C$ is called an \emph{ideal} if for every pair of objects $x \leqslant y$ of $\C$, whenever $y$ is an object of $\D$, then $x$ is also an object of $\D$. In this case, the functor $K: \D \Module \to \C \Module$ has an explicit description as follows: for $V \in \D \Module$, $(KV)(x) = V(x)$ whenever $x \in \Ob \D$, and is 0 otherwise. Therefore, we easily obtain the following facts:

\begin{lemma} \label{restriction preserves projectives}
If $\D$ is an ideal of $\C$, then $\downarrow_{\D}^{\C}$ preserves projective modules.
\end{lemma}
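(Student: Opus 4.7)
The plan is to exploit the explicit extension-by-zero description of the right adjoint $K$ of $\Res^{\C}_{\D}$ given just before the lemma, and to observe that $K$ is exact, which forces its left adjoint $\Res^{\C}_{\D}$ to preserve projective modules.

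Exactness of $K$ is immediate from the explicit description. Exactness of a sequence of $\C$-modules can be checked objectwise in $k\Mod$; given a short exact sequence $0 \to V' \to V \to V'' \to 0$ in $\D\Mod$, applying $K$ produces at each $x \in \Ob \C$ either the original exact sequence of $k$-modules (when $x \in \Ob \D$) or the trivially exact zero sequence (when $x \notin \Ob \D$). Hence $K$ preserves short exact sequences.

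Next I invoke the standard adjunction argument. For any projective $\C$-module $P$, the adjunction isomorphism gives a natural isomorphism of functors $\D\Mod \to k\Mod$:
\[\Hom_{\D\Mod}(\Res^{\C}_{\D} P,\,-) \;\cong\; \Hom_{\C\Mod}(P,\, K(-))\,.\]
Since $K$ is exact and $\Hom_{\C\Mod}(P,-)$ is exact by projectivity of $P$, the composite on the right is exact; consequently so is the left-hand side, and $\Res^{\C}_{\D} P$ is projective.

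There is no real obstacle. The one point worth emphasising is that the ideal condition really is responsible for the extension-by-zero formula: because $\D$ is down-closed, any $\C$-morphism from an object $x \notin \Ob \D$ into an object $d \in \Ob \D$ would force $x \leqslant d$ and hence $x \in \Ob \D$, a contradiction. Thus the comma category $(x \downarrow \iota)$ used to compute the right Kan extension is empty whenever $x \notin \Ob \D$, and the defining limit for $(KV)(x)$ is the terminal object $0$ in $k\Mod$.
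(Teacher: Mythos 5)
Your argument is correct and is essentially the paper's own proof: the paper likewise deduces the lemma from the adjunction $(\downarrow_{\D}^{\C}, K)$ together with the exactness of $K$ (which follows from its explicit extension-by-zero description). You have merely spelled out the standard details that the paper leaves implicit.
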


\begin{proof}
 The conclusion follows from the adjunction $(\downarrow_{\D}^{\C}, K)$ as well as the exactness of these two functors.
\end{proof}

From now on, we assume that $\C$ is left hereditary.

\begin{lemma} \label{endomorphism algebras are hereditary}
For every object $x \in \Ob \C$, $R_x$ is left hereditary.
\end{lemma}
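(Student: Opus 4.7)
The plan is, for a fixed object $x\in\Ob\C$, to reduce the hereditarity of $R_x$ to that of $\C$ by lifting a submodule $K$ of a projective $R_x$-module $P$ up to a $\C$-module, invoking hereditarity of $\C$ there, and then transporting projectivity back down. Writing $\iota\colon\{x\}\hookrightarrow\C$ for the inclusion of the full subcategory on $x$, the induction functor $\iota_!$ is left adjoint to the exact restriction $\iota^{*}$ and hence preserves projectives. So $Q:=\iota_!P$, given by $Q(y)=\C(x,y)\otimes_{R_x}P$ and $Q(x)=P$, is a projective $\C$-module.

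Next I would let $L\subseteq Q$ be the $\C$-submodule generated by $K\subseteq Q(x)$; concretely $L(y)$ is the image of $\C(x,y)\otimes_{R_x}K$ inside $Q(y)$, so that $L(x)=K$ and, by directedness of $\C$, $L(y)=0$ whenever $\C(x,y)=0$. Since $\C$ is hereditary and $L$ sits inside the projective $\C$-module $Q$, the module $L$ is itself projective in $\C\Module$. It thus suffices to deduce that the value $L(x)=K$ is projective over $R_x$.

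The main obstacle is that $\{x\}$ is usually not an ideal of $\C$, so Lemma~\ref{restriction preserves projectives} does not apply directly to the restriction $\C\Module\to R_x\Module$. I would route the restriction through the coideal $\D:=\{y\in\Ob\C: y\geqslant x\}$. Since $\D$ is upward closed, there are no non-zero morphisms in $\C$ from $\D$ to its complement, so extension by zero defines an exact functor $E\colon\D\Module\to\C\Module$ which is left adjoint to the restriction $j^{*}\colon\C\Module\to\D\Module$; as $j^{*}$ is exact, $E$ also preserves projectives. Combining exactness of $E$ with the identification $\Hom_\C(EV,EU)=\Hom_\D(V,U)$ (valid since $j^{*}E=\mathrm{id}$) shows that $EV$ is projective in $\C\Module$ if and only if $V$ is projective in $\D\Module$. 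Applied to the equality $L=E(L|_\D)$, this yields that $L|_\D$ is projective in $\D\Module$. Finally, $x$ is minimal in $\D$, so $\{x\}$ is an ideal of $\D$, and Lemma~\ref{restriction preserves projectives} applied inside $\D$ gives that $K=L|_\D(x)$ is projective over $R_x$, completing the proof.
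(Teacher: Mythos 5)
Your proof is correct, but it takes a somewhat different route from the paper's. The paper first reduces to the case where $x$ is the \emph{unique minimal} object by showing that the coideal $\D=\{y: y\geqslant x\}$ is itself hereditary (choosing projective resolutions of a $\C$-module supported on $\D$ to be supported on $\D$), and then, for an \emph{arbitrary} $R_x$-module $V$, resolves the extension by zero $KV$ by projective $\C$-modules and restricts the resolution along the ideal $\{x\}$ using Lemma~\ref{restriction preserves projectives}. You instead start from a submodule $K$ of a projective $R_x$-module, transport it upward via the induced projective $Q=\C\otimes_{R_x}P$ and the generated submodule $L$ (with $L(x)=K$, which is where hereditarity of $\C$ is invoked), and then transport projectivity back down in two steps: first from $\C$ to the coideal $\D$ via the adjunction between extension by zero $E$ and restriction $j^*$ (using that $E$ is exact, fully faithful, and that $L$ is supported on $\D$ to see that $E$ \emph{reflects} projectivity), and then from $\D$ to $\{x\}$ via Lemma~\ref{restriction preserves projectives}, since $\{x\}$ is an ideal of $\D$ by skeletality and directedness. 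The two arguments share the same skeleton (pass to $\D$, exploit that $\{x\}$ is an ideal there), but your version replaces the paper's slightly informal step "we may assume that both $P$ and $Q$ are supported on the set of objects in $\D$" by a clean formal adjunction argument, at the cost of verifying the extra identification $\Hom_\C(EV,EU)\cong\Hom_\D(V,U)$; it also works directly with the submodule-of-a-projective formulation of hereditarity rather than with projective dimensions of arbitrary modules. Both are complete and correct.
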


\begin{proof}
Without loss of generality we can assume that $x$ is the unique minimal object in $\C$. Indeed, if this is not the case, we can consider the full subcategory $\D$ consisting of objects $y$ satisfying $y \geqslant x$. Then  $x$ is the unique minimal object in the category $\D$ (here the uniqueness follows from the assumption that $\C$ is skeletal). Furthermore, let $V$ be an arbitrary $\D$-module. Then $V$ can be viewed as a $\C$-module supported on the set of objects in $\D$. Since $\C$ is hereditary, there exists a short exact sequence $0 \to Q \to P \to V \to 0$ of $\C$-modules such that both $P$ and $Q$ are projective $\C$-modules. By our definition of $V$, we may assume that both $P$ and $Q$ are supported on the set of objects in $\D$, and thus they are actually projective $\D$-modules. Consequently, $V$ as a $\D$-module has projective dimension at most 1. Therefore, $\D$ is also hereditary, so we may replace the category $\C$ by its full subcategory $\D$.

Now, let $V$ be an arbitrary $R_x$-module, and consider a short exact sequence $0 \to Q \to P \to KV \to 0$ such that both $P$ and $Q$ are projective $\C$-modules. Let $\E$ be the full subcategory consisting of $x$. Applying $\Res_{\E}^C$ we obtain a short exact sequence
\begin{equation*}
0 \to Q \downarrow_{\E}^{\C} = Q(x) \to P\downarrow_{\E}^{\C} = P(x) \to (KV) \downarrow_{\E}^{\C} = V \to 0.
\end{equation*}
By Lemma \ref{restriction preserves projectives}, both $P(x)$ and $Q(x)$ are projective $\E$-modules.
\end{proof}

\begin{lemma} \label{BP holds}
Let $P$ be a projective $\C$-module. Then for every $x \in \Ob \C$, $P(x)$ is a projective left $R_x$-module. In particular, for every pair of objects $x$ and $y$, $\C(x, y)$ is a projective $R_y$-module.
\end{lemma}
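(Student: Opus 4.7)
The plan is to reduce to the case where $x$ is a minimal object of $\C$, because then $\{x\}$ becomes an ideal and Lemma \ref{restriction preserves projectives} applies directly. The natural candidate for this reduction is the full subcategory $\D$ of all objects $y$ with $y\geqslant x$, an upward-closed subcategory of $\C$ in which $x$ is the unique minimum (by skeletality).

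Two ingredients are required: (i) that $\D$ is again hereditary, which is essentially already proved in the first half of Lemma \ref{endomorphism algebras are hereditary}; and (ii) the substantive point that $\Res^{\C}_{\D}P$ remains projective as a $\D$-module. I would prove (ii) as follows. Since $\D$ is upward-closed, any morphism $z\to y$ with $z\in\Ob\D$ forces $y\geqslant z\in\Ob\D$, hence $y\in\Ob\D$; thus $\C(z,y)=0$ whenever $z\in\Ob\D$ and $y\notin\Ob\D$. Consequently the induction functor $\C\otimes_{\D}-\colon\D\Mod\to\C\Mod$ along the inclusion $\iota\colon\D\hookrightarrow\C$ coincides with extension by zero: it sends $V\in\D\Mod$ to the $\C$-module equal to $V$ on $\D$ and zero elsewhere. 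In particular this induction functor is exact, and the counit $\C\otimes_{\D}(\Res^{\C}_{\D}P)\to P$ is a componentwise monomorphism.

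Now hereditarity of $\C$ enters: as a submodule of the projective $\C$-module $P$, the module $\C\otimes_{\D}(\Res^{\C}_{\D}P)$ is itself projective in $\C\Mod$. Given an arbitrary short exact sequence $0\to M\to N\to \Res^{\C}_{\D}P\to 0$ in $\D\Mod$, applying the exact functor $\C\otimes_{\D}-$ produces a short exact sequence $0\to\C\otimes_{\D}M\to\C\otimes_{\D}N\to\C\otimes_{\D}(\Res^{\C}_{\D}P)\to 0$ in $\C\Mod$ whose rightmost term is now projective, so the sequence splits. Applying $\Res^{\C}_{\D}$, and using that $\Res^{\C}_{\D}(\C\otimes_{\D}M)\cong M$ for every $M\in\D\Mod$ (since $\iota$ is fully faithful), yields a splitting of the original sequence, proving that $\Res^{\C}_{\D}P$ is projective.

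Once the reduction is in hand, $x$ is the unique minimum of $\D$, so $\{x\}$ is an ideal in $\D$, and Lemma \ref{restriction preserves projectives} applied within $\D$ gives that $P(x)=(\Res^{\C}_{\D}P)(x)$ is projective as a left $R_x$-module. The \emph{in particular} clause then follows from the main statement applied to the projective $\C$-module $P=\C(x,-)$, whose value at $y$ is $\C(x,y)$. The main obstacle is step (ii): restriction to a filter does not in general preserve projectivity, and the argument crucially leverages hereditarity of $\C$ to turn the counit monomorphism into a projective submodule of $P$, which is precisely the source of the splitting that propagates back to $\D\Mod$.
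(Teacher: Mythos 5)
Your proof is correct and follows essentially the same route as the paper: both exploit hereditarity to produce a projective submodule of $P$ supported on the filter $\{y\geqslant x\}$, reduce to the case where $x$ is minimal, and then invoke Lemma~\ref{restriction preserves projectives} for the one-object ideal $\{x\}$. The only difference is that the paper works with the smaller submodule $Q=\C\cdot P(x)$ generated by $P(x)$ alone (and is terser about why the restriction to the upward-closed subcategory stays projective), whereas you use the counit image $\C\otimes_{\D}(P\Res^{\C}_{\D})$ and spell out the extension-by-zero argument in detail.
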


\begin{proof}
Consider the submodule $Q = \C \cdot P(x)$ of $P$ generated by $P(x)$. Clearly, $Q$ is also a projective $\C$-module. Again, without loss of generality we can assume that $x$ is the unique minimal object of $\C$, and apply $\downarrow_{\D}^{\C}$ to deduce the conclusion by Lemma \ref{restriction preserves projectives}, where $\D$ is the full subcategory with one object $x$.

The first statement of the Lemma is proved, and the second one follows by an application to the projective $\C$-module $\C\cdot e_x$. \end{proof}

\begin{lemma} \label{describe projective modules}
Let $P$ be a projective $\C$-module generated by its value $P(x)$ on a certain object $x$. Then $P \cong \C \otimes_{R_x} P(x)$.
\end{lemma}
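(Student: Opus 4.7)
My plan is to construct the canonical $\C$-module map
\[\mu \colon \C \otimes_{R_x} P(x) \longrightarrow P\]
defined on each object $y$ by $\alpha \otimes p \mapsto P(\alpha)(p)$ for $\alpha \in \C(x,y)$ and $p \in P(x)$, and then to prove it is an isomorphism. Surjectivity of $\mu$ is immediate from the hypothesis that $P(x)$ generates $P$, since any element of $P(y)$ is then a $k$-linear combination of elements of the form $P(\alpha)(p)$.

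For injectivity, the strategy is to exploit that both the source and target of $\mu$ are projective $\C$-modules. The target $P$ is projective by assumption. For the source, Lemma~\ref{BP holds} guarantees that $P(x)$ is projective as an $R_x$-module; since the induction functor $\C \otimes_{R_x} -$ is the left adjoint of the exact restriction $\Res^{\C}_{\E}$, where $\E$ is the full subcategory of $\C$ on $x$, it preserves projectives. Hence the projectivity of $P$ allows me to pick a section $\sigma \colon P \to \C \otimes_{R_x} P(x)$ of $\mu$.

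The crux is to evaluate at $x$. Under the canonical identification $(\C \otimes_{R_x} P(x))(x) = R_x \otimes_{R_x} P(x) \cong P(x)$, the map $\mu_x$ becomes the identity on $P(x)$, so $\mu \sigma = \mathrm{id}_P$ forces $\sigma_x$ to be the identity as well. I then use that $\C \otimes_{R_x} P(x)$ is generated as a $\C$-module by its value at $x$: every pure tensor $\alpha \otimes p \in \C(x,y) \otimes_{R_x} P(x)$ equals $\alpha \cdot (1 \otimes p)$ with $1 \otimes p \in (\C \otimes_{R_x} P(x))(x)$. Since $\sigma(P)$ is a $\C$-submodule of $\C \otimes_{R_x} P(x)$ containing the whole value at $x$, it must equal $\C \otimes_{R_x} P(x)$. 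Thus $\sigma$ is surjective, and being a section it is also injective, so it is an isomorphism with inverse $\mu$.

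The one point that calls for care is verifying that the canonical identification makes $\mu_x$ the identity and that induction sends projective $R_x$-modules to projective $\C$-modules in the possibly non-unital setting; both are formal consequences of the adjunction recalled at the start of Section~\ref{necessity}. The decisive step is the generation-at-$x$ observation in the third paragraph, which I view as the real content of the argument rather than a serious obstacle.
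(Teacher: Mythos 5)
Your proof is correct and follows essentially the same route as the paper's: both exploit that $P(x)$ is projective over $R_x$ (Lemma~\ref{BP holds}), split the canonical surjection $\C \otimes_{R_x} P(x) \to P$ using projectivity, observe that the map is an isomorphism at $x$, and then use that $\C \otimes_{R_x} P(x)$ is generated by its value at $x$ to kill the complement. The paper phrases this as showing the complementary summand $Q$ satisfies $Q(x)=0$ and is generated by its value at $x$, hence vanishes, which is the same content as your surjectivity argument for the section $\sigma$.
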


\begin{proof}
By Lemma \ref{BP holds}, $P(x)$ is a projective $R_x$-module, so $\C \otimes_{R_x} P(x)$ is a projective $\C$-module. Since $P = \C \cdot P(x)$, there is a natural surjection $\C \otimes_{R_x} P(x) \to P$ induced by multiplication $\alpha \otimes v \mapsto \alpha \cdot v$. Therefore, $\C \otimes_{R_x} P(x) \cong P \oplus Q$ where $Q$ is a projective $\C$-module whose value on $x$ is 0. But $\C \otimes_{R_x} P(x)$ is generated by its value on $x$, hence so is its direct summand $Q$. Consequently, $Q = 0$ and $P \cong \C \otimes_{R_x} P(x)$.
\end{proof}

The following lemma reduces the proofs of some statements to the simple situation of categories with only two objects. Thus we can verify conditions (A) and (B) in this simple situation since they are local conditions on $\C$.

\begin{lemma} \label{subcategories of two objects are hereditary}
Every full subcategory of $\C$ consisting of two objects is left hereditary.
\end{lemma}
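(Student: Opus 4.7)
The plan is to deduce hereditarity of the two-object full subcategory $\D$ from hereditarity of $\C$ via the Ext-comparison identity
\[\Ext^n_\D(V, W) \;\cong\; \Ext^n_\C(\mathrm{ind}\, V, \mathrm{coind}\, W),\]
where $\mathrm{ind} = \C \otimes_\D -$ and $\mathrm{coind}: \D\Mod \to \C\Mod$ is the right Kan extension. This identity follows from the adjunction $(\mathrm{res}, \mathrm{coind})$ together with $\mathrm{res} \circ \mathrm{ind} = \mathrm{id}$ (because $\D \hookrightarrow \C$ is full), \emph{provided} $\mathrm{coind}$ is exact: in that case $\mathrm{coind}$ sends an injective resolution of $W$ to an injective resolution of $\mathrm{coind}\, W$ (it already preserves injective objects as the right adjoint of the exact functor $\mathrm{res}$), and the adjunction $\Hom_\C(A, \mathrm{coind}\, W) \cong \Hom_\D(\mathrm{res}\, A, W)$ derives to the stated identity. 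Once this identity holds, hereditarity of $\C$ forces $\Ext^n_\D(V, W) = 0$ for $n \geq 2$.

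I would first dispatch the trivial cases: if $x = y$, Lemma \ref{endomorphism algebras are hereditary} gives the result; if $\C(x, y) = \C(y, x) = 0$, then $\D\Mod$ splits as a product $R_x\Mod \times R_y\Mod$. Otherwise, by directedness of $\C$, one may assume $x < y$ strictly. I would then reduce to the situation $\C = \C_{[x, y]} := \{z \in \Ob\C : x \leq z \leq y\}$ using two general observations: (a) every ideal of a hereditary category is hereditary (use extension by zero as a right Kan extension together with Lemma \ref{restriction preserves projectives}); (b) every upward-closed full subcategory of a hereditary category is hereditary --- this is essentially the argument embedded in the proof of Lemma \ref{endomorphism algebras are hereditary}, exploiting that a projective $\C$-module supported on an upward-closed subcategory is a direct summand of a free $\C$-module on generators in that subcategory, whose restriction is a free module. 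Applying (a) to $\C_{\leq y}$ and then (b) to $(\C_{\leq y})_{\geq x}$ gives the reduction.

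After this reduction, $x$ is the minimum and $y$ the maximum of $\C$, and I would verify exactness of $\mathrm{coind}$ objectwise using that $(\mathrm{coind}\, W)(z) \cong \Hom_\D(\mathrm{res}(\C e_z), W)$ (Yoneda plus adjunction). For $z \notin \{x, y\}$, directedness forces $\C(z, x) = 0$, so $\mathrm{res}(\C e_z) = (0, \C(z, y), 0)$; since $\C(z, y)$ is a projective $R_y$-module by Lemma \ref{BP holds}, this restricted module is a direct summand of $(\D e_y)^{\oplus n}$ for some $n$, hence a projective $\D$-module. For $z \in \{x, y\}$ we simply have $\mathrm{res}(\C e_z) = \D e_z$, trivially projective. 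Thus $\Hom_\D(\mathrm{res}(\C e_z), -)$ is exact at every $z$, whence $\mathrm{coind}$ itself is exact.

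The main obstacle is the upward-closed reduction (b): since an upward-closed subcategory is typically not an ideal, Lemma \ref{restriction preserves projectives} does not apply directly, and one must argue instead with the structure of projective covers supported on the given subcategory. With this in hand, combining the reduction with the exactness of $\mathrm{coind}$ produces the Ext-comparison identity and completes the proof.
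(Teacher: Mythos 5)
Your proof is correct, but it runs along a genuinely different (essentially dual) track from the paper's. Both arguments rest on the same two pillars: the reduction to the case where $x$ is the unique minimal object (your steps (a) and (b) are exactly the manoeuvres already carried out in the proof of Lemma \ref{endomorphism algebras are hereditary}, and your worry about the upward-closed case is resolved just as you sketch, by realising a projective supported on a coideal as a summand of a free module with generators there), and the observation that the restriction $(\C e_z)\Res_{\D}^{\C}$ is a projective $\D$-module, which comes down to Lemma \ref{BP holds}. The paper then stays entirely on the projective side: it filters an arbitrary $\D$-module $V$ by $0\to V(y)\to V\to V(x)\to 0$, disposes of $V(y)$ using hereditarity of $R_y$, and bounds the projective dimension of $V(x)$ by restricting a length-one projective $\C$-resolution of $K(V(x))$ to $\D$ --- this is precisely where ``restriction preserves projectives'' is used. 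You instead repackage the projectivity of $(\C e_z)\Res_{\D}^{\C}$ as exactness of coinduction, push injective resolutions up from $\D$-modules to $\C$-modules, and read off $\Ext^{\ge 2}_{\D}=0$ from the comparison isomorphism. What your route buys: no filtration of $V$ is needed, and hereditarity of $R_y$ is not invoked separately (it is absorbed into the comparison). What it costs: you need enough injectives in both module categories and the balance of $\Ext$, both standard for these functor categories but worth a word. Two harmless imprecisions: a projective $R_y$-module is a direct summand of a possibly infinite free module, so ``$(\D e_y)^{\oplus n}$'' should allow an arbitrary index set; and the cut-down to objects $\le y$ is superfluous, since once $x$ is minimal one already has $(\C e_z)\Res_{\D}^{\C}=0$ for $z\not\le y$.
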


\begin{proof}
Let $\D$ be the full subcategory of $\C$ consisting of objects $x$ and $y$. If there is no nonzero morphisms between $x$ and $y$, then $\D = R_x \oplus R_y$. By Lemma \ref{endomorphism algebras are hereditary}, $\D$ is left hereditary. Otherwise, as we did in the proof of previous lemmas, we assume that $x$ is the unique minimal object in $\C$.

Firstly, we claim that the functor $\Res_{\D}^{\C}$ preserves projective modules. Since every projective $\C$-module is a direct summand of a  free modules of the form $\C e_z$, and the restriction functor commutes with direct sums, it is enough to show that restrictions of these free modules are projective. But
\begin{equation*}
(\C e_x) \Res_{\D}^{\C} = \D e_x
\end{equation*}
and for $z > x$
\begin{equation*}
(\C e_z) \Res_{\D}^{\C} = \C (z, y),
\end{equation*}
which is a projective $R_y$-module by Lemma \ref{BP holds}, and hence a projective $\D$-module. The claim is proved.

For an arbitrary $\D$-module $V$, there exists a short exact sequence $0 \to V(y) \to V \to V(x) \to 0$. Therefore, it suffices to show the projective dimensions of $V(x)$ and $V(y)$ as $\D$-modules do not exceed 1. For $V(y)$, the conclusion is clear. Now consider $V(x)$. Taking a short exact sequence $0 \to P \to Q \to V(x) \to 0$ of $\C$-modules with both $P$ and $Q$ projective, and applying $\Res_{\D}^{\C}$, we get a short exact sequence of $\D$-modules such that the first two terms are projective, and the conclusion follows.
\end{proof}

In the above proof both $V(x)$ and $V(y)$ can be viewed as either $\C$-modules or $\D$-modules. But in general $V$ cannot be viewed as a $\C$-module.

\begin{lemma} \label{condition A holds}
Condition $\mathrm{(A)}$ holds.
\end{lemma}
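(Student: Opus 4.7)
The plan is to invoke Lemma~\ref{subcategories of two objects are hereditary} to reduce to the case that $\C$ has exactly two objects $c < d$, and then to prove that the induction functor $\C \otimes_{R_c} - \colon R_c\Mod \to \C\Mod$ is exact on short exact sequences of $R_c$-modules; evaluating this at $d$ then yields flatness of $\C(c,d)$ as a right $R_c$-module.

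Since $R_c$ is hereditary by Lemma~\ref{endomorphism algebras are hereditary}, it suffices, for a fixed left $R_c$-module $M$, to show $\Tor_1^{R_c}(\C(c,d), M) = 0$. Pick a short exact sequence $0 \to K \to F \to M \to 0$ of $R_c$-modules with $F$ free, so that $K$ is $R_c$-projective. Applying induction produces a right-exact sequence
\[
\C \otimes_{R_c} K \xrightarrow{\iota} \C \otimes_{R_c} F \to \C \otimes_{R_c} M \to 0,
\]
and evaluating at $d$ identifies $\Tor_1^{R_c}(\C(c,d), M)$ with $\ker(\iota)(d)$. It is therefore enough to show that $\iota$ is injective.

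Let $I$ denote the image of $\iota$. As a submodule of the projective $\C$-module $\C \otimes_{R_c} F$, $I$ is itself projective by hereditarity of $\C$. A direct computation shows $I(c) = K$ and $I(d) = \C(c,d) \cdot K$, which coincides with the $\C$-submodule of $\C \otimes_{R_c} F$ generated by $K \subseteq F$; hence $I$ is generated by its value at $c$. Lemma~\ref{describe projective modules} then yields $I \cong \C \otimes_{R_c} K$. On the other hand, the canonical surjection $\bar\iota \colon \C \otimes_{R_c} K \twoheadrightarrow I$ induced by $\iota$ restricts to the identity on the value at $c$, as does the isomorphism provided by Lemma~\ref{describe projective modules}; since both are $\C$-module maps whose source is generated in degree $c$, they must coincide, so $\bar\iota$ is an isomorphism and $\iota$ is injective.

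The most delicate point is arguing via the image rather than the kernel of $\iota$: a $\C$-submodule of $\C \otimes_{R_c} K$ whose value at $c$ vanishes need not itself vanish, so a direct attempt to verify $\ker(\iota)(c) = 0$ and conclude would not suffice. The crucial observation is that $I$, by contrast, is automatically generated by its value at $c$, which is precisely the hypothesis needed to apply Lemma~\ref{describe projective modules} and identify $I$ with $\C \otimes_{R_c} K$.
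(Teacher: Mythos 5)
Your proof is correct and follows essentially the same route as the paper: reduce to two objects via Lemma~\ref{subcategories of two objects are hereditary}, observe that the image of $\C\otimes_{R_c}K\to\C\otimes_{R_c}F$ (which is the paper's kernel $U$ of the induced surjection onto $\C\otimes_{R_c}M$) is projective and generated at $c$, identify it with $\C\otimes_{R_c}K$ via Lemma~\ref{describe projective modules}, and conclude that the Tor term vanishes. Your explicit remark that the comparison isomorphism and $\bar\iota$ agree because both restrict to the identity on the generating value at $c$ is a small but welcome clarification of a point the paper's proof passes over.
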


\begin{proof}
By Lemma \ref{subcategories of two objects are hereditary}, we can assume that $\C$ has only two objects $x < y$. Let $V_x$ be an arbitrary $R_x$-module and let $0 \to U_x \to P_x \to V_x \to 0$ be a short exact sequence of $R_x$-modules such that $P_x$ is a projective $R_x$-module. Define a $\C$-module $V = \C \otimes_{R_x} V_x$. Then we have a short exact sequence of $\C$-modules $0 \to U \to \C \otimes_{R_x} P_x \to \C \otimes_{R_x} V_x \to 0$.

By considering the value of $U$ on $y$, one knows that $U(y) \cong (\C(x, y) \otimes_{R_x} U_x) /K$, where $K = \Tor_1^{R_x}(\C(x, y), V_x)$. Therefore, $U$ is generated by $U_x$. But $U$ is a projective $\C$-module, so $U \cong \C \otimes_{R_x} U_x$ by Lemma \ref{describe projective modules} and $U(y) \cong \C(x, y) \otimes_{R_x} U_x$. Therefore, restricted to the object $y$ one gets a short exact sequence
\begin{equation*}
0 \to \C(x, y) \otimes_{R_x} U_x \to \C(x, y) \otimes_{R_x} P_x \to \C(x, y) \otimes_{R_x} V_x \to 0.
\end{equation*}
This means that $\Tor_1^{R_x}(\C(x, y), V_x) = 0$. But $V_x$ was arbitrary, and consequently the functor $\C(x,y) \otimes_{R_x} -$ is exact. That is, $\C(x, y)$ is a flat right $R_x$-module.
\end{proof}

\begin{lemma} \label{classify projective modules}
Suppose that $\C$ has only two objects $x < y$. Let $P$ be a projective $\C$-module. Then $P = Q \oplus P/Q$ where $Q$ is generated by $P(x)$.
\end{lemma}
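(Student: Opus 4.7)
The plan is to realise $P$ as a direct summand of a free $\C$-module $F$ and transport an obvious decomposition of the free module down to $P$. Since $\C$ has only two objects $x < y$, every free $\C$-module decomposes as $F = F_x \oplus F_y$, where $F_x$ is a direct sum of copies of the representable $\C e_x$ and $F_y$ is a direct sum of copies of $\C e_y$. The directedness of $\C$ forces $\C(y,x) = 0$, hence $F_y$ vanishes at $x$; consequently $F(x) = F_x(x)$, and since $F_x$ is generated by its value at $x$ one obtains $F_x = \C \cdot F(x)$. In other words, the summand $F_x$ is intrinsic: it is the submodule of $F$ generated by $F(x)$, while the complement $F_y$ is a projective $\C$-module supported entirely at $y$.

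Now embed $P$ as a direct summand of such an $F$, say $F = P \oplus P'$. The key algebraic observation is that the assignment $M \mapsto \C \cdot M(x)$ is additive on direct sums: from $F(x) = P(x) \oplus P'(x)$ one deduces
\[F_x \;=\; \C \cdot F(x) \;=\; \C \cdot P(x) \,+\, \C \cdot P'(x) \;=\; Q \oplus Q'\,,\]
where $Q := \C \cdot P(x)$ and $Q' := \C \cdot P'(x)$; the sum is direct because these two submodules already lie inside the complementary summands $P$ and $P'$ of $F$. Passing to the quotient by $F_x$ then yields
\[F_y \;\cong\; F/F_x \;\cong\; (P/Q) \oplus (P'/Q')\,,\]
so $P/Q$ is a direct summand of the projective $\C$-module $F_y$ and is therefore itself projective.

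Once $P/Q$ is known to be projective, the short exact sequence $0 \to Q \to P \to P/Q \to 0$ splits automatically, giving the claimed decomposition $P \cong Q \oplus (P/Q)$. Notably, this argument uses neither hereditarity of $\C$ nor any of the structural conditions (A), (B) appearing elsewhere; it relies only on the directedness of $\C$, which guarantees $F_y(x)=0$, and on the straightforward additivity of the ``submodule generated by the value at $x$'' construction. The only step requiring any care is the verification that $Q$ and $Q'$ meet trivially inside $F_x$, which is immediate because they already sit in complementary summands of $F$.
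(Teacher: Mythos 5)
Your proof is correct, but it takes a genuinely different route from the paper's. The paper argues cohomologically: it shows $\Ext^1_{\C}(P/Q,V)=0$ for every $\C$-module $V$ by filtering $V$ through $0\to V(y)\to V\to V(x)\to 0$ and using the two identifications $\Hom_{\C}(Q,V(y))=0$ and $\Hom_{\C}(P,V(x))\cong \Hom_{R_x}(P(x),V(x))=\Hom_{R_x}(Q(x),V(x))\cong\Hom_{\C}(Q,V(x))$, together with $\Ext^1_{\C}(P,-)=0$. You instead embed $P$ as a summand of a free module $F=F_x\oplus F_y$, observe that directedness gives $\C(y,x)=0$ so that $F_x=\C\cdot F(x)$ is intrinsic, and then use additivity of $M\mapsto \C\cdot M(x)$ together with $Q\subseteq P$, $Q'\subseteq P'$ to split $F_x=Q\oplus Q'$ and exhibit $P/Q$ as a summand of the projective $F_y\cong F/F_x$. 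All the individual steps check out: the directness of $Q+Q'$, the identification $F/F_x\cong(P/Q)\oplus(P'/Q')$, and the final splitting of $0\to Q\to P\to P/Q\to 0$ are each justified. Your approach is more elementary (no $\Ext$ computations, only the structure of free modules and the vanishing $\C(y,x)=0$), and it additionally shows that $P/Q$ is a summand of a direct sum of copies of $\C e_y$; the paper's argument is shorter on the page and generalises more readily to situations where one does not want to choose an explicit free cover. Both proofs, correctly, use only projectivity of $P$ and directedness of $\C$, and neither needs hereditarity or Conditions (A) and (B).
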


\begin{proof}
We have a short exact sequence $0 \to Q \to P \to P/Q \to 0$. It suffices to show that $P/Q$ is projective, or equivalently, $\Ext_{\C}^1 (P/Q, V) = 0$ for any $\C$-module $V$. Since there is a short exact sequence $0 \to V(y) \to V \to V(x) \to 0$, it suffices to show that $\Ext_{\C}^1 (P/Q, V(x)) = 0 = \Ext_{\C}^1 (P/Q, V(y))$. Applying the functor $\Hom_{\C} (-, V(y))$ to the first short exact sequence and noting that $\Hom_{\C} (Q, V(y)) = 0$, we deduce the second identity. To obtain the first identity, we just note that $\Hom_{\C} (P/Q, V(x)) = 0$ since $(P/Q)(x) = 0$ and
\begin{equation*}
\Hom_{\C} (P, V(x)) \cong \Hom_{R_x} (P(x), V(x)) = \Hom_{R_x} (Q(x), V(x)) \cong \Hom_{\C}(Q, V(x)).\qedhere
\end{equation*}
\end{proof}

\begin{lemma} \label{condition B holds}
Condition $\mathrm{(B)}$ holds.
\end{lemma}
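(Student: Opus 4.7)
My plan is as follows. By Lemma \ref{subcategories of two objects are hereditary} I may assume $\C$ has exactly two objects $x < y$. Given an arbitrary left $R_x$-module $M$, I need to show that $\C(x,y)\otimes_{R_x} M$ is projective as a left $R_y$-module, equivalently that $\Ext^1_{R_y}(\C(x,y)\otimes_{R_x} M, N) = 0$ for every $R_y$-module $N$.

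The key construction will be the short exact sequence of $\C$-modules
\[0 \to V_M \to \C\otimes_{R_x} M \to \bar M \to 0,\]
where $V_M$ denotes $\C(x,y)\otimes_{R_x} M$ placed at $y$ (and $0$ at $x$) and $\bar M$ denotes $M$ placed at $x$ (and $0$ at $y$). Writing $N_y$ for $N$ placed at $y$, applying $\Hom_\C(-, N_y)$ produces a long exact sequence containing the segment
\[\Ext^1_\C(\C\otimes_{R_x} M, N_y) \to \Ext^1_\C(V_M, N_y) \to \Ext^2_\C(\bar M, N_y).\]
The right-hand term vanishes by hereditarity of $\C$. For the left-hand term, I would use the adjunction between induction $\C\otimes_{R_x} -$ and evaluation at $x$: induction is exact by Condition (A) (Lemma \ref{condition A holds}) and preserves projectives as the left adjoint of an exact functor, so the adjunction derives to give $\Ext^n_\C(\C\otimes_{R_x} M, V) \cong \Ext^n_{R_x}(M, V(x))$, which is $0$ for $V = N_y$ since $N_y(x) = 0$. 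Hence $\Ext^1_\C(V_M, N_y) = 0$.

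To conclude, I would observe that the ``concentration at $y$'' functor $R_y\Mod \to \C\Mod$ is exact, fully faithful, and sends the regular module $R_y$ to the projective $\C$-module $\C e_y$, hence preserves projectives. Computing Ext through projective resolutions then yields $\Ext^n_\C(W_y, Z_y) \cong \Ext^n_{R_y}(W, Z)$, and specialising to $W = \C(x,y)\otimes_{R_x} M$ and $Z = N$ produces the desired vanishing $\Ext^1_{R_y}(\C(x,y)\otimes_{R_x} M, N) = 0$. The one creative step is spotting the short exact sequence above; once in place, the two Ext terms flanking $\Ext^1_\C(V_M, N_y)$ vanish for entirely unrelated reasons (hereditarity of $\C$ on one side, the tensor-Hom adjunction on the other), and everything else is bookkeeping.
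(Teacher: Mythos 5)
Your argument is correct, and it takes a genuinely different route from the paper's. Both proofs reduce to two objects via Lemma~\ref{subcategories of two objects are hereditary} and both start from the evaluation sequence $0 \to V(y) \to \C\otimes_{R_x}M \to M \to 0$ (with the outer terms concentrated at $y$ and $x$ respectively), but from there they diverge. The paper argues structurally: it forms the kernel $Q$ of the composite $\C\otimes_{R_x}P_x \twoheadrightarrow M$, notes $Q$ is projective by hereditarity, and then invokes Lemma~\ref{classify projective modules} (splitting off the part generated at $x$) together with Lemma~\ref{describe projective modules} to exhibit $V(y)$ as a direct summand of $Q$, hence projective. You instead apply $\Hom_{\C}(-,N_y)$ to the evaluation sequence and kill the two flanking terms: $\Ext^2_{\C}(\bar M, N_y)=0$ by hereditarity, and $\Ext^1_{\C}(\C\otimes_{R_x}M,N_y)\cong \Ext^1_{R_x}(M,N_y(x))=0$ via the derived induction--evaluation adjunction, which is legitimate because Condition (A) makes induction exact and, being left adjoint to the exact evaluation at $x$, it preserves projectives. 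Your final transfer step is also sound, and can even be streamlined: concentration at $y$ is itself left adjoint to the exact evaluation at $y$ (a map out of a module concentrated at $y$ is just an $R_y$-map on values at $y$), so it is exact and preserves projectives, giving $\Ext^n_{\C}(W_y,Z_y)\cong \Ext^n_{R_y}(W,Z)$ directly. What each approach buys: yours bypasses Lemmas~\ref{describe projective modules} and~\ref{classify projective modules} entirely at the cost of two derived-adjunction computations, while the paper's is more elementary and reuses the structural lemmas it has already established (which it needs elsewhere anyway, e.g.\ in Section~\ref{necessity}).
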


\begin{proof}
Again, we can assume that $\C$ has only two objects $x < y$ by Lemma \ref{subcategories of two objects are hereditary}. Let $V_x$ be an arbitrary $R_x$-module, and let $V = \C \otimes_{R_x} V_x$. Then $V(y) = \C(x, y) \otimes_{R_x} V_x$. Let $0 \to W_x \to P_x \to V_x \to 0$ be a short exact sequence of $R_x$-modules such that $P_x$ is a projective $R_x$-module, and let $P = \C \otimes_{R_x} P_x$ and $W = \C \otimes_{R_x} W_x$. Note that the functor $\C \otimes_{R_x} -$ is exact by Lemma \ref{condition A holds}, so we have the following commutative diagram of sequences where all rows and columns are exact:
\begin{equation*}
\xymatrix{
 & & 0 \ar[d] & 0 \ar[d]\\
 & 0 \ar[r] & W \ar[r] \ar[d] & Q \ar[r] \ar[d] & V(y) \ar[r] & 0\\
 & & P \ar[d] \ar@{=}[r] & P \ar[d]\\
0 \ar[r] & V(y) \ar[r] & V \ar[r] \ar[d] & V_x \ar[r] \ar[d] & 0\\
 & & 0 & 0
}
\end{equation*}
Note that $W_x = Q(x)$, and $Q$ is a projective $\C$-module since $\C$ is left hereditary. By Lemma \ref{classify projective modules}, $V(y)$ is a projective $\C$-module, and hence a projective $R_y$-module.
\end{proof}

\subsection{Necessity of the UFP}\label{nec-EI}

In this subsection, we consider discrete EI categories $\C$; that is, they are usual set-enriched categories, as opposed to $k$-linear categories. Let $k\C$ be the category algebra defined previously. Moreover, we suppose that $\C$ is skeletal, has the FFP, and its category algebra $k\C$ is left hereditary.

\begin{lemma} \label{intersection of projectives}
Let $\alpha$ and $\beta$ be two morphisms in $\C$ starting from an object $x$. Then either one of $k\C \alpha$ and $k\C \beta$ is contained in the other, or their intersection is 0.
\end{lemma}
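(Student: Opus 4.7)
The plan is to prove the contrapositive: assuming $I := k\C\alpha \cap k\C\beta$ is nonzero, I show that one of $k\C\alpha, k\C\beta$ contains the other. Without loss of generality both $\alpha\colon x \to y$ and $\beta\colon x \to z$ are non-isomorphisms, since otherwise (say $\alpha$ is an automorphism of $x$) we have $k\C\alpha = k\C e_x \supseteq k\C\beta$. Further, by the skeletality of the EI category $\C$, at most one of $\C(y, z), \C(z, y)$ is nonempty when $y \neq z$; by possibly swapping $\alpha$ and $\beta$ we may assume $\C(z, y) = \emptyset$.

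The key step is to exploit hereditarity by splitting the short exact sequence
\[0 \to I \to k\C\alpha \oplus k\C\beta \xrightarrow{(u,v) \mapsto u - v} k\C\alpha + k\C\beta \to 0.\]
The right-hand term is a submodule of the projective $k\C$-module $k\C e_x$, and hence itself projective by hereditarity, so the sequence splits. Thus $I$ is isomorphic to a direct summand of $k\C\alpha \oplus k\C\beta$. Now the latter is generated, as a $k\C$-module, by the two elements $\alpha \in (k\C\alpha)(y)$ and $\beta \in (k\C\beta)(z)$, hence by its values on the set $\{y, z\}$. A brief verification---by decomposing any generating expression along the direct summand projections---shows that this property of being generated at a given set of objects is inherited by direct summands; hence $I$ is generated by $I(y) \cup I(z)$.

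The proof is completed by a direct computation of these values. From $\C(z, y) = \emptyset$ one has $(k\C\beta)(y) = 0$, and thus $I(y) = 0$. Next, $(k\C\alpha)(z) = k\C(y, z)\alpha$ and $(k\C\beta)(z) = kG_z\beta$, so $I(z) = k\C(y, z)\alpha \cap kG_z\beta$ inside $k\C(x, z)$. If $I(z) \neq 0$, writing a nonzero element as $\sum_i c_i\mu_i\alpha = \sum_j d_j g_j\beta$ (with $\mu_i \in \C(y, z)$ and $g_j \in G_z$) and matching basis morphisms in $\C(x, z)$ yields specific $\nu \in \C(y, z), h \in G_z$ with $\nu\alpha = h\beta$, whence $\beta = h^{-1}\nu\alpha \in k\C\alpha$ and $k\C\beta \subseteq k\C\alpha$, as required. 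Otherwise $I(y) = I(z) = 0$, and the generation property forces $I = 0$, contradicting our standing assumption. The case $y = z$ is handled analogously with $I(y) = kG_y\alpha \cap kG_y\beta$: if this is zero then $I = 0$ by the same generation argument; if nonzero, then $\alpha$ and $\beta$ must lie in a common $G_y$-orbit, and hence $k\C\alpha = k\C\beta$.

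The main obstacle is the observation that the "generated at a set of objects" property descends to direct summands. Although elementary, this is the structural input that drives the argument; the rest of the proof is a routine verification.
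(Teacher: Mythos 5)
Your proof is correct, and it rests on the same key input as the paper's: by hereditarity, $k\C\alpha + k\C\beta$ is projective as a submodule of $k\C e_x$, so the canonical surjection $k\C\alpha\oplus k\C\beta\to k\C\alpha+k\C\beta$ splits. The two arguments differ in how they finish. The paper first reduces to the case where the orbits $G_u\alpha$ and $G_v\beta$ are disjoint, computes the top $V/\mathfrak{m}V=kG_u\alpha\oplus kG_v\beta$ of $V=k\C\alpha+k\C\beta$, lifts the quotient map through $k\C\alpha\oplus k\C\beta$, and concludes via a Nakayama-style ``surjective, hence an isomorphism'' step. You instead identify the intersection $I$ with the complementary direct summand of the splitting, note that generation at a fixed set of objects passes to direct summands (your verification of this is correct: apply the projection, which is a module map, to a generating expression), and then compute $I(y)$ and $I(z)$ objectwise; this either forces $I=0$ or produces an explicit relation $\nu\alpha=h\beta$ yielding the containment. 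Your endgame is arguably cleaner: it avoids both the explicit computation of $V/\mathfrak{m}V$ and the slightly delicate surjectivity-implies-isomorphism step, at the cost of the (easy) preliminary case analysis on whether $\C(y,z)$ or $\C(z,y)$ is empty and whether $y=z$. Both proofs extract exactly the same leverage from hereditarity.
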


\begin{proof}
Let $u$ and $v$ be the target objects of $\alpha$ and $\beta$ respectively (of course, $u$ and $v$ might be the same). Consider the situation that $\alpha \notin k\C \beta$ and $\beta \notin k\C \alpha$; that is, $\alpha$ (resp., $\beta$) cannot be written as a composition of $\beta$ (resp., $\alpha$) and another morphism. In this case, $G_u \alpha$ and $G_v \beta$ are disjoint; that is, either $u \neq v$, or $u = v$ but $\alpha$ and $\beta$ are in different $G_u$-orbits.

Note that $V = k\C \alpha + k\C \beta$ is a projective module as a submodule of $k\C e_x$. Furthermore, $V/\mathfrak{m}V = kG_u \alpha \oplus kG_v \beta$, where $\mathfrak{m}$ is the two-sided ideal of $k\C$ spanned by all non-invertible morphisms. To see this, note that by the assumption and the definition of $\mathfrak{m}$, the images of both $\alpha$ and $\beta$ are nonzero, so are the images of all elements in $G_u \alpha$ or $G_v \beta$. Therefore, there is a natural surjective $k\C$-module homomorphism
\begin{equation*}
k\C \alpha + k\C \beta \to kG_u \alpha \oplus kG_v \beta.
\end{equation*}
But there is also a natural surjection
\begin{equation*}
k\C \alpha \oplus k\C \beta \to kG_u \alpha \oplus kG_v\beta.
\end{equation*}
By the universal property of projective modules, the first map factors through the second map; that is, the first map is a composition
\begin{equation*}
k\C \alpha + k\C\beta \to k\C \alpha \oplus k\C \beta \to kG_u \alpha \oplus kG_v \beta,
\end{equation*}
and when restricted to the full subcategory consisting of objects $u$ and $v$, the two component maps in this composition become identities. However, since $k\C \alpha \oplus k\C \beta$ is generated by $kG_u \alpha$ and $kG_v \beta$, the first component map in this composition must be surjective. This happens if and only if it is actually an isomorphism, or equivalently, $k\C \alpha \cap k\C \beta = 0$.
\end{proof}

Recall that we assume the FFP for $\C$, so every non-invertible morphism $\alpha: x \to y$ in $\C$ can be written as a finite composition of unfactorisable morphisms. We define the \emph{length} $l(\alpha)$ to be
\begin{equation*}
l(\alpha) = \min \{ n \mid \alpha = \alpha_n \circ \ldots \circ \alpha_1\}
\end{equation*}
where each $\alpha_i$ is unfactorisable. By convention, $l(\alpha) = 0$ if $\alpha$ is invertible. Clearly, $l(\alpha) = 1$ if and only if $\alpha$ is unfactorisable.

The following proposition, as well as Lemma \ref{condition A holds} and Lemma \ref{condition B holds}, forms the necessity direction of Theorem \ref{hereditary-minimal}.

\begin{prop}\label{UFP-uniqueness}
The category $\C$ satisfies the UFP.
\end{prop}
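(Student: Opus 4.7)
The plan is to proceed by induction on the length $n$ of the first factorisation. The base cases $n=0$ ($f$ an isomorphism, both factorisations empty) and $n=1$ ($f$ unfactorisable, which forces $n'=1$ since a nonempty composition of non-invertible morphisms in an EI category is non-invertible) are immediate.

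For the inductive step $n\ge 2$, apply Lemma~\ref{intersection of projectives} to $\alpha_1$ and $\alpha'_1$: the nonzero morphism $f$ lies in both $k\C\alpha_1$ and $k\C\alpha'_1$, so these two submodules are nested, and by symmetry we may assume $k\C\alpha'_1 \subseteq k\C\alpha_1$. Hence $\alpha'_1 = h_1\alpha_1$ for some $h_1 \in \C(x_1, x'_1)$; the unfactorisability of $\alpha'_1$ combined with the non-invertibility of $\alpha_1$ forces $h_1$ to be an isomorphism, and by skeletality $x_1 = x'_1$ with $h_1 \in G_{x_1}$. Setting $g = \alpha_n \cdots \alpha_2$ and $g' = \alpha'_{n'} \cdots \alpha'_2\, h_1$, both in $\C(x_1, y)$, we have $g\alpha_1 = g'\alpha_1 = f$.

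The technical heart is a cancellation statement: whenever $g, g' \in \C(x_1, y)$ satisfy $g\alpha_1 = g'\alpha_1$, one has $g' = g h_0$ for some $h_0$ in the left stabiliser $H := \{h \in G_{x_1} : h\alpha_1 = \alpha_1\}$. To prove it, consider the $k\C$-module map $\phi \colon k\C e_{x_1} \to k\C e_x$, $\gamma \mapsto \gamma\alpha_1$, whose image $k\C\alpha_1$ is projective by hereditarity. The resulting short exact sequence $0 \to K \to k\C e_{x_1} \xrightarrow{\phi} k\C\alpha_1 \to 0$ splits, and projecting onto $K$ shows that $K$ is generated by $K(x_1)$, so Lemma~\ref{describe projective modules} gives $K \cong k\C \otimes_{kG_{x_1}} K(x_1)$. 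Condition~$(\mathrm{B}_d)$, which holds by (the EI-translation of) Lemma~\ref{condition B holds}, makes $H$ a $k^{\times}$-finite group, so the idempotent $e_H := \tfrac{1}{|H|}\sum_{h \in H} h$ lies in $kG_{x_1}$. A direct computation identifies $K(x_1) = kG_{x_1}(1-e_H)$, and hence $K(y) = k\C(x_1, y)(1-e_H) = \{\eta \in k\C(x_1, y) : \eta e_H = 0\}$. The identity $(g - g')\alpha_1 = 0$ thus becomes $g e_H = g' e_H$; comparing these formal sums on the basis $\C(x_1, y)$ of $k\C(x_1, y)$ forces $gH = g'H$, i.e., $g' \in gH$.

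Given $g' = g h_0$ with $h_0 \in H$, we replace $h_1$ by $h_1 h_0^{-1}$: since $h_0 \in H$, this still satisfies $(h_1 h_0^{-1})\alpha_1 = h_1\alpha_1 = \alpha'_1$, and under this replacement the new $g'$ equals $g$. The common morphism $g = g'$ now carries two factorisations into unfactorisables, of lengths $n-1$ and $n'-1$; the inductive hypothesis applied to $g$ yields $n = n'$ together with isomorphisms $h_2, \ldots, h_{n-1}$ fitting the UFP ladder for $g$. Concatenating with the modified $h_1$ produces the desired UFP ladder for $f$. The principal obstacle is precisely this cancellation lemma, because right-multiplication by $\alpha_1$ fails to be injective on $\C(x_1, y)$ in general; the resolution is to locate the obstruction inside the left stabiliser $H$ and to exploit the freedom to modify $h_1$ by an element of $H$ in order to absorb it.
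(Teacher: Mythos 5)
Your proof is correct and follows essentially the same route as the paper: induction on the length of a factorisation, Lemma~\ref{intersection of projectives} to match the first unfactorisable factors up to an isomorphism, and Lemma~\ref{describe projective modules} to show that the two remaining composites differ by an element of the left stabiliser $H$ of $\alpha_1$. The only divergence is in that cancellation step, where the paper identifies $(k\C\alpha_1)(y)$ directly with $k[\C(x_1,y)/H]$ and therefore never needs $H$ to be $k^\times$-finite, whereas your detour through the kernel and the idempotent $e_H$ imports Condition~$(\mathrm{B}_d)$ (and hence semisimplicity of $k$ via Lemma~\ref{B-vs-Bd}) unnecessarily, though this is harmless in the context of Theorem~\ref{hereditary-minimal}.
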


\begin{proof}
Let $\alpha: x \to y$ be a non-invertible morphism in $\C$. We prove the conclusion by an induction on $l = l(\alpha)$. The conclusion is clearly true if $l = 1$. Now suppose that $l > 1$. Let
\begin{equation*}
\xymatrix{
x = x_0 \ar[r]^-{\alpha_1} & x_1 \ar[r]^-{\alpha_2} & \ldots \ar[r]^-{\alpha_n} & x_n = y\\
x = y_0 \ar[r]^-{\beta_1} & y_1 \ar[r]^-{\beta_2} & \ldots \ar[r]^-{\beta_m} & y_m = y
}
\end{equation*}
be two decompositions of $\alpha$ into unfactorisable morphisms. Of course, we can assume that $n = l$ or $m = l$ since we can always compare an arbitrary decomposition to a decomposition with minimal length.

Consider $P^1 = k\C \alpha_1$ and $Q^1 = k\C \beta_1$. Since $\alpha$ lies in both modules, by Lemma \ref{intersection of projectives}, either $P^1 \subseteq Q^1$, or $Q^1 \subseteq P^1$. Without loss of generality we assume that $Q^1 \subseteq P^1$. Consequently, $\beta_1 \in k\C \alpha_1$, so $\beta_1 = g_1 \alpha_1$ where $g_1 \in \C(x_1, y_1)$. But $\beta_1$ is unfactorisable, so $g_1$ must be an isomorphism. Therefore, $x_1 = y_1$ since $\C$ is skeletal, and $P^1 = Q^1$.
Furthermore, if we let $\alpha' = \alpha_n \circ \ldots \circ \alpha_2$ and $\beta' = \beta_n \circ \ldots \circ \beta_2$, then
\begin{equation*}
\alpha' \circ \alpha_1 = \alpha = \beta' \circ \beta_1 = (\beta' g_1) \circ \alpha_1.
\end{equation*}

From the above identity we cannot immediately deduce that $\beta' g_1 = \alpha'$ since the choice of $g_1$ in general is not unique. However, we claim that $\beta' g_1$ and $\alpha'$ lie in the same right $G_{x_1}$-orbit. To show it, consider the projective $\C$-module $k\C \alpha_1$. It is generated by $kG_{x_1} \alpha_1$, and hence by Lemma \ref{describe projective modules}, $k\C \alpha_1 \cong k\C \otimes_{kG_{x_1}} kG_{x_1} \alpha_1$ where the map is the usual multiplication. In particular, by restricting this isomorphism to $y$, we have
\begin{equation}
k\C(x_1, y) \otimes_{kG_{x_1}} kG_{x_1} \alpha_1 \cong k\C(x_1, y) \cdot (kG_{x_1} \alpha_1).
\end{equation}

Let $H$ be the stabiliser of $\alpha_1$ in $G_{x_1}$. Note that $kG_{x_1}\alpha_1 \cong k[G_{x_1}/H]$ as left $kG_{x_1}$-modules. Thus, we can write
\begin{equation*}
(k\C\alpha_1)_y \cong k\C(x_1,y)\otimes_{kG_{x_1}}k[G_{x_1}/H] \cong k[\C(x_1,y)/H]\,.
\end{equation*}
Since $(\beta' g_1) \circ \alpha_1$ and $\alpha' \circ \alpha_1$ define the same element of the left hand side, it follows that  $\beta' g_1$ and $\alpha'$ coincide on the right hand side. So there is an element $h\in H$ such that $\beta' g_1 h=\alpha'$, and the above claim is proved. We thus get a commutative diagram
\begin{equation*}
\xymatrix{
x  \ar[r]^-{\alpha_1} \ar[d]^1 & x_1 \ar[r]^-{\alpha'} \ar[d]^{g_1h} & y \ar[d]^1\\
x  \ar[r]^-{\beta_1} & x_1 \ar[r]^-{\beta'} & y
}
\end{equation*}

Now we turn to the morphism $\alpha': x_1 \to y$. Clearly, $l(\alpha') < l(\alpha)$. By the induction hypothesis, it satisfies the UFP. In particular, for the two different decompositions
\begin{equation*}
\xymatrix{
x_1 \ar[r]^-{\alpha_2} & x_2 \ar[r]^-{\alpha_3} & \ldots \ar[r]^-{\alpha_n} & x_n = y\\
x_1 \ar[r]^-{\beta_2g_1 h} & y_2 \ar[r]^-{\beta_3} & \ldots \ar[r]^-{\beta_m} & y_m = y
}
\end{equation*}
we must have $m =n$, and $x_i = y_i$ for $2 \leqslant i \leqslant n$. Furthermore, there exist elements $g_i \in G_{x_i}$ such that the following diagram commutes:
\begin{equation*}
\xymatrix{
x_1 \ar[r]^-{\alpha_2} \ar[d]^1 & x_2 \ar[r]^-{\alpha_3} \ar[d]^{g_2} & \ldots \ar[r]^-{\alpha_n} \ar[d]^{\ldots} & x_n = y \ar[d]^1 \\
x_1 \ar[r]^-{\beta_2g_1 h} & x_2 \ar[r]^-{\beta_3} & \ldots \ar[r]^-{\beta_n} & x_n = y
}
\end{equation*}
By gluing the two commutative diagrams, we obtain the following commutative diagram
\begin{equation*}
\xymatrix{
x \ar[r]^-{\alpha_1} \ar[d]^1 & x_1 \ar[r]^-{\alpha_2} \ar[d]^{g_1h} & x_2 \ar[r]^-{\alpha_3} \ar[d]^{g_2} & \ldots \ar[r]^-{\alpha_n} \ar[d]^{\ldots} & x_n = y \ar[d]^1 \\
x \ar[r]^-{\beta_1} & x_1 \ar[r]^-{\beta_2} & x_2 \ar[r]^-{\beta_3} & \ldots \ar[r]^-{\beta_n} & x_n = y
}
\end{equation*}
That is, $\alpha$ satisfies the UFP. The conclusion follows by induction.
\end{proof}

\begin{rema} \label{associated graded}
By the above proposition and Remark \ref{UFP-vs-free-tensor}, we conclude that if the category algebra of a discrete EI category $\C$ with the FFP is left hereditary, then the $k$-linearisation of $\C$ is a free tensor category over some directed tensor quiver. This result can be extended to the $k$-linear situation. Explicitly, let $\C$ be a skeletal directed $k$-linear category, and let $\mathfrak{m}$ be the direct sum of those $\C(c, d)$ with $c \neq d$, which is a two-sided ideal of $\C$. Then we can define an associated $\mathbb{N}$-graded category $\widehat{\C}$ such that:
\begin{itemize}
\item $\widehat{\C}$ and $\C$ have the same objects;
\item for any object $c$, $\widehat{\C}(c, c)_0 = \C(c, c)$, and $\widehat{\C}(c, c)_n = 0$ for $n \geq 1$;
\item for $c \neq d$, $\widehat{\C}(c, d)_0 = 0$, and $\widehat{\C}(c, d)_n = e_d (\mathfrak{m}^n / \mathfrak{m}^{n+1}) e_c$ for $n \geq 1$.
\end{itemize}
Then by slightly changing the above argumentation, we can show that if $\widehat{\C}$ (forgetting its graded structure) is left hereditary, then $\widehat{\C}$ is isomorphic to a free tensor category $\mathcal{T}_k (\mathcal{X}, \mathcal{U})$ over a tensor quiver $(\mathcal{X}, \mathcal{U})$ where $\mathcal{X}$ is the set of all objects in $\C$ and $U(x, y)$ is spanned by the set of unfactorisable morphisms from $x$ to $y$ for each pair of objects $x$ and $y$. However, this leaves an interesting question as follows: if $\C$ is left hereditary , is it always true that $\C$ is isomorphic to $\widehat{\C}$, and hence is isomorphic to a free tensor category?
\end{rema}

\section{Sufficiency} \label{sufficiency}

The main goal of this section is to prove Theorem \ref{bound gldim}, relying on techniques of  \cite{cuntz-quillen}, and to deduce the sufficiency direction of Theorem \ref{hereditary-minimal} by a translation process between discrete EI categories and directed $k$-linear categories.

\subsection{Basic facts about rings with approximate unit}
\label{approximate basics}

We recall here some basic facts about rings with approximate unit, paralleling \cite[Sec.~6.2]{SW}, and adapt some standard results to this setting. Recall that a \emph{net} in a set $S$ is a map $I\rightarrow S$ where $I$ is a directed set, i.\ e. a partially ordered set in which any two elements  have a common upper bound.

\begin{defi}
\label{approximate unit}
A ring $S$ has an \emph{approximate unit} if there is a net $(e_i)_{i \in I}$ of idempotents in $S$ with the following two properties:
\begin{itemize}
\item For every $s\in S$, there is some $i$ such that $e_is=s=se_i$.
\item For $i\le j$, we have $e_j e_i = e_i e_j= e_i$.
\end{itemize}
A left $S$-module $M$ is called \emph{non-degenerate} if $SM=M$. Equivalently, if for every $m\in M$ there is some $i$ such that $e_im=m$. An $(S,S)$-bimodule is called \emph{non-degenerate} if it is non-degenerate as a left and as a right $S$-module.
\end{defi}

The category of non-degenerate left $S$-modules is an abelian category and thus has a meaningful notion of projective dimension and global dimension \cite[Sec.~9]{mitchell}. We refer to this dimension if we talk about the global dimension of a ring with approximate unit. If $e_i$ is idempotent, then $Se_i$ is projective.  It follows that the category of non-degenerate $S$-modules has enough projectives, so that it is hereditary if and only if submodules of projectives are projective.

If $\C$ is a $k$-linear category, then the category algebra $k\C$ has an approximate unit given by sums of the form $\sum\limits_{x\in F} e_x$, where $F$ runs through finite subsets of $\mathrm{Ob}(\C)$, ordered by inclusion, and $e_x=\mathrm{id}_x$ as usual. There is an equivalence of $k$-linear categories between the category of non-degenerate $k\C$-modules and the category of representations of $\C$, i.~e.\ $k$-linear functors from $\C$ to $k \Mod$; see \cite[Thm.~7.1]{mitchell} and \cite[Prop.~6.2.4]{SW}.

\begin{rema} \label{approximately Noetherian}
By a result of Auslander \cite{auslander}, the left and right global dimensions of a Noetherian ring coincide. The proof can easily be adapted to rings with approximate unit via replacing the Noetherian condition by the \emph{approximately Noetherian} condition that for every $i$, every left $S$-submodule of $Se_i$ and every right $S$-submodule of $e_iS$ is finitely generated as a left, resp.\ right $S$-module. In particular, using Theorem \ref{hereditary-minimal}, the category algebra of the category $\C$ introduced in Example~\ref{non-symmetric} is \emph{not} approximately Noetherian.
\end{rema}

As in the unital case, projective non-degenerate modules are flat:

\begin{citedlemma}[{\cite[Lemma~6.2.3]{SW}}]
\label{tensor}
Let $S$ be a ring with  approximate unit.\\
(a)  If $M$ is a non-degenerate left $S$-module, then there is a natural isomorphism of $S$-modules
\[S\otimes_S M \cong M\,.\]
(b) A non-degenerate left $S$-module $P$ which is projective in the category of non-degenerate left $S$-modules is flat in the sense that $-\otimes_S P$ is an exact from non-degenerate $S$-modules to abelian groups.
\end{citedlemma}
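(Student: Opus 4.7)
The plan is to handle (a) and (b) in turn, in each case imitating the unital-ring proof but using the approximate unit axioms of Definition~\ref{approximate unit} to substitute for multiplication by~$1$.

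For part (a), I would write down the multiplication map $\mu\colon S\otimes_S M \rightarrow M$, $s\otimes m\mapsto sm$, which is clearly $S$-linear and natural in $M$. To build an inverse $\nu\colon M\rightarrow S\otimes_S M$, I use the non-degeneracy of $M$ to pick, for each $m\in M$, an index $i$ with $e_im=m$ and set $\nu(m)=e_i\otimes m$. Well-definedness is the main check: if $e_im=m=e_jm$ and $e_k$ dominates both $e_i$ and $e_j$, then $e_i\otimes m = e_ie_k\otimes m = e_i\otimes e_km = e_i\otimes m$ using $e_ie_k=e_i$, and similar manipulations show $e_i\otimes m = e_k\otimes m = e_j\otimes m$. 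The identity $\mu\nu = \mathrm{id}_M$ is then immediate, while $\nu\mu=\mathrm{id}$ follows by choosing, for a given $s\otimes m$, an index $j$ with $e_j s = s$ and computing $s\otimes m = e_j s\otimes m = e_j\otimes sm = \nu(sm)$.

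For part (b), the strategy is to reduce flatness of $P$ to flatness of the ``principal'' modules $Se_i$. A projective non-degenerate module is a direct summand of a direct sum of modules of the form $Se_i$, and since flatness is preserved under direct sums and direct summands, it suffices to show each $Se_i$ is flat. Here I would establish the natural isomorphism
\[
N\otimes_S Se_i \;\xrightarrow{\;\cong\;}\; Ne_i, \qquad n\otimes se_i\mapsto nse_i,
\]
for non-degenerate right $S$-modules $N$, by checking that $ne_i\mapsto ne_i\otimes e_i$ is a two-sided inverse (again exploiting $e_ie_i=e_i$ and the $S$-balanced tensor relation). Granted this, the exactness of $-\otimes_S Se_i$ reduces to exactness of the truncation functor $N\mapsto Ne_i$: an injection $N''\hookrightarrow N$ restricts to an injection $N''e_i\hookrightarrow Ne_i$, and right exactness of tensor is automatic, so short exact sequences of non-degenerate right $S$-modules are preserved.

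The main obstacle, and where care is needed, is the bookkeeping around the inverse maps in (a) and (b), both of which depend on a choice (of $e_i$ with $e_im=m$, or of a decomposition $se_i$ of an element of $Se_i$), and require the compatibility axioms $e_je_i=e_i$ for $j\geq i$ to show independence of these choices. Once that is checked, everything else is a routine diagram chase mirroring the unital case.
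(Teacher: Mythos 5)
Your proof is correct: part (a) is the standard argument with the multiplication map and the well-defined inverse $m\mapsto e_i\otimes m$, and part (b) correctly reduces flatness of a projective to flatness of the generators $Se_i$ via the isomorphism $N\otimes_S Se_i\cong Ne_i$ and the exactness of $N\mapsto Ne_i$. The paper itself gives no proof of this lemma (it is cited from \cite[Lemma~6.2.3]{SW}), and your argument is exactly the expected adaptation of the unital case, consistent with how the paper uses the approximate-unit axioms elsewhere (e.g.\ in the proofs of Lemma~\ref{omega-universal} and Proposition~\ref{cuntz-quillen}).
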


\begin{rema}\label{tor}
Part (b) implies that one can define $\mathrm{Tor}$ terms in the usual way via projective resolutions, which are symmetric and yield long exact $\mathrm{Tor}$ sequences for every short exact sequence of non-degenerate $S$-modules.
\end{rema}

Let $A$ be an $S$-algebra with approximate unit, i.~e.\ $A$ is a ring equipped with a ring homomorphism $S\rightarrow A$ such that the image of the approximate unit of $S$ constitutes an approximate unit of $A$.
The following definitions and results are taken from \cite[Sec.~2]{cuntz-quillen}, where they are proved for unital rings, and adapted to the case of rings with approximate unit. See also \cite{hazewinkel-gubareni-kirichenko}.

\begin{defi}
The \emph{$(A,A)$-bimodule of differential forms of degree one} is defined as $\Omega_S^1 A=A\otimes_S A/S$ with the 'Leibniz' $A$-bimodule structure
\[a\cdot (b\otimes [c])\cdot d = ab\otimes [cd] - abc\otimes [d]\,.\]
\end{defi}

This is easily checked to be a non-degenerate $(A,A)$-bimodule.

\begin{defi}
Let $S$ be a ring with approximate unit, $A$ an $S$-algebra, and $M$ an $(A,A)$-bimodule. The abelian group of \emph{$S$-derivations} $\mathrm{Der}_{S}(A,M)$ consists of all derivations $D\colon A\rightarrow M$ with $DS=0$.
\end{defi}

There is a canonical $S$-derivation $d\colon A\rightarrow \Omega_S^1 A$ sending $a$ to $e_i\otimes [a]$, where $e_i\in S$ is chosen such that $ae_i=e_ia=a$. This is well-defined: If $e_j$ is another such element, we may assume $i\le j$ and get
\[e_i\otimes [a] = e_je_i\otimes [a] = e_j\otimes [e_ia]=e_j\otimes [a]\,.\]
It is a derivation by the definition of the Leibniz bimodule structure.

\begin{lemma}\label{omega-universal}
$d$ is a universal $S$-derivation, furnishing an isomorphism
\[\mathrm{Hom}_{(A,A)}(\Omega_S^1 A, M) \cong \mathrm{Der}_S(A, M)\,.\]
\end{lemma}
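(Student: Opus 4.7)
The plan is to mimic the classical unital proof while being careful about the approximate unit. I will construct an explicit inverse to the map $\Phi\colon \mathrm{Hom}_{(A,A)}(\Omega_S^1A,M)\to \mathrm{Der}_S(A,M)$, $\phi\mapsto \phi\circ d$. First I should observe that $\Phi$ lands in $\mathrm{Der}_S(A,M)$: the map $d$ is itself an $S$-derivation (the derivation property follows from the Leibniz bimodule structure, while $d(s)=0$ for $s\in S$ because $[s]=0$ in $A/S$), and post-composing with an $A$-bimodule map preserves both properties. Note also that $d$ is independent of the choice of $e_i$ with $ae_i=e_ia=a$: for $i\leq j$ with both being large enough, $e_i\otimes [a]=e_je_i\otimes[a]=e_j\otimes[e_ia]=e_j\otimes[a]$.

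For the inverse, given $D\in\mathrm{Der}_S(A,M)$, define
\[\phi_D\colon A\otimes_k A \longrightarrow M,\qquad a\otimes b\longmapsto a\cdot Db.\]
This descends to $\Omega_S^1A=A\otimes_S A/S$: it kills $a\otimes s$ (because $Ds=0$) and is $S$-balanced, since $D(sc)=s\cdot Dc + Ds\cdot c=s\cdot Dc$ gives $\phi_D(as\otimes[c])=as\cdot Dc=\phi_D(a\otimes[sc])$. Left $A$-linearity is immediate from the definition. Right $A$-linearity is the key calculation and uses the Leibniz structure:
\[\phi_D\bigl((b\otimes[c])\cdot d\bigr)=\phi_D(b\otimes[cd]-bc\otimes[d])=b\cdot D(cd)-bc\cdot Dd=b\cdot Dc\cdot d=\phi_D(b\otimes[c])\cdot d\,.\]
So $\phi_D$ is a well-defined bimodule map, and $D\mapsto \phi_D$ defines $\Psi\colon\mathrm{Der}_S(A,M)\to\mathrm{Hom}_{(A,A)}(\Omega_S^1A,M)$.

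To verify $\Phi\Psi=\mathrm{id}$, compute $\phi_D(da)=\phi_D(e_i\otimes[a])=e_i\cdot Da$ for any $e_i$ with $ae_i=e_ia=a$; but then $Da=D(e_ia)=e_i\cdot Da+De_i\cdot a=e_i\cdot Da$ since $De_i=0$, so $\phi_D(da)=Da$. For $\Psi\Phi=\mathrm{id}$, given a bimodule map $\phi$ with associated derivation $D=\phi\circ d$, I need to show $\phi(a\otimes[b])=a\cdot Db$. Using the directedness of the net, choose $e_j$ large enough to satisfy simultaneously $ae_j=a$ and $e_jb=be_j=b$; then $db=e_j\otimes[b]$, and left $A$-linearity gives $\phi(a\otimes[b])=\phi(ae_j\otimes[b])=\phi(a\cdot db)=a\cdot\phi(db)=a\cdot Db$, as required. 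The naturality in $M$ of $\Phi$ (and hence $\Psi$) is clear from the formulas.

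The only non-routine point is the well-definedness of $d$ and the identity $\phi(a\otimes[b])=a\cdot db$ appearing in the last step; both rely on the fact that $(e_i)$ is a \emph{net} rather than merely a sequence, so that approximate units suitable for several elements at once always exist. Once this subtlety is handled, the proof reduces to the same formal manipulations as in the unital Cuntz--Quillen setting.
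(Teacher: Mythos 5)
Your proposal is correct and follows essentially the same route as the paper: the inverse is given by the same formula $a\otimes[b]\mapsto a\cdot Db$, uniqueness is forced by choosing a common approximate unit $e_j$ for $a$ and $b$ via directedness of the net, and existence amounts to the well-definedness and bilinearity checks you carry out (which the paper leaves to the reader). The only difference is that you spell out the Leibniz/right-linearity computation and the identity $Da=e_i\cdot Da$ explicitly, which the paper summarises as ``one easily checks''.
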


\begin{proof}
We have to show that for an $S$-derivation $D\colon A\rightarrow M$, there is a unique $(A,A)$-linear map $F\colon \Omega_S^1 A\rightarrow M$ such that $F\circ d=D$, i.\ e. $F(e_i\otimes [a])=Da$ with $e_i$ as above. Uniqueness is clear: Let $a, b\in A$ and choose $e_i$ such that $ae_i=e_ia=a$ and $be_i=e_ib=b$. Then
\[F(a\otimes [b]) = F(a\cdot (e_i\otimes [b])) = a\cdot F(e_i\otimes [b]) = a\cdot Db\,.\]
On the other hand, one easily checks as above that this furnishes a well-defined $(A,A)$-bilinear map $F\colon \Omega_S^1 A \rightarrow M$ which satisfies $F\circ d=D$, so that we have proved existence.
\end{proof}

\begin{prop} \label{cuntz-quillen} Let $S$ be a ring with approximate unit and $A$ an $S$-algebra with approximate unit. There is a short exact sequence
\begin{equation*}
0\longrightarrow \Omega_S^1 A \xrightarrow{\phantom{a}\kappa\phantom{a}} A\otimes_S A \xrightarrow{\phantom{a}m\phantom{a}} A \longrightarrow 0\end{equation*}
of $(A,A)$-bimodules, with $m(a_0\otimes a_1) = a_0a_1$ and $\kappa$ defined in the proof below.
\end{prop}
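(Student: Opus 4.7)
The plan is to define $\kappa$ via the universal property of $\Omega_S^1 A$ furnished by Lemma~\ref{omega-universal}. I would construct an $S$-derivation $D\colon A \to A\otimes_S A$ by $D(a) = e\otimes a - a\otimes e$, where $e\in S$ is any element of the approximate unit satisfying $ea = a = ae$. Arguing as for the derivation $d$ preceding Lemma~\ref{omega-universal}, this is independent of the choice of $e$. For $a,b\in A$, one can choose $e$ large enough to fix both simultaneously, and then
\[D(a)\cdot b + a \cdot D(b) = e\otimes ab - a\otimes eb + ae\otimes b - ab\otimes e = e\otimes ab - ab\otimes e = D(ab),\]
so $D$ is a derivation. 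It vanishes on the image of $S$: for $s\in S$ with $es = s = se$, the relation $xs\otimes y = x\otimes sy$ in $A\otimes_S A$ yields $s\otimes e = es\otimes e = e\otimes se = e\otimes s$, hence $D(s) = 0$. Lemma~\ref{omega-universal} then produces a unique $(A,A)$-bimodule map $\kappa\colon \Omega_S^1 A \to A\otimes_S A$ with $\kappa\circ d = D$, and unwinding the Leibniz bimodule structure gives the explicit formula $\kappa(a\otimes [b]) = a\cdot D(b) = a\otimes b - ab\otimes e$, valid for any $e\in S$ with $ae=a$ and $eb=b=be$.

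With $\kappa$ in hand, I would verify exactness. The map $m$ is surjective because $a = m(e\otimes a)$ whenever $ea = a$, and $m\circ \kappa = 0$ since $m(a\otimes b - ab\otimes e) = ab - ab\cdot e = 0$ using $be = b$. To show that $\kappa$ is injective and that $\mathrm{image}(\kappa) = \ker(m)$, I use the canonical surjection $\pi\colon A\otimes_S A \twoheadrightarrow A\otimes_S (A/S) = \Omega_S^1 A$, $\pi(a\otimes b) = a\otimes [b]$. Then
\[\pi(\kappa(a\otimes [b])) = a\otimes [b] - ab\otimes [e] = a\otimes [b],\]
because $[e] = 0$ in $A/S$; so $\pi\circ \kappa = \mathrm{id}_{\Omega_S^1 A}$, which forces $\kappa$ to be injective. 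For the reverse inclusion, take $\xi = \sum_{i=1}^n a_i\otimes b_i \in \ker(m)$, so $\sum_i a_i b_i = 0$. Choosing $e$ large enough that $b_i e = b_i$ for every $i$ (possible as the sum is finite and the net is directed), I compute
\[\kappa(\pi(\xi)) = \sum_i\bigl(a_i\otimes b_i - a_i b_i\otimes e\bigr) = \xi - \Bigl(\sum_i a_i b_i\Bigr)\otimes e = \xi,\]
which exhibits $\xi$ as an element of the image of $\kappa$.

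The substantive obstacle is purely bookkeeping around the approximate unit rather than any conceptual difficulty: one must repeatedly invoke directedness to pick idempotents $e$ acting as a unit on finitely many prescribed elements simultaneously, and one must check that $D$, and hence $\kappa$, descends through the quotient defining $\Omega_S^1 A$; this is exactly the content of $D(S) = 0$ above. Everything else reduces to the manipulations one knows from the unital Cuntz--Quillen setting.
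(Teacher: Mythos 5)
Your proof is correct, and it reaches the same map $\kappa(a\otimes [b]) = a\otimes b - ab\otimes e$ as the paper, but by a somewhat different route. The paper constructs an $(A,S)$-linear section $\iota(a)=a\otimes e_i$ of the multiplication $m$, identifies $\ker(m)$ with $\operatorname{coker}(\iota)$ via the idempotent $\mathrm{id}-\iota m$, and then identifies $\operatorname{coker}(\iota)$ with $A\otimes_S (A/S)=\Omega_S^1A$ using right-exactness of the tensor product; bimodule-linearity of $\kappa$ is then checked by hand at the end. You instead build the derivation $D(a)=e\otimes a - a\otimes e$ and obtain $\kappa$ from the universal property of Lemma~\ref{omega-universal}, which buys you $(A,A)$-bilinearity of $\kappa$ for free, and you verify exactness directly via the retraction $\pi\circ\kappa=\mathrm{id}_{\Omega_S^1A}$ together with the computation $\kappa(\pi(\xi))=\xi$ for $\xi\in\ker(m)$. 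The two arguments are close cousins (your $\pi$ splitting $\kappa$ is dual to the paper's $\iota$ splitting $m$), and your bookkeeping with the approximate unit --- choosing $e$ in the directed net large enough to fix finitely many elements at once, and checking $D(S)=0$ so that $D$ descends --- is exactly the care the setting requires; the only mild dependency is that you lean on the existence half of Lemma~\ref{omega-universal}, which the paper states with proof just before the proposition, so this is legitimate.
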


\begin{proof} We define an $(A,S)$-linear section $\iota$ of the multiplication map $m$ as follows: For $a\in A$, choose $e_i$ with $e_ia=ae_i=a$ and set $\iota(a)=a\otimes e_i$. This is well-defined: Suppose $e_j$ is another such element. Since $I$ is directed, we may assume that $i\le j$. Then
\[a\otimes e_i = a\otimes e_ie_j = a e_i \otimes e_j= a \otimes e_j\,.\]
Thus, the kernel of $m$ is identified (as $(A,S)$-bimodule) with the cokernel of $\iota$ via the projection $\mathrm{id}_{A\otimes_S  A}-\iota m$. Since $\iota$ is given by the canonical isomorphism
\[A\cong S\otimes_S A \]
stemming from the fact that $A$ is a non-degenerate module over $S$, cf. Lemma~\ref{tensor} (a), followed by the canonical morphism $S\rightarrow A$, and tensoring is right exact (over an arbitrary non-unital ring), we identify the cokernel of $\iota$ with $A\otimes_S A/S=\Omega_S^1 A$. It follows that the map
\[\kappa\colon \Omega_S^1 A \rightarrow A \otimes_S A \]
sending $a\otimes [b]$ to
\[(\mathrm{id}-\iota m)(a\otimes b) = a\otimes b - ab\otimes e_i\,,\]
where $e_i$ is chosen such that $e_iab=ab=abe_i$, is well-defined and renders the above sequence exact. Finally, one checks that $\kappa$ is a morphism of $(A,A)$-bimodules if $\Omega_S^1 A$ has the Leibniz bimodule structure.
\end{proof}

\subsection{Cuntz-Quillen proof of sufficiency}

In this section, $\C=\mathcal{T}_k(X,\mathcal{U})$ is the free tensor category over a $k$-linear tensor quiver $(X,\mathcal{U})$ with $R_x$ hereditary for every $x\in X$ which satisfies Conditions (A) and (B) as introduced in Subsection \ref{k-linear categories}. We will now prove that the category algebra of $\C$ over $k$ is hereditary. Let $R$ denote the ring $\bigoplus\limits_{c} R_c$. It is a hereditary ring with approximate unit.

Proposition \ref{cuntz-quillen} gives us a short exact sequence
\begin{equation} \label{cuntz-quillen-ses}0\longrightarrow \Omega_R^1 \C \xrightarrow{\phantom{a}\kappa\phantom{a}} \C\otimes_R \C \xrightarrow{\phantom{a}m\phantom{a}} \C \longrightarrow 0\end{equation}
of $(\C,\C)$-bimodules.

\begin{lemma}
Let $M$ be any left $\C$-module. There is an exact sequence of left $\C$-modules
\begin{equation} \label{ses-M} 0\longrightarrow \Omega_R^1 \C\otimes_{\C} M \longrightarrow \C\otimes_R M \longrightarrow M\longrightarrow 0\,.\end{equation}
\end{lemma}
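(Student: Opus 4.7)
The plan is to apply the functor $-\otimes_\C M$ to the bimodule short exact sequence \eqref{cuntz-quillen-ses} from Proposition \ref{cuntz-quillen}, and then identify the resulting terms using the canonical isomorphisms of Lemma \ref{tensor}(a). The rightmost term becomes $\C\otimes_\C M\cong M$. The middle term becomes $(\C\otimes_R\C)\otimes_\C M$, which by associativity of tensor products and another application of Lemma \ref{tensor}(a) is canonically identified with $\C\otimes_R(\C\otimes_\C M)\cong \C\otimes_R M$. The leftmost term is $\Omega^1_R\C\otimes_\C M$ as stated, with the connecting maps induced by $\kappa$ and the multiplication $m$.

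Right exactness of the resulting sequence follows at once from the right exactness of tensor products. For exactness at $\Omega^1_R\C\otimes_\C M$, I would appeal to the long exact $\mathrm{Tor}$-sequence associated to \eqref{cuntz-quillen-ses} (cf.\ Remark \ref{tor}), reducing the problem to the vanishing $\mathrm{Tor}_1^\C(\C, M)=0$; in other words, to flatness of $\C$ as a right $\C$-module. This in turn is immediate from Lemma \ref{tensor}(a): the functor $\C\otimes_\C -$ on the category of non-degenerate left $\C$-modules is naturally isomorphic to the identity functor, and is therefore exact. Alternatively, one may observe that $\C=\bigoplus_{y\in\Ob(\C)}e_y\C$ is a direct sum of projective non-degenerate right $\C$-modules, hence projective, hence flat by the right-module analogue of Lemma \ref{tensor}(b).

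The only real subtlety is this flatness/$\mathrm{Tor}$-vanishing step at the left end; all other moves are routine formal manipulations using associativity of tensor products and the non-degeneracy of the modules involved. It is worth noting that the argument does not require Conditions (A), (B), or any hypothesis on the endomorphism algebras $R_c$, so the lemma holds for arbitrary $k$-linear categories $\C$ and arbitrary non-degenerate left $\C$-modules $M$.
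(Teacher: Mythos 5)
Your proposal is correct and follows essentially the same route as the paper: tensor the sequence of Proposition \ref{cuntz-quillen} on the right with $M$, identify the terms via Lemma \ref{tensor}(a), and obtain exactness on the left from $\mathrm{Tor}^1_{\C}(\C,M)=0$, i.\ e.\ flatness of $\C$ as a right $\C$-module (cf.\ Remark \ref{tor}). Your extra observations --- the explicit associativity identification of the middle term and the remark that no hypotheses on $R_c$ or Conditions (A)/(B) are needed --- are accurate but not a different argument.
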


\begin{proof}
Tensor the exact sequence \eqref{cuntz-quillen-ses} from the right with $M$ and note that it stays exact since the last term $\C$ is a flat right $\C$-module and thus $\mathrm{Tor}_\C^1(\C,M)=0$. This uses Remark \ref{tor}. \end{proof}

\begin{lemma} \label{hom-dim-CM} Let $N$ be a left $R$-module. Then the projective dimension of $\C\otimes_R N$ as a left $\C$-module is at most the projective dimension of $N$ over $R$ (which is at most $1$).
\end{lemma}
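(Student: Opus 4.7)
The plan is to lift an $R$-projective resolution of $N$ of length at most $1$ to a projective resolution of $\C\otimes_R N$ of the same length via the extension-of-scalars functor $\C\otimes_R-$. Since each endomorphism algebra $R_c$ is hereditary by hypothesis, the direct sum $R=\bigoplus_c R_c$ is hereditary as a ring with approximate unit, so $\mathrm{pd}_R(N)\le 1$, and we may pick a short exact sequence
\[0\longrightarrow P_1\longrightarrow P_0\longrightarrow N\longrightarrow 0\]
of left $R$-modules with $P_0,P_1$ projective; if $\mathrm{pd}_R(N)=0$ we take $P_1=0$ and the argument collapses.

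Two facts then need to be established: (i) $\C\otimes_R-$ is exact, and (ii) $\C\otimes_R-$ sends projective $R$-modules to projective $\C$-modules. For (i), it suffices that $\C$ is flat as a right $R$-module. Using the orthogonal decomposition $\C\otimes_R M=\bigoplus_c \C e_c\otimes_{R_c} e_cM$ coming from $R=\bigoplus_c R_c$, this reduces to showing that each $\C e_c=\bigoplus_{d\ge c}\C(c,d)$ is flat as a right $R_c$-module. The summand $\C(c,c)=R_c$ is flat over itself, the summands $\C(c,d)$ for $d>c$ are flat over $R_c$ by Condition~(A), and direct sums preserve flatness. For (ii), I invoke the change-of-rings adjunction: since $R$ and $\C$ share the same approximate unit (finite sums of identities $e_x$), restriction from $\C\Mod$ to $R\Mod$ preserves non-degeneracy and is exact, and $\C\otimes_R-$ is its left adjoint, so it preserves projectives.

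Applying $\C\otimes_R-$ to the chosen short exact sequence then yields a projective resolution
\[0\longrightarrow \C\otimes_R P_1\longrightarrow \C\otimes_R P_0\longrightarrow \C\otimes_R N\longrightarrow 0\]
of $\C\otimes_R N$ of length at most $\mathrm{pd}_R(N)$, as desired.

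The main obstacle is essentially formal bookkeeping in the non-unital setting: one must verify that the change-of-rings adjunction, exactness of restriction, and behaviour of flatness under direct sums all survive for rings with approximate unit, using the machinery of Subsection~4.1. It is worth pointing out that Condition~(B) plays \emph{no} role in this particular lemma; it will intervene later when one analyses the projective dimension of $\Omega_R^1\C\otimes_\C M$, where one needs pointwise projectivity of the values of induced modules over the endomorphism algebras $R_d$.
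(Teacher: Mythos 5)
Your proof is correct and follows essentially the same route as the paper's: the paper's proof simply observes that Condition~(A) means $\C$ is flat as a right $R$-module and then tensors up a projective resolution of $N$. You have merely filled in the routine details (the decomposition over the idempotents $e_c$, and the adjunction argument showing $\C\otimes_R-$ preserves projectives) that the paper leaves implicit.
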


\begin{proof}
Condition (A) means that $\C$ is flat over $R$. Take a projective resolution of $N$ over $R$, tensor it up to $\C$ and it will stay exact.
\end{proof}

Let $U=\bigoplus_{x<y} (x,y)$ denote the $(R,R)$-bimodule of unfactorisables.

\begin{prop} \label{omega-of-tensor}
There is an isomorphism $\Omega_1^R\C\cong \C\otimes_R U\otimes_R \C$ of $(\C,\C)$-bimodules.
\end{prop}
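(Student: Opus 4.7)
My plan is to prove Proposition~\ref{omega-of-tensor} via the universal property of the tensor algebra. The key structural observation, essentially contained in Remark~\ref{UFP-vs-free-tensor}, is that the free tensor category $\C=\mathcal{T}_k(X,\mathcal{U})$ is precisely the tensor algebra of the $(R,R)$-bimodule $U=\bigoplus_{x<y}U(x,y)$ over the ring with approximate unit $R=\bigoplus_x R_x$, decomposing as $\C=\bigoplus_{n\ge 0} U^{\otimes_R n}$ with $U^{\otimes_R n}=\bigoplus_{\ell(\gamma)=n}U(\gamma)$. The strategy is to show that both sides of the claimed isomorphism corepresent the same functor on $(\C,\C)$-bimodules, namely $M\mapsto\mathrm{Hom}_{(R,R)}(U,M)$, and conclude by Yoneda's lemma.

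The representability of $\C\otimes_R U\otimes_R \C$ is a standard tensor-hom adjunction: an $(R,R)$-bilinear map $f\colon U\to M$ extends uniquely to the $(\C,\C)$-bilinear map $a\otimes u\otimes b\mapsto a\cdot f(u)\cdot b$. For $\Omega_R^1\C$, Lemma~\ref{omega-universal} already identifies $\mathrm{Hom}_{(\C,\C)}(\Omega_R^1\C,M)\cong\mathrm{Der}_R(\C,M)$, the group of $R$-linear derivations into $M$.

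The main work is to establish the natural bijection $\mathrm{Der}_R(\C,M)\cong\mathrm{Hom}_{(R,R)}(U,M)$ given by restriction to $U$. Injectivity is immediate: since $\C$ is generated as an $R$-algebra by $U$ and an $R$-derivation kills $R$, the Leibniz rule determines the derivation from its values on $U$. For surjectivity, given $f\colon U\to M$, I define on each summand $U^{\otimes_R n}$ of $\C$ the map $D(u_n\otimes\cdots\otimes u_1)=\sum_{i=1}^n u_n\cdots u_{i+1}\cdot f(u_i)\cdot u_{i-1}\cdots u_1$, and set $D=0$ on the degree-zero part $R$. One then checks that $D$ descends through the $R$-tensor products (because $f$ is $R$-bilinear) and that $D$ is a derivation by a direct Leibniz computation on pure tensors in $U^{\otimes_R m}\cdot U^{\otimes_R n}$.

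Chasing the Yoneda isomorphism produces the explicit bimodule map $\C\otimes_R U\otimes_R\C\to\Omega_R^1\C$, $a\otimes u\otimes b\mapsto a\cdot(du)\cdot b$, where the right multiplication by $b$ is interpreted via the Leibniz structure. The main obstacle I anticipate is a careful handling of the non-unital setting: the universal derivation $d$ depends on choosing idempotents from the approximate unit, and the extension $D$ must be confirmed compatible with tensoring over $R$ in the absence of a global unit. This bookkeeping is tractable, however, because in each summand $U(\gamma)$ indexed by a chain $\gamma=(x_0,\dots,x_\ell)$, the relevant bimodule structure is over the honest unital rings $R_{x_0}$ and $R_{x_\ell}$, reducing the verifications to summand-wise calculations inside $U^{\otimes_R n}$.
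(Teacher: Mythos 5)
Your proof is correct and is essentially the paper's own argument: the paper simply cites \cite[Prop.~2.6]{cuntz-quillen} together with Lemma~\ref{omega-universal}, and the Cuntz--Quillen proof is exactly the corepresentability argument you give (both sides corepresent $M\mapsto\mathrm{Hom}_{(R,R)}(U,M)$, with the explicit Leibniz-sum formula providing the universal derivation on the tensor algebra). Your attention to the approximate-unit bookkeeping is the only content the paper leaves implicit, and you handle it correctly.
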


\begin{proof}
The proof is the same as in \cite[Prop. 2.6]{cuntz-quillen}, using Lemma~\ref{omega-universal}.
\end{proof}

Now we are ready to prove the main theorem of this section.

\begin{thm} \label{stronger result}
Let $(X,\mathcal{U})$ be a tensor quiver with hereditary $R_x$ for every $x\in X$, satisfying conditions (A) and (B). Then $\mathcal{T}_k(X,\mathcal{U})$ is hereditary. \end{thm}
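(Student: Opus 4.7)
The plan is to combine the short exact sequence \eqref{ses-M} from the previous subsection with the structural identification of $\Omega^1_R \C$ given by Proposition \ref{omega-of-tensor}, and then to observe that the resulting three-term sequence exhibits every left $\C$-module $M$ as a quotient of a $\C$-module of projective dimension at most one by a projective submodule. Fix $M$, viewed as a non-degenerate left $\C$-module; Lemma \ref{tensor}(a) gives $\C \otimes_\C M \cong M$, so tensoring the bimodule isomorphism $\Omega^1_R \C \cong \C \otimes_R U \otimes_R \C$ with $M$ over $\C$ turns \eqref{ses-M} into a short exact sequence
\[0 \longrightarrow \C \otimes_R U \otimes_R M \longrightarrow \C \otimes_R M \longrightarrow M \longrightarrow 0\]
of left $\C$-modules. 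It then suffices to verify that the leftmost term is projective over $\C$ and that the middle term has projective dimension at most one over $\C$.

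For the middle term, I would decompose $M = \bigoplus_c M(c)$ as a left $R$-module, with each $M(c)$ a left $R_c$-module; since every $R_c$ is hereditary by hypothesis and $R = \bigoplus_c R_c$, the $R$-module $M$ has projective dimension at most one, so Lemma \ref{hom-dim-CM} (whose proof relies on Condition (A)) yields $\mathrm{pd}_\C(\C \otimes_R M) \leq 1$. For the leftmost term, I would apply Condition (B): by Remark \ref{UFP-vs-free-tensor} this condition descends to each unfactorisable bimodule $U(c,d)$, so each $U(c,d) \otimes_{R_c} M(c)$ is projective as a left $R_d$-module, and summing over $c < d$ shows that $U \otimes_R M$ is a projective left $R$-module. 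A second application of Lemma \ref{hom-dim-CM} then shows that $\C \otimes_R U \otimes_R M$ is projective over $\C$, and the short exact sequence forces $\mathrm{pd}_\C(M) \leq 1$, completing the proof.

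The main technical point I anticipate needing care is the bookkeeping of tensor products over the non-unital ring $R$: specifically, verifying that the isomorphism $(\C \otimes_R U \otimes_R \C) \otimes_\C M \cong \C \otimes_R U \otimes_R M$ holds for an arbitrary non-degenerate $M$, and that the tensored sequence remains exact on the left, which relies on $\C$ being flat as a right $\C$-module (Lemma \ref{tensor}(b)) together with the approximate-unit framework of Subsection \ref{approximate basics}. Once these manipulations are in place, the argument is a direct application of the Cuntz--Quillen style exact sequence, with Conditions (A) and (B) entering only to guarantee the two projective-dimension bounds above.
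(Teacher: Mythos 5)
Your proposal is correct and follows essentially the same route as the paper's proof: the short exact sequence \eqref{ses-M}, the identification $\Omega_R^1\C\otimes_\C M\cong \C\otimes_R(U\otimes_R M)$ via Proposition \ref{omega-of-tensor}, projectivity of the left term from Condition (B), and the bound on the middle term from Lemma \ref{hom-dim-CM}. The only cosmetic difference is that you argue the final dimension shift directly where the paper cites \cite[Lemma 9.1]{mitchell}.
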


\begin{proof}
Let $M$ be an arbitrary left $\C$-module. Consider the short exact sequence \eqref{ses-M}. The first term
\[\Omega_1^R \C\otimes_\C M\cong \C\otimes_R (U\otimes_R M)\]
is a projective left $\C$-module since $U\otimes_R M$ is a projective left $R$-module by (B). It follows from \cite[Lemma 9.1]{mitchell} that
\[\mathrm{p.dim}_\C(M) \le \mathrm{max}(\mathrm{p.dim}_\C(\C\otimes_R M), 1)\]
and this implies the claim by Lemma \ref{hom-dim-CM}.
\end{proof}

\subsection{Translation between the $k$-linear and the discrete case}
\label{translation}

In this subsection we discover some close relations between previously established results for discrete EI categories and for $k$-linear categories. In particular, we show that the conditions ($\mathrm{A}_d$) and ($\mathrm{B}_d$) for discrete categories and the conditions (A) and (B) for $k$-linear categories are equivalent under certain moderate assumptions, and hence complete the proof of Theorem \ref{hereditary-minimal}. In this subsection, let $k$ be a semisimple ring.

Part (a) of the following lemma is well-known.

\begin{lemma}\label{modules-over-group-ring}
Let $X$ be a left $G$-set.\\
(a) $kX$ is a projective left $kG$-module if and only if all stabilisers occuring in $X$ are $k^\times$-finite. \\
(b) $kX$ is a flat left $kG$-module if and only if all stabilisers occuring in $X$ are locally $k^\times$-finite, i.\ e., all their finitely generated subgroups are $k^\times$-finite.
\end{lemma}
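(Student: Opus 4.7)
The plan is to decompose $X$ into $G$-orbits and reduce both parts to the case of a single transitive $G$-set $G/H$. Writing $X=\bigsqcup_i G/H_i$, we have $kX=\bigoplus_i k[G/H_i]$, so $kX$ is projective (respectively flat) over $kG$ if and only if each $k[G/H_i]$ is, since both properties are preserved by direct sums and by passage to direct summands. The key identification throughout is $k[G/H]\cong kG\otimes_{kH}k$, where $k$ denotes the trivial left $kH$-module; this translates assertions about $H$ into the behaviour of $k$ viewed as a $kH$-module.

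For part (a), the \emph{if} direction uses that when $|H|$ is invertible in $k$, the element $e=\tfrac{1}{|H|}\sum_{h\in H}h\in kH$ is an idempotent satisfying $he=e$ for every $h\in H$, so $kH\cdot e\cong k$ is a projective $kH$-summand; inducing along $kH\hookrightarrow kG$ (which preserves projectivity since $kG$ is free as a right $kH$-module) yields $k[G/H]$ as a projective $kG$-module. For the converse, restriction along $kH\hookrightarrow kG$ preserves projectivity (again, by freeness of $kG$ as a right $kH$-module), and since the $H$-orbit decomposition of $G/H$ contains the singleton $\{H\}$ with stabiliser $H$, the trivial module $k$ is an $H$-summand of $k[G/H]\vert_{kH}$ and thus projective over $kH$. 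A splitting $s$ of the augmentation $\epsilon\colon kH\to k$ must satisfy $h\cdot s(1)=s(1)$ for every $h\in H$; writing $s(1)=\sum_g c_g g$, this $H$-invariance forces $c_g$ to be constant in $g$, so the finiteness of the support of $s(1)$ forces $H$ itself to be finite, and $\epsilon s(1)=1$ then forces $|H|$ to be invertible in $k$.

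For part (b), the \emph{if} direction writes $H=\bigcup_\alpha H_\alpha$ as the directed union of its finitely generated (and hence $k^\times$-finite) subgroups, giving $G/H=\colim_\alpha G/H_\alpha$ as $G$-sets and therefore $k[G/H]=\colim_\alpha k[G/H_\alpha]$; each term is projective by (a), and a filtered colimit of flat modules is flat. For the \emph{only if} direction, flatness is preserved by restriction across the tower $kG\supseteq kH\supseteq kH_0$ (because $kG$ is free over $kH$ and $kH$ is free over $kH_0$), so the same summand argument produces $k$ as a flat $kH_0$-module for every finitely generated $H_0\leqslant H$. Since $H_0$ is finitely generated, the augmentation ideal $I_{H_0}\subseteq kH_0$ is generated as a left $kH_0$-module by finitely many elements of the form $h_i-1$, making $k\cong kH_0/I_{H_0}$ finitely presented; the standard fact that a finitely presented flat module is projective then reduces the problem to part (a), which forces $H_0$ to be $k^\times$-finite. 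The main obstacle is this last step, combining the freeness-based restriction of flatness with the finite generation of $H_0$ to upgrade flatness to projectivity; the remaining pieces are essentially standard manipulations of group rings.
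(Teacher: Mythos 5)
Your proof is correct. Parts (a) and the ``if'' half of (b) follow essentially the same route as the paper: the averaging idempotent for sufficiency, the observation that an $H$-invariant element of finite support forces $H$ to be finite and $|H|$ to be invertible for necessity (the paper phrases this via a section of $kG\twoheadrightarrow k(G/H)$ rather than of the augmentation $kH\to k$, but the computation is identical), and the filtered-colimit argument for flatness. Where you genuinely diverge is the ``only if'' half of (b). The paper works entirely inside $kG$: for each $h\in H$ (resp.\ each finite tuple of elements) it tensors the left-multiplication map $\lambda_{h-1}$ with $k(G/H)$, extracts a nonzero kernel element to force finite order, and then runs an explicit computation with the norm element $N(h)=1+h+\dots+h^{m-1}$ to show $m$ is invertible. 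You instead restrict flatness down to a finitely generated subgroup $H_0\le H$, split off the trivial module $k$ as a flat $kH_0$-summand using the fixed coset $1H$, note that $k=kH_0/I_{H_0}$ is finitely presented because the augmentation ideal is generated as a left ideal by $h_i-1$ for generators $h_i$, and invoke the general theorem that finitely presented flat modules over an arbitrary ring are projective, reducing to part (a). Your route is shorter and makes the parallel between (a) and (b) transparent, at the cost of importing a nontrivial (though standard) black box; the paper's route is longer but entirely self-contained and elementary. One cosmetic point: for restriction of left modules to preserve projectivity (resp.\ flatness) the relevant fact is that $kG$ is free as a \emph{left} $kH$-module (resp.\ $kH_0$-module), not as a right module as you state --- both happen to be true here, so nothing breaks, but the justification as written points at the wrong side.
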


\begin{proof}
(a) Every $G$-set is a disjoint union of transitive $G$-sets, and a direct sum is projective if and only if each summand is. We may thus assume that $X=G/H$ for some subgroup $H$, and have to show that $k(G/H)$ is projective if and only if $H$ is $k^\times$-finite. For this, consider the canonical surjection
\[\pi\colon kG \twoheadrightarrow k(G/H)\,.\]
It has a section $s$ if and only if $k(G/H)$ is projective. If $s$ exists, then $s([1])$ has equal entries in all left $H$-cosets, hence $H$ is finite. Consequently, $\pi(s([1])$ is divisible by $|H|$ and it follows that $H$ is $k^\times$-finite. Finally, if $H$ is $k$-finite, then a section $s$ can be defined by
\[s([gH])=\frac 1{|H|}\sum_{h\in H} gh\,.\]
(b) As in the proof of (a), we may assume that $X=G/H$. Suppose that $H$ is locally $k^\times$-finite. Then $H$ is a filtered union of $k^\times$-finite subgroups $H_i$, thus $G/H$ is a filtered colimit of $G/H_i$ and $k(G/H)$ is a filtered colimit of projectives $k(G/H_i)$ and hence flat.

Now, suppose that $k(G/H)$ is flat. For $h \in H$, consider the map of right $kG$-modules $kG \rightarrow kG$ given by left multiplication $\lambda_{h-1}$ with $h-1$. It induces a non-injective map after tensoring with $k(G/H)$. Since $k(G/H)$ is flat, the original map has to have nontrivial kernel. Let $x=(x_g g)_{g\in G}$ be nonzero in the kernel. Let $F$ be the finite, nonempty set of elements $g$ with $x_g\neq 0$. Since $x=hx$, $F$ is invariant under left multiplication with $h$, i.\ e. the subgroup generated by $h$ acts on $F$. This action is free since $G$ is a group. It follows that $h$ has finite order, which we denote by $m$. Set $N(h)=1+h+h^2+...+h^{m-1} \in kG$. It is easily checked that
\[\mathrm{ker}(\lambda_{h-1})=\mathrm{im}(\lambda_{N(h)})\,.\]
By exactness, this remains true after tensoring with $k(G/H)$. But then $1H$ lies in the kernel, and it follows that there exist finitely many $k_i$ and $g_i$ such that
\[1H= N(h)\sum_{i=1}^n k_i g_iH = \sum_{i=1}^n\sum_{j=1}^m k_i h^j g_iH\,.\]
In the above double sum, we now focus on those summands  with $h^jg_iH=1H$. (All other summands cancel each other out.) These satisfy $h^jg_i\in H$ and thus $g_i\in H$. It follows that also all the other $h^{j'}g_i$ lie in $H$ and we have
\[1H = \sum_{\substack{i=1\\g_i\in H}}^n mk_i g_i H = \left( m \sum_{\substack{i=1\\g_i\in H}}^n k_i \right)\cdot 1H\,.\]
Thus, $m$ is invertible in $k$.

For several elements $h_1, \ldots, h_n$, consider similarly the map
\[kG \longrightarrow \bigoplus_{i=1}^n kG\] where the $i$-th component is given by left multiplication with $h_i-1$. Again, we find a nonzero $x$ in the kernel, and this time, all $h_i$ have to stabilise $F$ under right multiplication, i.\ e. $\langle h_1, \ldots, h_n\rangle$ acts on $F$ freely and thus it is a finite group. To show that it is $k^\times$-finite, run the same argument as above with the sum of all elements of the subgroup $\langle h_1, \ldots, h_n\rangle$ as norm element.
\end{proof}

\begin{lemma}\label{B-vs-Bd}
If $\C$ is a discrete EI category and $k$ is semisimple, then (B) and $(\mathrm{B}_d)$ are equivalent.
\end{lemma}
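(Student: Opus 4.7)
The plan is to reduce the statement to a single transitive biset summand of $\C(c,d)$ and to describe $k\C(c,d) \otimes_{kG_c} M$ explicitly in terms of an induced representation of $G_d$.

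First, decompose $\C(c,d) = \bigsqcup_i X_i$ as a disjoint union of transitive $(G_d, G_c)$-bisets $X_i = (G_d \times G_c^{\mathrm{op}})/H_i$. This gives
\[k\C(c,d) \otimes_{kG_c} M \cong \bigoplus_i \bigl(kX_i \otimes_{kG_c} M\bigr)\]
as left $kG_d$-modules. Since a direct sum is projective iff every summand is, both (B) and $(\mathrm{B}_d)$ reduce to statements about each transitive biset separately. Fix $X = (G_d \times G_c^{\mathrm{op}})/H$ with a base point $\alpha \in X$, and set $K = \pr_1(H) \le G_d$ and $L = \{h \in G_c : \alpha h = \alpha\} \le G_c$.

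The direction $(\mathrm{B}) \Rightarrow (\mathrm{B}_d)$ uses only the special case $M = k$ with trivial $G_c$-action: since the right $G_c$-orbits of $X$ are in natural bijection with $G_d / K$, one obtains $kX \otimes_{kG_c} k \cong k[G_d/K]$ as left $kG_d$-modules. Condition (B) forces this module to be projective over $kG_d$, and Lemma~\ref{modules-over-group-ring}(a) then implies that $K = \pr_1(H)$ is $k^\times$-finite, which is $(\mathrm{B}_d)$.

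For $(\mathrm{B}_d) \Rightarrow (\mathrm{B})$ the description of $kX \otimes_{kG_c} M$ must be pushed further: every right $G_c$-orbit of $X$ is isomorphic as a right $G_c$-set to $L\backslash G_c$. Choosing coset representatives for $G_d / K$ one obtains an isomorphism of left $kG_d$-modules
\[kX \otimes_{kG_c} M \cong \mathrm{Ind}_K^{G_d}\bigl(M_L^{\mathrm{tw}}\bigr),\]
where $M_L$ denotes the $L$-coinvariants of $M$ and the twisted $K$-action on $M_L$ is induced by any set-theoretic section of $\pr_1\colon H \twoheadrightarrow K$ (it is well-defined because $L$ acts trivially on $M_L$, so the freedom in choosing a section is absorbed). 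Now assuming $(\mathrm{B}_d)$, the group $K$ is $k^\times$-finite; semisimplicity of $k$ together with Maschke's theorem then makes $kK$ semisimple, so every $kK$-module, in particular $M_L^{\mathrm{tw}}$, is projective. Induction along $kK \to kG_d$ sends free to free and hence preserves projectivity, so $kX \otimes_{kG_c} M$ is projective over $kG_d$, establishing (B).

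The main technical point I expect to be the most delicate is the verification of the isomorphism $kX \otimes_{kG_c} M \cong \mathrm{Ind}_K^{G_d}(M_L^{\mathrm{tw}})$: one has to track carefully how the left $G_d$-action permutes the cosets $G_d/K$ while simultaneously twisting the coinvariants $M_L$ through the chosen section of $H \twoheadrightarrow K$, and check both well-definedness and compatibility of this twisted action with the $K$-action on induction.
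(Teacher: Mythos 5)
Your proposal is correct and follows essentially the same route as the paper: both directions reduce to a single transitive biset, the forward direction plugs in the trivial module $k$ to produce $k[G_d/\pr_1(H_i)]$ and invokes Lemma~\ref{modules-over-group-ring}(a), and the converse factors the induction through the $k^\times$-finite group $\pr_1(H_i)$ and applies Maschke's theorem there before inducing up to $G_d$. The only difference is cosmetic: the paper leaves the intermediate $k\pr_1(H_i)$-module in the abstract form $k((L\times G_c^{\mathrm{op}})/H_i)\otimes_{kG_c} M$, whereas you identify it explicitly as the twisted coinvariants $M_L^{\mathrm{tw}}$.
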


\begin{proof}
If (B) holds, take $M$ to be the trivial left $G_c$-module $k=k(G_c/G_c)$. We get that
\[k((G_d\times G_c^{\mathrm{op}})/H_i)\otimes_{kG_c} k(G_c/G_c) \cong k((G_d\times G_c^{\mathrm{op}})/H_i\times_{G_c}G_c/G_c)\cong k(G_d/\mathrm{pr}_1(H_i))\]
is a projective left $kG_d$-module, so $\mathrm{pr}_1(H_i)$ is $k^\times$-finite by Lemma \ref{modules-over-group-ring} (a).

Conversely, suppose that $(\mathrm{B}_d)$ holds. Let $M$ be an arbitrary left $kG_c$-module and $H_i$ a biset stabiliser for $\C(c,d)$. Let $L=\mathrm{pr}_1(H_i)$. Then $H_i$ is contained in $L\times G_c^{\mathrm{op}}$, so $(G_d\times G_c^{\mathrm{op}})/H_i$ can be written as $G_d\times_L ((L\times G_c^{\mathrm{op}})/H_i)$. It follows that \[\C(c,d)\otimes_{G_c} M \cong kG_d \otimes_{kL} \left[k((L\times G_c^{\mathrm{op}})/H_i)\otimes_{G_c} M\right]\,.\]
The term in square brackets is a left $kL$-module which is projective since $kL$ is semisimple by Maschke's theorem. Inducing it up to $kG_d$ yields a projective left $kG_d$-module.
\end{proof}

Finally, we cite the following result of Dicks, characterising hereditary group rings.

\begin{citedprop}[{\cite[Thm.~1]{dicks}}] \label{dicks} Let $k$ be an arbitrary ring and $G$ a group. Then $kG$ is  hereditary if and only if at least one of the following holds:

\begin{itemize}
    \item[(H1)] $k$ is completely reducible and $G$ is the fundamental group of a connected graph of $k^\times$-finite groups.
    \item[(H2)] $k$ is (left) $\aleph_0$-Noetherian and von Neumann regular, and $G$ is countable and locally $k^\times$-finite.
    \item[(H3)] $k$ is hereditary and $G$ is $k^\times$-finite.
\end{itemize}
\end{citedprop}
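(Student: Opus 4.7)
The plan is to prove the ``if'' direction case by case, and then tackle the harder ``only if'' direction.

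For (H3), assume $G$ is $k^\times$-finite and $k$ is hereditary. The normalised trace makes $kG$ into a separable $k$-algebra: the multiplication $kG\otimes_k kG \to kG$ has a $(kG,kG)$-bilinear section $1\mapsto \frac{1}{|G|}\sum_{g\in G} g\otimes g^{-1}$. Separable extensions of hereditary rings are hereditary, for instance by comparing $\mathrm{Ext}$-groups over $kG$ with those over $k$ via a change-of-rings argument. For (H1), assume $k$ is completely reducible and $G$ is the fundamental group of a connected graph of groups with $k^\times$-finite vertex groups. Bass--Serre theory provides a tree $T$ on which $G$ acts with vertex and edge stabilisers among those vertex groups, yielding a short exact sequence of $kG$-modules
\[0\longrightarrow kE(T)\longrightarrow kV(T)\longrightarrow k \longrightarrow 0\,.\]
The first two terms split as direct sums of $kG\otimes_{kG_e} k$ and $kG\otimes_{kG_v} k$; by Lemma~\ref{modules-over-group-ring}(a), each summand is a projective $kG$-module, so the trivial module $k$ has projective dimension at most $1$ over $kG$. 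Since $k$ is semisimple, any $kG$-module $M$ fits into a short exact sequence whose pieces, under restriction and induction, reduce to the trivial-module case, whence $\mathrm{p.dim}_{kG}(M)\le 1$. For (H2), write the countable, locally $k^\times$-finite $G$ as a filtered union $G=\bigcup_n G_n$ of $k^\times$-finite subgroups. By Maschke each $kG_n$ is semisimple, so $kG = \mathrm{colim}\, kG_n$ is a filtered colimit of semisimple rings; combined with the $\aleph_0$-Noetherian and von Neumann regular hypotheses on $k$, standard colimit arguments upgrade flatness-of-submodules statements to hereditariness of $kG$.

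For the converse, suppose $kG$ is hereditary. The strategy is to analyse $G$ torsion-element-by-torsion-element and use the structure theory of groups acting on trees. First, if $G$ contains a torsion element $h$ of order $m$, then the kernel of left multiplication by $h-1$ on $kG$ equals the image of multiplication by the norm element $N(h)=1+h+\cdots+h^{m-1}$; tracking this through a projective resolution (and using projectivity of kernels of maps between projectives) forces $m$ to be invertible in $k$, and more generally that every finitely generated torsion subgroup is $k^\times$-finite. This is essentially the argument extracted in Lemma~\ref{modules-over-group-ring}(b), but applied internally to $kG$ itself. Next, one distinguishes according to whether $G$ is itself locally finite, in which case a countability reduction (using that every element of $kG$ has finite support) together with the local $k^\times$-finiteness puts us in case (H2) or (H3); or whether $G$ contains a free subgroup, in which case Bass--Serre / Dunwoody theory of groups of cohomological dimension $\le 1$ over $k$ forces $G$ to split as the fundamental group of a graph of $k^\times$-finite groups, giving (H1).

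The main obstacle is precisely this last trichotomy in the converse: having ruled out infinite elements of $G$ which are not $k^\times$-torsion in a compatible sense, one must produce a geometric splitting of $G$ from the purely homological hypothesis that the trivial module has projective dimension $\le 1$. This is the deepest step and requires Dunwoody-type accessibility results together with a careful induction on the type of finitely generated subgroup of $G$, which is why the classification splits into the three parallel cases on $k$ as well; a uniform argument does not seem to be available.
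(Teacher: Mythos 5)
You should first note that the paper does not prove this statement at all: it is quoted verbatim as \cite[Thm.~1]{dicks} and used as a black box (see the sentence ``Finally, we cite the following result of Dicks\dots'' preceding Proposition~\ref{dicks}). So there is no internal proof to compare against; what you have written is an attempt to reprove a substantial 1979 theorem of Dicks, which in turn rests on Dunwoody's accessibility theorem for groups of cohomological dimension one.

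Judged on its own terms, your sketch of the ``if'' direction is broadly on the right track for (H1) and (H3): the separability idempotent $\frac{1}{|G|}\sum_g g\otimes g^{-1}$ does give $\gldim kG\le\gldim k$ in case (H3), and in case (H1) the augmented chain complex of the Bass--Serre tree, tensored over $k$ with an arbitrary module $M$ carrying the diagonal action, exhibits $M$ as a quotient of modules induced from $k^\times$-finite stabilisers, which are projective because those stabilisers have semisimple group rings; your phrase ``under restriction and induction reduce to the trivial-module case'' papers over exactly this tensoring step, which you should make explicit. Case (H2) is only gestured at: von Neumann regularity of $kG$ gives that all modules are flat, but passing from ``flat'' to ``projective dimension $\le 1$'' genuinely requires the $\aleph_0$-Noetherian hypothesis and a countable-generation argument, and ``standard colimit arguments upgrade flatness'' is not a proof. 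The serious gap, however, is the converse. Your norm-element argument (essentially Lemma~\ref{modules-over-group-ring}(b) run internally) does show that finite subgroups must be $k^\times$-finite, but the entire mathematical content of the ``only if'' direction is the production of the graph-of-groups splitting of $G$ from the hypothesis $\gldim kG\le 1$, and you delegate this wholesale to ``Bass--Serre / Dunwoody theory'' without proof. That step is Dunwoody's accessibility theorem, which is at least as deep as the statement being proved; invoking it unproved means the proposal is a roadmap through the literature rather than a proof. (Your trichotomy ``locally finite versus contains a free subgroup'' also needs care --- the relevant dichotomy is locally finite versus containing an element of infinite order, and the three cases on $k$ interact with it nontrivially.) Since the paper only ever uses this proposition as a citation, the honest course here is to do the same rather than to reprove it.
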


The conditions on the ring $k$ are explained on the first page of \cite{dicks}.

\begin{rema}\label{dicks-field}
If $k$ is a field, then all conditions on $k$ hold trivially. Thus, $kG$ is hereditary if and only if $G$ is the fundamental group of a connected graph of $k^\times$-finite groups, or is countable and locally $k^\times$-finite.
\end{rema}

We complete the proof of Theorem \ref{hereditary-minimal} and Corollary \ref{two-sided hereditary} in the following proposition.

\begin{prop}
Let $\C$ be a discrete EI category with the unique factorisation property UFP such that all group rings are hereditary. If Conditions $\mathrm{(A_d)}$ and $\mathrm{(B_d)}$ hold, then the category of left $\C$-modules is hereditary.
\end{prop}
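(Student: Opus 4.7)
The plan is to apply Theorem~\ref{stronger result} to the $k$-linearisation of $\C$. Since $\C$ satisfies the UFP, Remark~\ref{UFP-vs-free-tensor} identifies $k\C$ with the free tensor category $\mathcal{T}_k(X, \mathcal{U})$, where (after passing to a skeleton) $X$ is the set of objects, $R_x = kG_x$, and $U(x, y)$ is the $k$-span of the unfactorisable morphisms from $x$ to $y$, viewed as a $(kG_y, kG_x)$-bimodule. The endomorphism algebras $kG_x$ are hereditary by assumption, so only Conditions (A) and (B) for $\mathcal{T}_k(X, \mathcal{U})$ remain to be established; by the `if and only if' clause of Remark~\ref{UFP-vs-free-tensor}, it suffices to verify these for each bimodule $U(x, y)$ with $x<y$.

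For Condition (B), I observe that the set of unfactorisable morphisms $x \to y$ is a sub-$(G_y, G_x)$-biset of $\C(x, y)$, so its biset stabilisers form a subset of those appearing in $\C(x, y)$. Hence $(\mathrm{B}_d)$, combined with the proof of Lemma~\ref{B-vs-Bd} applied to this sub-biset (which uses that $k$ is semisimple, as assumed in this subsection), yields (B) for $U(x, y)$.

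For Condition (A), the plan is to use Lemma~\ref{modules-over-group-ring}(b). The right $G_x$-stabiliser of any element $\alpha \in U(x, y)$ embeds via $g \mapsto (1, g)$ into the biset stabiliser $\{(h, g) : h \alpha g^{-1} = \alpha\} \subseteq G_y \times G_x^{\mathrm{op}}$. Since subgroups of locally $k^\times$-finite groups are themselves locally $k^\times$-finite, $(\mathrm{A}_d)$ implies that every right $G_x$-stabiliser in $U(x, y)$ is locally $k^\times$-finite. Lemma~\ref{modules-over-group-ring}(b) then asserts that $U(x, y)$ is flat as a right $kG_x$-module, i.\,e.\ (A) holds.

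With these conditions verified, Theorem~\ref{stronger result} gives that $\mathcal{T}_k(X, \mathcal{U}) \cong k\C$ is hereditary, which is equivalent to hereditarity of the left $\C$-module category as explained at the end of Subsection~\ref{approximate basics}. No step presents a serious obstacle once the dictionary between biset stabilisers and one-sided action stabilisers is in place; providing this dictionary is exactly the content of Lemmas~\ref{modules-over-group-ring} and~\ref{B-vs-Bd}, so the proof reduces to a short verification.
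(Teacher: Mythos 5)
Your proposal is correct and follows essentially the same route as the paper: identify $k\C$ with a free tensor category via Remark~\ref{UFP-vs-free-tensor}, translate $(\mathrm{A}_d)$ and $(\mathrm{B}_d)$ into (A) and (B) via Lemma~\ref{modules-over-group-ring}(b) and Lemma~\ref{B-vs-Bd}, and conclude with Theorem~\ref{stronger result}. The only difference is that you carry out the translation on the sub-bisets of unfactorisables $U(x,y)$ rather than on the full morphism bisets, which is a harmless (and arguably more careful) piece of bookkeeping, since the biset stabilisers of a sub-biset are among those of the ambient biset.
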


\begin{proof}
Remark \ref{UFP-vs-free-tensor} shows that the $k$-linearsation of $\C$ is a free tensor category over a certain tensor quiver. Part (b) of Lemma \ref{modules-over-group-ring} (respectively, Lemma \ref{B-vs-Bd}) tells us that Condition $\mathrm{(A_d)}$ (resp., Condition $\mathrm{(B_d)}$) for discrete EI categories $\C$ is equivalent to Condition (A) (resp., Condition (B)) for its $k$-linearisation. The conclusion then follows from Theorem \ref{stronger result}.
\end{proof}

\subsection{Bounded global dimension}\label{ANBN}

In this subsection, we assume that the rings $R_x$ at the objects of the tensor quiver $(X,\mathcal{U})$ might not be hereditary, but still have global dimension bounded by some natural number $N\ge 1$. Our argumentation still works in this case, and Conditions (A) and (B) can even slightly be weakened.

\begin{defi}
A module $K$ (in some abelian category) is called an \emph{$N$-th kernel} if there are projectives $P_1, \ldots P_N$ and a short exact sequence
\begin{equation}\label{Nth-kernel}
    0 \longrightarrow K \longrightarrow P_N \longrightarrow \ldots \longrightarrow P_1 \,.
\end{equation}
A $0$-th kernel is an arbitrary module by this definition. We also say that a $(-1)$-st, $(-2)$-nd kernel etc. is an arbitrary module.
\end{defi}

\begin{rema}
An abelian category with enough projectives has global dimension $\le N$ if and only if all $N$-th kernels are projective.
\end{rema}

Fix a number $N$ and consider the following conditions. We formulate these in terms of the $(R,R)$-bimodule $\C = \mathcal{T}_k(\mathcal{X}, \mathcal{U})$, but they could equivalently be formulated for the $(R_d,R_c)$-bimodules $\C(c,d)$, as we did with (A) and (B), or for the quiver bimodules $U(c,d)$.

\begin{itemize}
    \item[$(\mathrm{A}_N)$] For every $(N-1)$-st kernel $L$ in the category of left $R$-modules, we have \[\mathrm{Tor}_{R}^1(\C,L)=0\,.\]
    \item[$(\mathrm{B}_N)$] For every short exact sequence of left $R$-modules \[0\longrightarrow Q \longrightarrow P \longrightarrow L \longrightarrow 0\] where $Q$, $P$ are projective and $L$ is an $(N-1)$-st kernel, the induced sequence of left $R$-modules \[\C\otimes_{R} Q \longrightarrow \C \otimes_{R} P \longrightarrow \C\otimes_{R} L \longrightarrow 0\]
    splits, i.\ e. \[\C\otimes_{R} P \cong \C \otimes_{R} L\oplus \mathrm{im}(\C\otimes_{R} Q)\,.\]
    \item[(BP)] $\C$ is projective as a left $R$-module.
\end{itemize}

\begin{rema}
Let $N=1$. Then it can easily be seen that ($\mathrm{A}_1$) is equivalent to (A), and that the conjunction of ($\mathrm{B}_1$) and (BP) is equivalent to (B).
\end{rema}

Note that if (BP) holds, then $(\mathrm{B}_N)$ is equivalent to the fact that $\C\otimes_R L$ is a projective left $R$-module, i.\ e.\ that $\C(c,d)\otimes_{R_c} L$ is a projective left $R_d$-module for all $c<d$. Also, the following lemma is immediate:

\begin{lemma} \label{objectwise-projective} Let $(X,\mathcal{U})$ satisfy (BP).
If $P$ is a projective $k\C$-module, then $P(c)$ is a projective $R_c$-module for every $c\in \mathrm{Ob}(\C)$. If $M$ is of projective dimension at most $m$, then so is $M(c)$.\qed
\end{lemma}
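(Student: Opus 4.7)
The plan is a straightforward combination of (BP) with the exactness of the evaluation functor. First, I would note that for any object $c \in \mathrm{Ob}(\C)$, the evaluation functor $\mathrm{ev}_c \colon M \mapsto M(c)$ from $k\C$-modules to $R_c$-modules is exact, since kernels and cokernels of natural transformations between functors $\C \to k\text{-}\mathrm{Mod}$ are computed pointwise. The heart of the argument is then to show that $\mathrm{ev}_c$ carries projective $k\C$-modules to projective $R_c$-modules.

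To this end, I would identify the standard projective generators of the category of $k\C$-modules as the representable modules $k\C \cdot e_d$, which as functors are $\C(d,-)$ for each $d \in \mathrm{Ob}(\C)$; their projectivity follows from the Yoneda isomorphism $\Hom_{k\C}(k\C \cdot e_d, M) \cong M(d)$ together with the exactness just observed. Now $(k\C \cdot e_d)(c) = \C(d,c)$, and assumption (BP) says that $\C$ is projective as a left $R$-module, which decomposes objectwise into the statement that each $\C(d,c)$ is projective as a left $R_c$-module. Hence $\mathrm{ev}_c$ sends every standard projective to a projective $R_c$-module; since it commutes with arbitrary direct sums and with passage to direct summands, and since every projective $k\C$-module is a direct summand of a direct sum of representables, the first assertion follows.

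For the second assertion, I would take a projective resolution
\begin{equation*}
0 \longrightarrow P_m \longrightarrow P_{m-1} \longrightarrow \cdots \longrightarrow P_0 \longrightarrow M \longrightarrow 0
\end{equation*}
of length at most $m$ (which exists by the definition of projective dimension), and apply $\mathrm{ev}_c$. Since $\mathrm{ev}_c$ is exact and each $P_i(c)$ is a projective $R_c$-module by the first part, the resulting sequence
\begin{equation*}
0 \longrightarrow P_m(c) \longrightarrow P_{m-1}(c) \longrightarrow \cdots \longrightarrow P_0(c) \longrightarrow M(c) \longrightarrow 0
\end{equation*}
is a projective resolution of $M(c)$ of length at most $m$ over $R_c$, giving the desired bound on the projective dimension of $M(c)$.

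There is no genuine obstacle in this argument; the only step that actually uses (BP) is the reduction of objectwise projectivity of the representables to the fact that $\C(d,c)$ is $R_c$-projective, which is unwinding definitions. In fact one sees that the converse of the first assertion of the lemma also essentially holds: were (BP) to fail, some representable $k\C \cdot e_d$ would already be a projective $k\C$-module whose value at some $c$ is not $R_c$-projective.
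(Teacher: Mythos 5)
Your proof is correct and is exactly the standard argument that the paper leaves implicit: the lemma is stated there with no proof at all, being declared immediate, and the intended reasoning is precisely your combination of the exactness of evaluation, the Yoneda projectivity of the representables $k\C\cdot e_d$, the objectwise decomposition of (BP) into $R_c$-projectivity of each $\C(d,c)$, and closure of projectives under direct sums and summands. Your observation that (BP) is also necessary for the first assertion is accurate, since it is just the case $P = k\C\cdot e_d$.
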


We now prove the analogue of Theorem \ref{stronger result}, which implies Theorem \ref{bound gldim} from the introduction:

\begin{thm} \label{sufficiency-ANBN}
Let $N\ge 1$, and let $(X,\mathcal{U})$ be a tensor quiver with $\gldim R_x\le N$ for all $x\in X$, satisfying conditions $(\mathrm{A}_N)$, $(\mathrm{B}_N)$ and (BP). Then
\[\gldim \mathcal{T}_k(X,\mathcal{U}) \le N\,.\]
\end{thm}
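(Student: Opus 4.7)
The plan is to extend the proof of Theorem \ref{stronger result} (the $N=1$ case) to general $N$. Let $M$ be an arbitrary left $\C$-module. Applying Propositions \ref{cuntz-quillen} and \ref{omega-of-tensor} I obtain the Cuntz--Quillen short exact sequence
\[0 \longrightarrow \C \otimes_R U \otimes_R M \longrightarrow \C \otimes_R M \longrightarrow M \longrightarrow 0,\]
from which \cite[Lemma~9.1]{mitchell} gives
\[\mathrm{p.dim}_\C(M) \leq \max\bigl(\mathrm{p.dim}_\C(\C \otimes_R U \otimes_R M) + 1,\ \mathrm{p.dim}_\C(\C \otimes_R M)\bigr).\]
Thus the plan reduces to establishing the two estimates
\[\mathrm{p.dim}_\C(\C \otimes_R M) \leq N \qquad \text{and} \qquad \mathrm{p.dim}_\C(\C \otimes_R U \otimes_R M) \leq N - 1.\]

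For the first estimate, I prove a generalisation of Lemma \ref{hom-dim-CM} stating that $\mathrm{p.dim}_\C(\C \otimes_R L) \leq \mathrm{p.dim}_R L$ for any left $R$-module $L$, and apply it to $L = M$ using $\mathrm{p.dim}_R M \leq \gldim R \leq N$. The argument proceeds by taking an $R$-projective resolution $Q_\bullet \to L$ of length at most $N$ and constructing a projective $\C$-resolution of $\C \otimes_R L$ of the same length by suitably modifying $\C \otimes_R Q_\bullet$: condition $(\mathrm{A}_N)$ guarantees that $\mathrm{Tor}^j_R(\C, L) = 0$ for $j \geq N$ (since the $(j-1)$-st syzygy is an $(N-1)$-st kernel whenever $j-1 \geq N-1$, by truncation), and condition $(\mathrm{B}_N)$ together with $(\mathrm{BP})$ provides splittings that absorb the possibly nonzero lower-degree Tor contributions while preserving the length of the resolution.

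For the second estimate, $(\mathrm{BP})$ ensures that $U$, being a direct summand of $\C$ as a graded left $R$-module, is projective as a left $R$-module; this allows one to show that $\C \otimes_R U \otimes_R M$ sits one level deeper in the Cuntz--Quillen tower than $\C \otimes_R M$, yielding the improvement by one via essentially the same kind of surgery argument as in the first step. In particular, for $N=1$ this recovers Theorem \ref{stronger result}, where the left term is actually projective.

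The main technical obstacle is the inductive construction in the proof of the first estimate. Without the extra conditions $(\mathrm{A}_N)$, $(\mathrm{B}_N)$, and $(\mathrm{BP})$, the general machinery would only give the bound $\gldim \C \leq N + 1$ of \cite[Thm.~2.2.11]{hazewinkel-gubareni-kirichenko}; decreasing this bound by one is exactly the purpose of these three conditions, and the delicate part of the proof is verifying that their combined use truly yields length-$N$ rather than length-$(N+1)$ projective resolutions.
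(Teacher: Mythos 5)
Your overall architecture (Cuntz--Quillen sequence plus Mitchell's Lemma 9.1) matches the paper's, but there is a genuine gap: you apply the argument to an \emph{arbitrary} module $M$, whereas the hypotheses $(\mathrm{A}_N)$ and $(\mathrm{B}_N)$ are, by their very formulation, only assumptions about $(N-1)$-st kernels. The missing idea is the reduction recorded in the remark after the definition of $N$-th kernels: $\gldim \C\le N$ if and only if every $N$-th kernel $K$ is projective, which (via Schanuel applied to $0\to K\to P_N\to L\to 0$) is equivalent to showing that the $(N-1)$-st kernel $L=\mathrm{coker}(K\to P_N)$ has projective dimension at most $1$ over $\C$. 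Once one works with such an $L$, everything collapses to the $N=1$ argument: by (BP) and Lemma \ref{objectwise-projective}, $L$ is objectwise an $(N-1)$-st kernel over $R$, hence $\mathrm{p.dim}_R L\le 1$ since $\gldim R\le N$; the resulting two-term resolution stays exact after $\C\otimes_R-$ because $(\mathrm{A}_N)$ gives exactly $\Tor_R^1(\C,L)=0$ for this class of $L$, so $\mathrm{p.dim}_\C(\C\otimes_R L)\le 1$; and $U\otimes_R L$ is $R$-projective by $(\mathrm{B}_N)$ and (BP), so the left-hand term of the Cuntz--Quillen sequence is projective. No ``surgery'' or absorption of intermediate Tor groups is ever needed.

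By contrast, your two estimates for arbitrary $M$ are not established and I do not see how to establish them from the stated hypotheses. For the first, $(\mathrm{A}_N)$ controls only $\Tor_R^j(\C,M)$ for $j\ge N$; the groups $\Tor_R^j(\C,M)$ for $1\le j\le N-1$ can be nonzero, they appear as homology of the tensored resolution, and they are $\C$-modules of unknown projective dimension. The claim that $(\mathrm{B}_N)$ and (BP) ``provide splittings that absorb'' them is not an argument: $(\mathrm{B}_N)$ asserts a splitting of $\C\otimes_R Q\to\C\otimes_R P\to\C\otimes_R L\to 0$ only when $L$ is an $(N-1)$-st kernel, and says nothing about the interior of a length-$N$ resolution of a general module. (Note that $\mathrm{p.dim}_\C(\C\otimes_R M)\le N$ is a \emph{consequence} of the theorem, so proving it first for all $M$ is at least as hard as the theorem itself.) For the second estimate, (BP) does give that $U$ is $R$-projective on the left, but that yields no bound of the form $\mathrm{p.dim}_\C(\C\otimes_R U\otimes_R M)\le N-1$ for arbitrary $M$; the only projectivity statement available for $U\otimes_R-$ is again restricted to $(N-1)$-st kernels. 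I would rewrite the proof around the kernel reduction rather than trying to repair these two estimates.
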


\begin{proof}
Let $K$ be an $N$-th kernel, and let $L$ be the cokernel of $K\rightarrow P_N$, with $P_N$ as in \eqref{Nth-kernel}. Then $L$ is an $(N-1)$-st kernel, and if we show that $L$ has projective dimension at most $1$ as a left $\C$-module, then $K$ is projective by Schanuel's Lemma.

Proposition \ref{cuntz-quillen} and Lemma \ref{ses-M} still hold as above, yielding a short exact sequence
\[ 0 \longrightarrow \Omega_R^1\C \otimes_\C L \longrightarrow \C\otimes_R L \longrightarrow L \longrightarrow 0\,.\]
The middle term has projective dimension $1$: Since the global dimension of $R$ is at most $N$, we know that $L$ has projective dimension at most $1$ as a left $R$-module, so there is a short exact sequence
\[0\longrightarrow Q \longrightarrow P \longrightarrow L\longrightarrow 0\]
of left $R$-modules, and this stays exact after applying $\C\otimes_R -$ by $(\mathrm{A}_N)$. Since Proposition \ref{omega-of-tensor} still applies, the left term is isomorphic to $\C\otimes_R (U\otimes_R L)$. Now, $U\otimes_R L$ is a projective left $R$-module by $(\mathrm{B}_N)$ and (BP), so the first term is projective as well. The proof is finished as the proof of Theorem \ref{stronger result} result above.
\end{proof}

\section{Applications}
\label{applications}

In this section, we apply Theorem \ref{hereditary-minimal} to some examples. These applications were the original interest of the authors to write this paper. In particular, we treat three different categories which shed light on the system of finite subgroups of a given group $G$ and its $G$-structure given by the conjugation action: the transporter category of the subgroup poset, the orbit category,  and the Quillen category. We might consider all finite subgroups of $G$, or single out a certain subsystem.

Throughout, $k$ is a field, $G$ is a discrete group, finite or infinite, and $\F$ is a family of finite subgroups in the following sense:

\begin{defi} A family of finite subgroups of $G$ is a set $\F$ of finite subgroups of $G$ which is non-empty, and closed under conjugation as  well as under passage to subgroups.
\end{defi}

\begin{rema}
In particular, according to our definition, every family contains the trivial subgroup $\{1\}$, and this is heavily used in our discussion. In the literature, orbit and Quillen categories are sometimes considered for arbitrary sets $S$ of subgroups instead of a family $\F$. We restricted to families for simplicity, although Theorem \ref{hereditary-minimal} can of course also be applied in the more general case.
\end{rema}

\subsection{Transporter categories}
\label{transporter categories}

Let $G$ be a group and $\P$ a $G$-poset, i.\ e. a poset which has an action of $G$ through poset-automorphisms. The following definition is essentially taken from \cite{xu-transporter}.

\begin{defi}
The transporter category of $G$ on $\P$, $\P\rtimes G$, has object set $\P$ and
\[\mathrm{Hom}_{\P\rtimes G}(x,y) = \mathrm{Trans}_G(x,y)=\{g \in G; gx\le y\}\,.\]
Composition is given by multiplication in $G$.
\end{defi}

It is easily seen that $\P\rtimes G$ is an EI category if and only if the following condition holds:
\begin{equation}\tag{S}\label{S}
\parbox{\dimexpr\linewidth-4em}{$\quad\quad\quad \quad\quad\quad\quad\quad\quad$ If $gx\le x$, then $gx=x$.}
\end{equation}

This is automatic for finite posets, but not in general, consider e.\ g. the addition action of $\mathbb{Z}$ on itself with the standard ordering. However, we will assume from now on that $\P$ satisfies \eqref{S}. In particular,
\[\mathrm{Hom}_{\P\rtimes G}(x,x)=\mathrm{Stab}_G(x)\]
-- this is actually another equivalent formulation of \eqref{S}.

\begin{lemma}
Condition ($\mathrm{B}_d$) holds if and only if
for all $x, y\in \P$ with $x<y$, $\mathrm{Stab}_G(x)\cap \mathrm{Stab}_G(y)$ is $k^\times$-finite. In this case, also Condition ($\mathrm{A}_d$) holds.
\end{lemma}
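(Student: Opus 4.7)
The plan is to reduce Condition $(\mathrm{B}_d)$ to the stated condition on pairwise intersections of stabilisers by computing the biset stabilisers in $\C(x,y)=\mathrm{Trans}_G(x,y)$ explicitly. A morphism $g\colon x\to y$ is an element $g\in G$ with $gx\le y$, and the $(G_y,G_x)$-biset structure is $(h,k)\cdot g = hgk$. Thus the biset stabiliser of $g$ is
\[
H_g \;=\; \{(h,k)\in G_y\times G_x \mid hgk=g\}.
\]
The identity $hgk=g$ forces $h=gk^{-1}g^{-1}$, so the first projection restricted to $H_g$ is injective, yielding
\[
H_g\;\cong\;\pr_1(H_g) \;=\; gG_xg^{-1}\cap G_y \;=\; \mathrm{Stab}_G(gx)\cap \mathrm{Stab}_G(y).
\]

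Next I would reconcile the two notions of inequality. If $x<y$ holds in the poset $\P$, then by \eqref{S} the points $x$ and $y$ must lie in distinct $G$-orbits: otherwise $y=gx$ would give $g^{-1}x\le x$, hence $g^{-1}x=x$ by \eqref{S}, whence $y=x$. So $x<y$ in $\P$ does represent a strict inequality of objects in the EI category $\P\rtimes G$. Conversely, for $c<d$ in the categorical sense and any $g\in \C(c,d)$, the point $gc$ lies in the $G$-orbit of $c$, which is disjoint from that of $d$, so $gc\ne d$ and hence $gc<d$ holds in $\P$.

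For the forward implication, assuming $(\mathrm{B}_d)$ and given $x<y$ in $\P$, the identity $e\in\C(x,y)$ is available and gives $\pr_1(H_e)=\mathrm{Stab}_G(x)\cap \mathrm{Stab}_G(y)$, which is $k^\times$-finite by $(\mathrm{B}_d)$. For the reverse, any biset stabiliser $H_g$ in $\C(c,d)$ with $c<d$ satisfies $\pr_1(H_g)=\mathrm{Stab}_G(gc)\cap\mathrm{Stab}_G(d)$, and the pair $gc<d$ in $\P$ falls under the hypothesis.

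Finally, whenever $(\mathrm{B}_d)$ holds, the isomorphism $H_g\cong \pr_1(H_g)$ shows that each biset stabiliser $H_g$ is itself $k^\times$-finite, hence a fortiori locally $k^\times$-finite, so $(\mathrm{A}_d)$ follows for free. I don't foresee any substantial obstacle; the key input is the injectivity of $\pr_1|_{H_g}$, which collapses both conditions $(\mathrm{A}_d)$ and $(\mathrm{B}_d)$ into a single transparent combinatorial condition on the $G$-action on $\P$.
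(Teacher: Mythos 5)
Your proof is correct and follows essentially the same route as the paper's: compute the biset stabiliser $H_g=\{(h,k)\mid hgk=g\}$, observe that $h=gk^{-1}g^{-1}$ forces $\pr_1|_{H_g}$ to be injective with image $\mathrm{Stab}_G(y)\cap\mathrm{Stab}_G(gx)$, and deduce both the equivalence for $(\mathrm{B}_d)$ and the implication to $(\mathrm{A}_d)$. The only difference is that you spell out the reconciliation between the poset order on $\P$ and the categorical order on objects (via condition \eqref{S}) and the choice of the identity morphism for the forward direction, details the paper leaves implicit.
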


\begin{proof}
Let $g\in \mathrm{Hom}_{\P\rtimes G}(x,y)$, then $gx< y$. The biset stabiliser of $g$ in $\mathrm{Stab}_G(y)\times \mathrm{Stab}_G(x)^{\mathrm{op}}$ is
\[H = \{(h,k); hgk=g\}=\{(h,k); h=gk^{-1}g^{-1}\}\,.\]
From this one sees that both projections $\mathrm{pr}_i$ are injective, in particular ($\mathrm{B}_d$) implies ($\mathrm{A}_d$), and \[\mathrm{pr}_1(H)=\mathrm{Stab}_G(y)\cap g \mathrm{Stab}_G(x)g^{-1} = \mathrm{Stab}_G(y)\cap  \mathrm{Stab}_G(gx)\,.\qedhere\]
\end{proof}

\begin{defi}
(a) A finite chain $x_1 < x_2 < \ldots < x_n$ is called saturated if for all $1\le i\le n-1$, there is no $z\in \P$ with $x_i<z<x_{i+1}$.\\
(b) A poset $\P$ is said to satisfy existence of saturated chains (ESC) if any two $x,y\in \P$ with $x<y$ can be joined by a saturated chain.\\
(b) A poset $\P$ is said to satisfy  uniqueness of saturated chains (USC) if the following holds: For any $x$ and $y$, there is at most one saturated chain starting in $x$ and ending in $y$.
\end{defi}

\begin{exam}
The following poset does not satisfy USC:
\begin{equation*}
\begin{tikzcd}[row sep=small]
& \bullet \arrow[rd] & \\
\bullet \arrow[ru] \arrow[rd] & & \bullet\\
 & \bullet \arrow[ru] &
\end{tikzcd}
\end{equation*}

The following poset satisfies USC:
\begin{equation*}
\begin{tikzcd}[row sep=large]
& & \bullet  & \\
\bullet \arrow[rru] \arrow[rrd] &  \bullet \arrow[ru]\arrow[rd]&\\
&  & \bullet
\end{tikzcd}
\end{equation*}
\end{exam}

\begin{lemma} \label{ESC,USC} (a) P satisfies ESC if and only if  $\P\rtimes G$ has the FFP.\\
(b) $\P$ satisfies USC if and only if  $\P\rtimes G$ has the UFP.
\end{lemma}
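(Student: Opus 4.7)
The key is a characterisation of unfactorisable morphisms in $\P\rtimes G$: a non-invertible morphism $g\colon x\to y$ is unfactorisable if and only if $gx \lessdot y$ in $\P$. For the 'if' direction, assuming $gx < z < y$ for some $z\in\P$, one factors $g = e\cdot g$ as $x \xrightarrow{g} z \xrightarrow{e} y$; both pieces are non-invertible because (S) implies that strictly comparable elements of $\P$ lie in different $G$-orbits (if $x < gx$ then $g^{-1}x < x$ contradicts (S)). For 'only if', any factorisation $g = h\cdot k$ with $k\colon x\to z$ and $h\colon z\to y$ both non-invertible yields $z' := k^{-1}z \in \P$ satisfying $x < z'$ (from $k$ non-invertible, since $kx\le z$ gives $x\le z'$, and $k^{-1}$ takes the different orbits of $x,z$ to those of $x,z'$) and $gz' = hz < y$ (from $h$ non-invertible), so $gx < gz' < y$.

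With this in hand, part (a) is mostly translation. For ESC $\Rightarrow$ FFP, a non-invertible $g\colon x\to y$ has $gx < y$, and any saturated chain $gx = z_0 \lessdot z_1 \lessdot \cdots \lessdot z_n = y$ supplies the factorisation $g = e\cdots e\cdot g$ into unfactorisables through $z_1,\ldots, z_{n-1}$. For FFP $\Rightarrow$ ESC, given $x < y$ in $\P$, decompose the non-invertible morphism $e\colon x \to y$ as $e = g_n \cdots g_1$ with $g_i\colon x_{i-1}\to x_i$ unfactorisable, so $g_i x_{i-1} \lessdot x_i$. Define $u_i := (g_i \cdots g_1)^{-1} x_i \in \P$; then $u_0 = x$, $u_n = y$ (using $g_n \cdots g_1 = e$), and applying the order-automorphism $(g_i \cdots g_1)^{-1}$ to the cover relation $x_i \lessdot g_{i+1}^{-1} x_{i+1}$ yields $u_i \lessdot u_{i+1}$, producing the desired saturated chain in $\P$.

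For part (b), an analogous ``push-forward'' trick connects chains with factorisations. For USC $\Rightarrow$ UFP, two factorisations $g = g_n \cdots g_1 = g'_m \cdots g'_1$ of a non-invertible morphism yield two saturated chains $gx \lessdot \cdots \lessdot y$ via $v_i := (g_n \cdots g_{i+1}) x_i$ and $v'_j := (g'_m \cdots g'_{j+1}) x'_j$; USC then forces $n = m$ and $v_i = v'_i$. The group elements $h_i := (g'_n \cdots g'_{i+1})^{-1}(g_n \cdots g_{i+1})$ satisfy $h_i x_i = x'_i$, hence provide the required isomorphisms $x_i \to x'_i$, and the compatibility relations of Definition~\ref{UFP} reduce to direct algebraic identities in $G$. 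Conversely, for UFP $\Rightarrow$ USC, two saturated chains from $x$ to $y$ yield two factorisations of $e\colon x\to y$ in which all $\alpha_i = \alpha'_i = e \in G$; the UFP equations $h_1\cdot e = e$, $e\cdot h_{n-1} = e$ and $e\cdot h_{i-1} = h_i\cdot e$ force $h_i = e$ for all $i$, and the existence of an iso $h_i = e\colon z_i \to z'_i$ in $\P\rtimes G$ is only possible when $z_i = z'_i$.

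The main technical obstacle is the FFP $\Rightarrow$ ESC direction: an arbitrary decomposition of $e\colon x \to y$ into unfactorisables lives in $\P\rtimes G$ via intermediate objects $x_i$ and non-identity group elements $g_i$ whose product is $e$, and the ``pull-back'' construction $u_i := (g_i \cdots g_1)^{-1} x_i$ is what realigns these objects along a genuine cover chain in the poset itself. The other three implications are then more routine translations between the two languages.
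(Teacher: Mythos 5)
Your proof is correct and follows essentially the same route as the paper's: both rest on identifying unfactorisable morphisms $g\colon x\to y$ with cover relations $gx\lessdot y$, and on transporting chains of morphisms to saturated chains in $\P$ by accumulating the group labels (your $v_i=(g_n\cdots g_{i+1})x_i$ is exactly the normalised ladder $x_i'=h_ix_i$ the paper constructs at the start of its proof). You fill in some steps the paper leaves as ``easily proved'', but the underlying argument is the same.
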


\begin{proof} Consider a chain of morphisms
\[x_0 \xrightarrow{g_1} x_1 \xrightarrow{g_2} x_2 \ldots \xrightarrow{g_n} x_n\,.\]
We manipulate this chain, using the equivalence relation presented in Definition \ref{UFP}: Set
$h_{n-1}=g_n$, $h_{n-2}=g_ng_{n-1}$ etc., and $x'_{i}=h_ix_i$. Then we have the following commutative ladder diagram:
\begin{equation*}
\begin{tikzcd}[row sep=large]
x_0 \arrow[r, "g_1"] \arrow[d, "\mathrm{id}_{x_0}"] & x_1 \arrow[r, "g_2"] \arrow[d, "h_1"] & x_2 \arrow[d, "h_2"] \arrow[r, "g_3"] & \ldots  \arrow[r, "g_{n-2}"] & x_{n-2}\arrow[d, "h_{n-2}"] \arrow[r,"g_{n-1}"] & x_{n-1} \arrow[r, "g_n"] \arrow[d,"h_{n-1}"] & \phantom{\,.}x_n\phantom{\,.} \arrow[d,"\mathrm{id}_{x_n}"]\\
x_0 \arrow[r, "h_0"] & x'_1 \arrow[r, "1"] & x'_2 \arrow[r,"1"] & \ldots \arrow[r, "1"] & x'_{n-2} \arrow[r,"1"] & x'_{n-1} \arrow[r,"1"] & \phantom{\,.} x_n\,.
\end{tikzcd}
\end{equation*}

Thus, every chain  of morphisms is equivalent to one with morphisms labelled by $1$, except possibly the first morphism. With this observation, it is easily proved that a non-isomorphism $x \xrightarrow y$ is unfactorisable if and only if the chain $x<y$ is saturated, and  part (a) can be deduced.

For part (b), assume that $\P$ satisfies USC. Consider two chains of unfactorisables with common start and end point and the same composition. By the above argumentation, we can assume they have the following forms:
\[x_0 \xrightarrow{g} x_1 \xrightarrow{1} x_2 \ldots \xrightarrow{1} x_n\,,\]
and
\[x_0=x'_0 \xrightarrow{g'} x'_1 \xrightarrow{1} x'_2 \ldots \xrightarrow{1} x'_m=x_n\,.\]
Since the compositions are equal, we have $g=g'$. So we have the following chains in $\P$:
\[gx_0 < x_1 < x_2 < \ldots < x_n\]
and
\[gx_0 < x'_1 < x'_2 < \ldots < x'_n\,.\]
These are saturated since they correspond to unfactorisable morphisms. Since $\P$ satisfies USC, we have $m=n$ and $x_i=x'_i$, so the original chains of morphisms are equal.

Now, assume $\P\rtimes G$ has the UFP. Consider two saturated chains
\[x_0 < x_1 < x_2 < \ldots < x_n\]
and
\[x_0=x'_0 < x'_1 < x'_2 < \ldots < x'_m=x_n\,.\]
These yield the two horizontal rows of morphisms in the following diagram with common composition $1\in \mathrm{Hom}_{\P\rtimes G}(x_0, x_n)$, so we can invoke the UFP to get $n=m$ and the vertical isomorphisms making the diagram commute:
\begin{equation*}
\begin{tikzcd}[row sep=large]
x_0 \arrow[r, "1"] \arrow[d, "\mathrm{id}_{x_0}"] & x_1 \arrow[r, "1"] \arrow[d, "h_1"] & x_2 \arrow[d, "h_2"] \arrow[r, "1"] & \ldots  \arrow[r, "1"] & x_{n-2}\arrow[d, "h_{n-2}"] \arrow[r,"1"] & x_{n-1} \arrow[r, "1"] \arrow[d,"h_{n-1}"] & \phantom{\,.}x_n\phantom{\,.} \arrow[d,"\mathrm{id}_{x_n}"]\\
x_0 \arrow[r, "1"] & x'_1 \arrow[r, "1"] & x'_2 \arrow[r,"1"] & \ldots \arrow[r, "1"] & x_{n-2} \arrow[r,"1"] & x'_{n-1} \arrow[r,"1"] & \phantom{\,.} x_n\,.
\end{tikzcd}
\end{equation*}
By the commutativity of the diagram, one gets inductively that all $h_i=1$ and, since the corresponding arrows are isomorphisms, $x_i=x'_i$.
\end{proof}

An application of Theorem \ref{hereditary-minimal} yields the following characterisation:

\begin{thm} \label{transporter-hereditary} Let $k$ be a field, $G$ a group and $\P$ a $G$-poset satisfying \eqref{S} and ESC. Then $k(\P\rtimes G)$ is hereditary if and only if the following conditions hold:
\begin{itemize}
    \item $\P$ satisfies USC,
    \item for $x\in \P$, the stabiliser $\mathrm{Stab}_G(x)$ is countable locally $k^\times$-finite or the fundamental group of a connected graph of $k^\times$-finite groups,
    \item for $x<y$, $\mathrm{Stab}_G(x)\cap \mathrm{Stab}_G(y)$ is $k^\times$-finite.
\end{itemize}
\end{thm}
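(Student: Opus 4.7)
The proof is essentially a straightforward application of Theorem \ref{hereditary-minimal} to $\C = \P \rtimes G$, combined with the translation lemmas already established in this subsection and with Dicks' characterisation of hereditary group rings.

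The plan is as follows. First I would observe that $\P \rtimes G$ is indeed a discrete EI category by hypothesis \eqref{S}, and that the assumption of ESC together with Lemma \ref{ESC,USC}(a) ensures that $\P \rtimes G$ satisfies the finite factorisation property FFP, which is the standing hypothesis of Theorem \ref{hereditary-minimal}. This puts us in a position to invoke Theorem \ref{hereditary-minimal}, according to which $k(\P \rtimes G)$ is hereditary if and only if (i) for each $x \in \P$, the group ring $k G_x = k \mathrm{Stab}_G(x)$ is hereditary, (ii) $\P\rtimes G$ satisfies the UFP, and (iii) conditions $(\mathrm{A}_d)$ and $(\mathrm{B}_d)$ hold.

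Next, I would translate each of these three conditions into the corresponding combinatorial/group-theoretic condition in the statement. Condition (ii) is identified with USC via Lemma \ref{ESC,USC}(b). Since $k$ is a field, Remark \ref{dicks-field} (which is a direct consequence of Dicks' Proposition \ref{dicks}) tells us that (i) is equivalent to saying that each $\mathrm{Stab}_G(x)$ is either countable locally $k^\times$-finite or the fundamental group of a connected graph of $k^\times$-finite groups. Finally, the lemma established immediately before Lemma \ref{ESC,USC} shows that $(\mathrm{B}_d)$ is equivalent to requiring that $\mathrm{Stab}_G(x) \cap \mathrm{Stab}_G(y)$ be $k^\times$-finite for every $x<y$, and that under this hypothesis $(\mathrm{A}_d)$ holds automatically; so (iii) is equivalent to the third bulleted condition in the theorem.

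There is no real obstacle here: every ingredient has been prepared earlier in the subsection and the argument is purely a matter of matching hypotheses to conclusions. The only minor point requiring care is to note that the endomorphism group of an object $x$ in $\P \rtimes G$ is precisely $\mathrm{Stab}_G(x)$, which is one of the equivalent reformulations of condition \eqref{S}; this justifies the identification of $kG_x$ with $k\mathrm{Stab}_G(x)$ and ensures that Dicks' theorem applies to the group attached to each object.
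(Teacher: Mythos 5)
Your proposal is correct and follows exactly the route the paper takes: the paper proves this theorem simply by citing it as "an application of Theorem \ref{hereditary-minimal}", with the three conditions translated via Lemma \ref{ESC,USC}, the preceding lemma on stabiliser intersections, and Remark \ref{dicks-field}, precisely as you describe. Nothing is missing.
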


If we are interested in the global dimension of the category of right $k(\P\rtimes G)$-modules, we have to check the corresponding conditions for $(\P\rtimes G)^{\mathrm{op}}=\P^{\mathrm{op}}\rtimes G$. But the conditions are obviously the same for $\le$ and $\ge$, so we get:

\begin{cor}
$k(\P\rtimes G)$ is left hereditary if and only if it is right hereditary.
\end{cor}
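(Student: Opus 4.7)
The plan is to exploit the fact that the opposite category of a transporter category is again a transporter category, so that the left-right symmetry for hereditarity is reduced to checking that the combinatorial and group-theoretic conditions appearing in Theorem~\ref{transporter-hereditary} are invariant under reversing the order on $\P$.

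First I would verify that there is a canonical isomorphism of categories
\[(\P\rtimes G)^{\mathrm{op}} \cong \P^{\mathrm{op}}\rtimes G\,,\]
given on objects by the identity and on morphisms by sending $g\in\mathrm{Trans}_G(x,y)=\{g\mid gx\le y\}$ to $g^{-1}\in\mathrm{Trans}_G(y,x)^{\mathrm{op}}$, observing that $gx\le y$ iff $y\le_{\mathrm{op}} gx$ iff $g^{-1}y\le_{\mathrm{op}} x$. In particular, the right $k(\P\rtimes G)$-module category is equivalent to the left $k(\P^{\mathrm{op}}\rtimes G)$-module category, so $k(\P\rtimes G)$ is right hereditary iff $k(\P^{\mathrm{op}}\rtimes G)$ is left hereditary.

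Next I would check that the standing hypotheses of Theorem~\ref{transporter-hereditary} are preserved under passage to $\P^{\mathrm{op}}$. Condition~\eqref{S} is visibly symmetric: $gx\le x$ iff $x\le_{\mathrm{op}} gx$, so the condition "$gx\le x\Rightarrow gx=x$" for $\P$ is equivalent to the same condition for $\P^{\mathrm{op}}$. For ESC, note that a saturated chain $x_1<x_2<\cdots<x_n$ in $\P$ is exactly a saturated chain $x_n<_{\mathrm{op}}\cdots<_{\mathrm{op}} x_1$ in $\P^{\mathrm{op}}$, so $\P$ satisfies ESC iff $\P^{\mathrm{op}}$ does.

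Finally, I would observe that each of the three characterising conditions in Theorem~\ref{transporter-hereditary} is symmetric under $\P\leftrightarrow\P^{\mathrm{op}}$: the USC condition is symmetric by the same chain-reversal argument as for ESC; the stabilisers $\mathrm{Stab}_G(x)$ do not depend on the direction of the order; and the intersection condition ranges over unordered pairs $\{x,y\}$ of comparable elements, since $x<y$ in $\P$ iff $y<x$ in $\P^{\mathrm{op}}$. Applying Theorem~\ref{transporter-hereditary} to both $\P$ and $\P^{\mathrm{op}}$ thus yields the same list of conditions, and the equivalence of left and right hereditarity follows. There is no real obstacle here; the argument is essentially a symmetry check, and the only thing to be slightly careful with is ensuring that the equivalence $(\P\rtimes G)^{\mathrm{op}}\cong \P^{\mathrm{op}}\rtimes G$ is compatible with the hypothesis \eqref{S} and ESC assumed in Theorem~\ref{transporter-hereditary}.
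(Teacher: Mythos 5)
Your proposal is correct and follows exactly the paper's route: identify $(\P\rtimes G)^{\mathrm{op}}$ with $\P^{\mathrm{op}}\rtimes G$ and observe that all hypotheses and conditions of Theorem~\ref{transporter-hereditary} are invariant under reversing the order on $\P$. The paper simply asserts this symmetry in one line, whereas you spell out the verification for \eqref{S}, ESC, USC and the stabiliser conditions; the content is the same.
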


\begin{cor} \label{transporter-subgroups-hereditary} Let $k$ be a field, $G$ a discrete group and $\F$ a family of finite subgroups, ordered by inclusion and equipped with the conjugation action of $G$. Then $k(\F\rtimes G)$ is hereditary if and only if
\begin{itemize}
\item $G$ is either countable locally $k^\times$-finite or the fundamental group of a connected graph of $k^\times$-finite groups,
\item all members of $\F$ are cyclic of prime power order, invertible in $k$, and their Weyl groups are $k^\times$-finite (except possibly for the Weyl group of $\{1\}$).
\end{itemize}
\end{cor}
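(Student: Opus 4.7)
The plan is to deduce Corollary~\ref{transporter-subgroups-hereditary} by specialising Theorem~\ref{transporter-hereditary} to the $G$-poset $\P=\F$ with conjugation action. First I would verify the standing hypotheses of that theorem: Condition~\eqref{S} holds because conjugation preserves cardinality, so $gHg^{-1}\le H$ with $H$ finite forces equality; ESC holds automatically because every $H'\in\F$ is finite, so any strict chain of subgroups inside $H'$ can be refined by inserting intermediate subgroups until it is saturated.

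The substantive step is to identify the USC condition on $\F$ with the statement that every member of $\F$ is cyclic of prime power order. Since $\F$ is closed under taking subgroups, for $H<H'$ in $\F$ every saturated chain from $H$ to $H'$ lies inside the subgroup lattice of $H'$, so USC on $\F$ reduces to USC on the subgroup lattice of each $H'\in\F$. In a finite poset, USC is equivalent to every interval being totally ordered: if an interval $[a,b]$ contains incomparable elements, extending them to maximal chains produces two distinct saturated chains from $a$ to $b$, while a chain trivially has a unique saturated refinement. Thus USC on $\F$ amounts to the subgroup lattice of every $H'\in\F$ being a chain, which by a classical result characterises exactly the finite cyclic groups of prime power order (such an $H'$ must be a $p$-group with a unique subgroup of order $p$, hence cyclic or generalised quaternion, and any generalised quaternion group has three maximal subgroups and therefore fails the chain condition).

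It then remains to translate the other two conditions of Theorem~\ref{transporter-hereditary}, using that the stabiliser of $H\in\F$ under conjugation is $N_G(H)$, and in particular $N_G(\{1\})=G$. Applying the intersection condition to the pair $\{1\}<H'$ with $H'\neq\{1\}$ forces $N_G(H')$ to be $k^\times$-finite; since $|N_G(H')|=|H'|\cdot|W_G(H')|$, this is equivalent to the conjunction of $|H'|$ being invertible in $k$ and $W_G(H')$ being $k^\times$-finite. Conversely, these two conditions imply the full intersection condition for every pair $H<H'$ (because $N_G(H)\cap N_G(H')\le N_G(H')$) and make the stabiliser condition at non-trivial $H'$ automatic (a $k^\times$-finite group is the fundamental group of a one-vertex graph of $k^\times$-finite groups). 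The stabiliser condition at $H=\{1\}$ becomes exactly the stated hypothesis on $G$. The hardest step will be the USC analysis above, which hinges on the classical characterisation of finite groups whose subgroup lattice is a chain as cyclic of prime power order; everything else is essentially bookkeeping on normalisers and orders.
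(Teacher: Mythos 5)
Your proposal is correct and follows essentially the same route as the paper: specialise Theorem~\ref{transporter-hereditary}, identify USC with the subgroup lattice of each member of $\F$ being a chain (hence cyclic of prime power order), and translate the stabiliser and intersection conditions into the stated conditions on $G$, $|K|$ and $W_G(K)$ via $\mathrm{Stab}_G(K)=N_G(K)$. The only cosmetic difference is that you pass through the classification of $p$-groups with a unique subgroup of order $p$ (excluding generalised quaternion groups), where the paper uses the more elementary induction that a finite group with a unique subgroup of each order, equivalently a unique maximal subgroup, is cyclic.
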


\begin{proof} \emph{The 'only if' part.}  Suppose that $k(\F\rtimes G)$ satisfies the conditions from Theorem \ref{transporter-hereditary}. Let $K\in \F$. The USC, applied to $\{1\}$ and $K$, implies that $K$ is a $p$-group for some $p$  and can have at most one subgroup of each order. This implies by an easy induction that $K$ is cyclic of prime power order. $G$ is the stabiliser of $\{1\}$ and thus in one of Dicks' classes. The third condition of Theorem \ref{transporter-hereditary}, applied to the inclusion $\{1\}\subseteq K$ for $K\neq \{1\}$, implies that $N_G(K)$ is $k^\times$-finite. Thus the same holds for its subgroup $K$ and its quotient $W_G(K)$.

\emph{The 'if' part.} The USC follows easily from the fact that a cyclic group has at most one subgroup of any order, and the other two items of Theorem  \ref{transporter-hereditary} are trivially satisfied.
\end{proof}

\subsection{Orbit categories}
\label{orbit categories}

In this subsection we consider another example, orbit categories, which are widely applied while considering actions of groups on topological spaces, for instance, Bredon's coefficient systems and homology theory, approximation of classifying spaces of groups, etc. The reader with interest can refer to \cite{blo},  \cite{jmo} and \cite{davis-lueck}. One conceptual reason for the importance of the orbit category stems from Elmendorf's Theorem \cite{elmendorf} relating the homotopy theory of $G$-spaces to the homotopy theory of presheaves over the orbit category.

\begin{defi}
The \emph{orbit category} $\mathrm{Or}(G,\F)$ of $G$ with respect to the family $\F$ has as objects the transitive $G$-sets $G/K$, $K\in \F$, and as morphisms the $G$-equivariant maps between these.
\end{defi}

\begin{rema}
We recall from \cite[Lemma~6.4.1]{SW} that for $K, L\in \F$, there is an isomorphism
\begin{align*}
\phi^{L,K}\colon K\backslash \mathrm{Trans}_G(L,K) &\cong \mathrm{Hom}_{\mathrm{Or}(G,\F)}(G/L, G/K), \\
g &\mapsto \phi^{L,K}(g)
\end{align*}
with
\[\mathrm{Trans}_G(L,K)=\{g \in G; \, g L g^{-1}\subseteq K\}\]
and
\[(\phi^{L,K}(g))(xL)=xg^{-1}K\] for $x \in G$. These isomorphisms are compatible with composition in the obvious way. In particular, since $\F$ consists of finite groups only, there is an isomorphism of monoids
\[\mathrm{Hom}_{\mathrm{Or}(G,\F)}(G/K, G/K)\cong K\backslash N_G(K) = W_G(K)\,.\]
Thus, $\mathrm{Or}(G,\F)$ is an EI category. Also, since any noninvertible morphism strictly increases the cardinality of the finite isotropy group $H$, $\mathrm{Or}(G,\F)$ has the FFP.
\end{rema}

If we set $L=\{1\}$ in the above, we have $\mathrm{Trans}_G(\{1\},K)=G$ and get the isomorphism
\[\phi^{\{1\},K} \colon K\backslash G \cong \mathrm{Hom}_{\mathrm{Or}(G,\F)}(G/\{1\}, G/K)\,,\]
where $K\backslash G$ is furnished with the $(W_G(K),G)$-biset structure given by left and right multiplication. The biset stabiliser $H_1$ of $\phi^{\{1\},K}(1K)$ thus equals
\[H_1=\{([g], g^{-1}); g\in N_G(K)\}\]
which is isomorphic to $N_G(K)^{\mathrm{op}}$ via $\mathrm{pr}_2$, while $\mathrm{pr}_1$ is surjective onto $W_G(K)$. Consequently, we have the following lemma.

\begin{lemma}\label{orbit-A-B} Let $K \in \F$ with $K\neq \{1\}$.\\
(a) If Condition ($\mathrm{A}_d$) is satisfied for the $(W_G(K),G)$-biset $\mathrm{Hom}_{\mathrm{Or}(G,\F)}(G/\{1\},G/K)$, then $K$ is $k^\times$-finite.\\
(b) If Condition ($\mathrm{B}_d$) is satisfied for the $(W_G(K),G)$-biset $\mathrm{Hom}_{\mathrm{Or}(G,\F)}(G/\{1\},G/K)$, then $W_G(K)$ is $k^\times$-finite.\qedhere
\end{lemma}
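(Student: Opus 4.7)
The plan is to exploit the explicit description of the biset stabiliser $H_1 \cong N_G(K)^{\mathrm{op}}$ sitting above the identity coset $1K$ and to feed this into the combinatorial conditions $(\mathrm{A}_d)$ and $(\mathrm{B}_d)$ directly. Both claims should follow essentially immediately from unpacking definitions, and the key observation is that the morphism set $\mathrm{Hom}_{\mathrm{Or}(G,\F)}(G/\{1\}, G/K)$ is highly non-trivial: it exhibits $N_G(K)$ itself (up to opposites) as a biset stabiliser, so the combinatorial conditions translate very cleanly into group-theoretic statements about $N_G(K)$ and $W_G(K)$.

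For part (a), I will use the isomorphism $\mathrm{pr}_2\colon H_1 \xrightarrow{\cong} N_G(K)^{\mathrm{op}}$, so $(\mathrm{A}_d)$ asserts that $N_G(K)$ is locally $k^\times$-finite. Since $K \in \F$ is a finite subgroup of $G$, and since $K \subseteq N_G(K)$ (as $K$ is always normal in $N_G(K)$), $K$ is a finitely generated subgroup of $N_G(K)$. Local $k^\times$-finiteness applied to this finitely generated subgroup then forces $K$ itself to be $k^\times$-finite, which is what we want.

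For part (b), I just invoke the fact already stated in the lemma's preamble that $\mathrm{pr}_1(H_1) = W_G(K)$. Condition $(\mathrm{B}_d)$ explicitly demands that $\mathrm{pr}_1$ of every biset stabiliser is $k^\times$-finite, so $W_G(K)$ must be $k^\times$-finite; no further work is needed.

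There is no real obstacle here beyond correctly tracing through the identifications given in the preceding paragraph (the explicit formula for $H_1$ in terms of $g \in N_G(K)$ and the identifications of $\mathrm{pr}_1, \mathrm{pr}_2$). The hypothesis $K \neq \{1\}$ is used to ensure that $\{1\} < K$ is a strict inequality of objects in the poset of isomorphism classes of $\mathrm{Or}(G,\F)$, so that Conditions $(\mathrm{A}_d)$ and $(\mathrm{B}_d)$, which are only assumed for strict comparisons $c < d$, actually apply to this particular hom-set.
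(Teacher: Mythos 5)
Your proposal is correct and is essentially the paper's own argument: the paper derives the lemma directly ("Consequently, ...") from the computation of the biset stabiliser $H_1=\{([g],g^{-1});\,g\in N_G(K)\}$, using that $\mathrm{pr}_2$ identifies $H_1$ with $N_G(K)^{\mathrm{op}}$ (so $(\mathrm{A}_d)$ gives local $k^\times$-finiteness of $N_G(K)$, hence $k^\times$-finiteness of the finite subgroup $K$) and that $\mathrm{pr}_1(H_1)=W_G(K)$ (so $(\mathrm{B}_d)$ gives $k^\times$-finiteness of $W_G(K)$). Your remark on the role of $K\neq\{1\}$ in making $G/\{1\}<G/K$ a strict comparison is also the right justification.
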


Finally, we cite a result characterising when the orbit category has the UFP. It is proved in a similar way as Proposition~\ref{quillen-UFP} below and enables us to prove Theorem~\ref{orbit-hereditary} from the introduction.

\begin{citedprop}[{\cite[Prop.~6.5.5]{SW}}]
 \label{orbit-UFP} Let $G$ be a group and $\F$ a family of finite subgroups. Then $\mathrm{Or}(G,\F)$ has the UFP if and only if all $K\in \F$ are cyclic of prime power order, where different primes may occur.
\end{citedprop}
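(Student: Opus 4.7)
The plan is to analyse factorisations in $\mathrm{Or}(G,\F)$ via the explicit description of morphisms as $K\backslash\mathrm{Trans}_G(L,K)$, together with the ladder-manipulation technique used in the proof of Lemma \ref{ESC,USC}. The core observation is that any decomposition of $\phi^{L,K}(g)$ into unfactorisables can, up to ladder equivalence, be brought into a normalised form in which all morphisms except the first are identity inclusions $\phi^{M,M'}(1)$ with $M\subseteq M'$; the content of a decomposition is then encoded in a saturated chain of subgroups inside $K$ together with a single leading element $h_0\equiv g\pmod{K}$.

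For the sufficiency direction, I would assume every $K\in\F$ is cyclic of prime power order $p^a$. After normalisation a decomposition takes the form
\[\phi^{L,\tilde M_1}(h_0)\;\circ\;\phi^{\tilde M_1,\tilde M_2}(1)\;\circ\;\cdots\;\circ\;\phi^{\tilde M_{n-1},K}(1),\]
where $\tilde M_1\subsetneq\cdots\subsetneq K$ is a saturated chain in the subgroup lattice of $K$. Because this lattice is totally ordered, the $\tilde M_i$ are uniquely determined by $n$, so two normalised decompositions of the same morphism have identical subgroup chains and differ only in the leading element, with $h'_0 h_0^{-1}\in K$. Using that $K$ is abelian and hence normalises each of its subgroups, the constant choice $k_i:=h'_0 h_0^{-1}\in K\subseteq N_G(\tilde M_i)$ yields isomorphisms $h_i=\phi^{\tilde M_i,\tilde M_i}(k_i)$ whose verification of the ladder axioms reduces to routine coset identities.

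For the necessity direction, I would fix $K\in\F$ and consider $\phi^{\{1\},K}(1)$. Since $\F$ is closed under subgroups, every maximal chain $\{1\}=L_0\subsetneq L_1\subsetneq\cdots\subsetneq L_n=K$ in the subgroup lattice of $K$ yields a decomposition of this morphism into unfactorisable inclusions. Applying the UFP to two such chains $(L_i)$ and $(L'_i)$ produces elements $k_i\in G$ with $k_i L_i k_i^{-1}=L'_i$ subject to the ladder constraints $k_1\in L'_1$, $k_i\in L'_i k_{i-1}$ for $2\le i\le n-1$, and $k_{n-1}\in K$. The decisive observation is that $k_1\in L'_1$ together with $L'_1=k_1 L_1 k_1^{-1}$ forces $L_1=L'_1$ and $k_1\in L_1$, since conjugating $L'_1$ by $k_1^{-1}\in L'_1$ returns $L'_1$ itself, while the same conjugation sends $L'_1$ back to $L_1$. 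An induction using $k_{i-1}\in L_{i-1}=L'_{i-1}\subseteq L'_i$ then propagates the identity $L_i=L'_i$ to every level, so any two maximal subgroup chains from $\{1\}$ to $K$ coincide. Hence the subgroup lattice of $K$ is totally ordered, whence $K$ is cyclic of prime power order by a standard group-theoretic argument: a group of order divisible by two distinct primes $p,q$ contains the incomparable subgroups $C_p$ and $C_q$ guaranteed by Cauchy, and a non-cyclic finite $p$-group contains two incomparable subgroups (two distinct subgroups of order $p$, or multiple incomparable subgroups of higher order in the generalised quaternion case).

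The step I expect to be the main obstacle is the normalisation in the sufficiency direction: one must verify that the ladder manipulation of the proof of Lemma \ref{ESC,USC} can be iterated so as to confine all morphisms except the first to identity inclusions, while simultaneously preserving unfactorisability. Once this is established, the abelianness of $K$ trivialises the remaining coset arithmetic for the sufficiency direction, and the necessity direction is driven entirely by the rigid implication $k_1\in L'_1\Rightarrow L_1=L'_1$ derived from the first ladder condition.
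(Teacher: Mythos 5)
Your proposal is correct and follows essentially the route the paper indicates for this cited result, namely the proof of Proposition~\ref{quillen-UFP}: normalise a factorisation by ladder equivalence so that all morphisms but the first become inclusions, then use the commutativity constraints of the UFP ladder to force two maximal subgroup chains to coincide. The only (harmless) deviations are that you read the ladder from the source end, which is the correct adaptation to $\mathrm{Hom}_{\mathrm{Or}(G,\F)}(G/L,G/K)=K\backslash\mathrm{Trans}_G(L,K)$ (the paper's Quillen-category argument reads it from the other end because there the quotient is by $C_G$ of the source), and that you deduce cyclicity of prime power order from total orderedness of the whole subgroup lattice rather than from uniqueness of the maximal subgroup plus a centre argument.
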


\begin{proof}[Proof of Theorem~\ref{orbit-hereditary}] If the two items are satisfied, all automorphism groups have hereditary group rings by Remark \ref{dicks-field} and $\mathrm{Or}(G,\F)$ has the UFP by Proposition~\ref{orbit-UFP}. Moreover, Conditions ($\mathrm{A}_d$) and ($\mathrm{B}_d$) are trivially satisfied: If $G/L < G/K$, and $L\neq \{1\}$, then $K$ is nontrivial, $H_i$ is a subgroup of the $k^\times$-finite group $W_G(K)\times W_G(L)$ and thus $k^\times$-finite. If $L=\{1\}$, then the biset stabilisers are all isomorphic to $N_G(K)$ by transitivity, and this is $k^\times$-finite since $K$ and $W_G(K)$ are.

Now, suppose that $k\mathrm{Or}(G,\F)$ is hereditary. Note that $G=W_G(1)$. By Remark~\ref{dicks-field}, if $k\mathrm{Or}(G,\F)$ is hereditary, then $G$ is either countable locally $k^\times$-finite or the fundamental group of a connected graph of $k^\times$-finite groups. By Lemma~\ref{orbit-A-B}~(b), all Weyl groups of nontrivial members of $\F$ are $k^\times$-finite. Finally, the members of $\F$ are cyclic of prime power order by Lemma \ref{orbit-UFP} and $k^\times$-finite by Lemma~\ref{orbit-A-B}~(a).
\end{proof}

\begin{exam} Suppose that $\F=\mathcal{FIN}$ is the family of all finite subgroups of $G$ and that $k\F$ is hereditary. If $G$ is locally $k^\times$-finite, then any two elements are contained in a finite, thus cyclic subgroup, and $G$ is abelian. Thus, $N_G(K)=G$ for an arbitrary $K$ and $G$ has to be $k^\times$-finite itself and thus cyclic.

On the other hand, if $G$ is the fundamental group of a connected graph of $k^\times$-finite groups, then one can prove that $G$ satisfies the above items if and only if this graph has trivial edge and loop groups. Contracting a spanning tree, we see that \emph{$G$ is a free product, finite or infinite, of finite groups $\mathbb{Z}/p_i^{k_i}$ where $p_i$ is a prime invertible in $k$, or $p_i=1$}.
\end{exam}

\begin{exam}
The groups $D_{\infty} \cong \mathbb{Z}/2 \ast \mathbb{Z}/2$ and $\mathrm{PSL}_2(\mathbb{Z})\cong \mathbb{Z}/2\ast \mathbb{Z}/3$ have a hereditary orbit category with respect to $\F=\mathcal{FIN}$.
\end{exam}

\begin{rema}
$G$ is the fundamental group of a \emph{finite} graph of finite groups if and only if it is virtually finitely generated free abelian \cite{kps}. In this case, we give a geometric characterisation of the Weyl group condition in Appendix~\ref{apx: trees}. It can be summarised as follows: If $F$ is a finite subgroup of $G$, then $W_G(F)$ is infinite if and only if $F$ fixes a ray (equivalently, a line) in the Bass-Serre tree, and there is a combinatorial algorithm how to read this off from the graph of groups.
\end{rema}

\begin{exam} The group $\SL_2(\mathbb{Z})\cong \mathbb{Z}/4\ast_{\mathbb{Z}/2}\mathbb{Z}/6$ has a nontrivial normal subgroup $\mathbb{Z}/2$ and thus the orbit category for any family containing this subgroup is not hereditary. However, the subgroup $\mathbb{Z}/3$ (canonically embedded via the second factor of the amalgam) has finite normaliser by Lemma~\ref{normalising-circle} and thus $\mathrm{Or}(\SL_2(\mathbb{Z}), \F_3)$ is hereditary where $\F_3$ denotes the family of subgroups which are finite $3$-groups.
\end{exam}

The discussion until here treated left $k\mathrm{Or}(G,\F)$-modules. Let us comment shortly on right $k\mathrm{Or}(G,\F)$-modules, i.\ e., left $k\mathrm{Or}(G,\F)^{\mathrm{op}}$-modules. Condition ($\mathrm{A}_d$), the UFP, and the Dicks condition for hereditarity of group rings are insensible when passing from a category to its opposite, but Condition ($\mathrm{B}_d$) a priori is not -- the two projections are interchanged. However, in the present case, the condition becomes stronger since the first projection ($\mathrm{pr}_2$ above) is an isomorphism in the case $L=\{1\}$. We thus get directly that $N_G(K)$ is $k^\times$-finite and consequently the Weyl group $W_G(K)$. We thus can prove the following in exactly the same way as Theorem~\ref{orbit-hereditary}:

\begin{cor} Let $k$ be a field, $G$ a discrete group and $\F$ a family of finite subgroups. Then $k\mathrm{Or}(G,\F)$ is right hereditary if and only if it is left hereditary, i.\ e., the conditions listed in Theorem~\ref{orbit-hereditary} hold.
\end{cor}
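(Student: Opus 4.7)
The plan is to apply Theorem~\ref{hereditary-minimal} to $\mathrm{Or}(G,\F)^{\mathrm{op}}$ and to compare the resulting hypotheses with the list in Theorem~\ref{orbit-hereditary}. As emphasised in the paragraph immediately preceding the corollary, the only condition occurring in Theorem~\ref{hereditary-minimal} that is not manifestly invariant under passage to the opposite category is $(\mathrm{B}_d)$: the UFP and Condition $(\mathrm{A}_d)$ are explicitly symmetric by the introduction, Dicks' criterion for hereditarity of $kW_G(K)$ depends only on the abstract isomorphism type of the automorphism group, and the FFP is symmetric because unfactorisability of a morphism $\alpha$ in $\C$ is equivalent to unfactorisability of $\alpha^{\mathrm{op}}$ in $\C^{\mathrm{op}}$ (a factorisation of one would yield a factorisation of the other by reversing composition).

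For the asymmetric condition, I would split according to whether the source object is $G/\{1\}$. If $L\neq \{1\}$, then under the hypotheses of Theorem~\ref{orbit-hereditary} both $W_G(K)$ and $W_G(L)$ are $k^\times$-finite, so any biset stabiliser in $\mathrm{Hom}_{\mathrm{Or}(G,\F)}(G/L, G/K)$ sits inside the $k^\times$-finite group $W_G(K)\times W_G(L)^{\mathrm{op}}$; both projections are then automatically $k^\times$-finite, and no new constraint arises from passing to the opposite. The remaining case is $L=\{1\}$, $K\neq \{1\}$, for which I would invoke the computation preceding Lemma~\ref{orbit-A-B}: the biset stabiliser is $H_1=\{([g], g^{-1}); g\in N_G(K)\}$, and $\pr_2$ furnishes an isomorphism onto $N_G(K)^{\mathrm{op}}$. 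Therefore Condition $(\mathrm{B}_d)$ for $\mathrm{Or}(G,\F)^{\mathrm{op}}$ amounts exactly to the $k^\times$-finiteness of $N_G(K)$ for every nontrivial $K\in \F$.

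To finish, I would use that $K$ is finite and $|N_G(K)|=|K|\cdot|W_G(K)|$, so $k^\times$-finiteness of $N_G(K)$ is equivalent to the joint $k^\times$-finiteness of $K$ and $W_G(K)$, which are precisely two of the clauses appearing in Theorem~\ref{orbit-hereditary}. Both directions of the corollary now follow: if the hypotheses of Theorem~\ref{orbit-hereditary} hold, then $(\mathrm{B}_d)$ for the opposite is satisfied, so $\mathrm{Or}(G,\F)^{\mathrm{op}}$ meets the hypotheses of Theorem~\ref{hereditary-minimal} and $k\mathrm{Or}(G,\F)$ is right hereditary; conversely, right hereditarity gives $(\mathrm{B}_d)$ for the opposite, which together with the symmetric conditions recovers the full list of Theorem~\ref{orbit-hereditary}, hence in particular left hereditarity. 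No deep obstacle appears in this argument; the only substantive step is the biset-stabiliser computation already carried out in the lead-up to Lemma~\ref{orbit-A-B}, and the rest is a careful verification of the symmetry of each clause.
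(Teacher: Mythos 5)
Your proposal is correct and follows essentially the same route as the paper: the paper likewise observes that only $(\mathrm{B}_d)$ is asymmetric, that for the opposite category the relevant projection $\pr_2$ is an isomorphism onto $N_G(K)^{\mathrm{op}}$ when $L=\{1\}$, and that $k^\times$-finiteness of $N_G(K)$ is equivalent to that of $K$ and $W_G(K)$ jointly, so the two lists of conditions coincide. Your explicit treatment of the case $L\neq\{1\}$ is a detail the paper leaves implicit, but the argument is the same.
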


\subsection{Quillen categories}
\label{quillen categories}

We now discuss Quillen categories (also called Frobenius categories) as another important example where Theorem \ref{hereditary-minimal} can be applied. These categories are important for the notion of $p$-local finite groups, and were used by Quillen to prove the stratification theorem of group cohomology. For more details, the reader can refer to \cite{blo}.

\begin{defi}
The \emph{Quillen category} $\mathcal{Q}(G,\F)$ of $G$ with respect to $\F$ has the members of $\F$ as objects and \[\mathrm{Hom}_{\mathcal{Q}(G,\F)}(H,K)= \mathrm{Trans}_G(H,K)/C_G(H)\,.\]
\end{defi}

One sees in exactly the same way as for the orbit category that since $\F$ consists of finite groups, $\mathcal{Q}(G,\F)$ is an EI category with the FFP.

In contrast with the orbit category, the Quillen category has finite Hom sets, even if $G$ is infinite, since mapping $g$ to conjugation $c(g)$ by $g$ embeds $\mathrm{Hom}_{\mathcal{Q}(G,\F)}(H,K)$ into $\mathrm{Hom}_{\mathrm{Grp}}(H,K)$.

In the important special case that $G$ is a finite group, $P$ is a Sylow $p$-subgroup of $G$ and $\F$ is the family of subgroups of $P$ (and their conjugates in $G$), $\mathcal{Q}(G,\F)$ is called a fusion system. Fusion systems are of particular interest to group theorists and algebraic topologists, and have numerous applications in these areas. For a complete introduction, the reader can refer to \cite{ao}.

\begin{prop} \label{quillen-UFP} Let $G$ be a group and $\F$ a family of finite subgroups.
The category $\mathcal{Q}(G,\F)$ satisfies the UFP if and only if $\F$ consists only of cyclic subgroups of prime power order (where different prime bases may occur in the same family).
\end{prop}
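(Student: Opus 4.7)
My plan is to reduce the UFP to a combinatorial condition on subgroup lattices. The key observation is that every morphism $\phi\colon H\to K$ in $\mathcal{Q}(G,\F)$ is a conjugation, hence injective, with image $\phi(H)\leq K$. Given any decomposition $\phi=\alpha_n\circ\cdots\circ\alpha_1$ with $\alpha_i\colon L_{i-1}\to L_i$ and $L_0=H$, $L_n=K$, the subgroups $M_i:=(\alpha_n\circ\cdots\circ\alpha_{i+1})(L_i)\leq K$ form a chain $\phi(H)=M_0\leq M_1\leq\cdots\leq M_n=K$ inside $K$. I would verify that each $\alpha_i$ is unfactorisable precisely when $M_{i-1}$ is a maximal proper subgroup of $M_i$, and that the isomorphism $L_i\xrightarrow{\cong}M_i$ induced by the tail of the composition provides a UFP-equivalence between the original decomposition and the normalised one $\phi(H)\hookrightarrow M_1\hookrightarrow\cdots\hookrightarrow M_{n-1}\hookrightarrow K$ consisting of inclusions of subgroups of $K$ (all lying in $\F$ since $\F$ is closed under taking subgroups).

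For the \emph{if} direction, suppose every $K\in\F$ is cyclic of prime power order. Then its subgroup lattice is totally ordered, so the normalised chain between $\phi(H)$ and $K$ is unique. Composing the normalising isomorphisms arising from two decompositions of $\phi$ directly furnishes the vertical arrows $h_i$ of the UFP ladder, and commutativity holds because morphisms in $\mathcal{Q}(G,\F)$ are determined by their underlying maps of sets.

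For the \emph{only if} direction, I would invoke the elementary group-theoretic lemma that a finite group has a unique maximal subgroup if and only if it is cyclic of prime power order. (Any element outside the unique maximal subgroup must generate the whole group; and a cyclic group of order $n$ has distinct maximal subgroups of index $p$ for each prime $p\mid n$.) So if some $K\in\F$ is not cyclic of prime power order, it has two distinct maximal subgroups $M\neq M'$; extending each to a maximal chain of subgroups of $K$ yields two decompositions of the unique morphism $\{1\}\to K$ into unfactorisables, with penultimate objects $M$ and $M'$. If the two chains have different lengths, the UFP is violated at once. Otherwise, the UFP demands an isomorphism $h\colon M\to M'$ in $\mathcal{Q}(G,\F)$ with $\iota_{M'}^K\circ h=\iota_M^K$; writing $h=[g]$ with $gMg^{-1}=M'$, this identity forces $g\in C_G(M)$, whence $M'=gMg^{-1}=M$, a contradiction.

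The main technical obstacle I anticipate is the normalisation step, i.e., rigorously exhibiting the UFP-equivalence between an arbitrary decomposition of $\phi$ and its normalised inclusion-chain counterpart; this requires carefully producing the vertical isomorphisms and checking compatibility with the (possibly non-inclusion) morphisms $\alpha_i$ by a pointwise comparison of conjugation maps. Once this translation between decompositions and chains of subgroups of $K$ is in place, the rest of the proof is pure subgroup-lattice combinatorics, entirely parallel to the analogous result \cite[Prop.~6.5.5]{SW} for the orbit category cited as Proposition~\ref{orbit-UFP}.
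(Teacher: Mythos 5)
Your proposal is correct and follows essentially the same route as the paper's proof: normalise an arbitrary decomposition by conjugating each intermediate object into $K$ via the tail of the composition, use that a cyclic group of prime power order has a totally ordered subgroup lattice for the `if' direction, and for `only if' compare two maximal chains through distinct maximal subgroups $M\neq M'$ and use the last ladder square to force the connecting isomorphism into $C_G(M)$, whence $M=M'$. The one small point to fix is that the normalised chain must keep $H$ (not $\phi(H)$) as its source --- its first arrow is the conjugation $H\to M_1$ with image $\phi(H)$ --- since the UFP ladder of Definition~\ref{UFP} requires identity vertical maps at both ends; this is exactly how the paper carries out the normalisation.
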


\begin{proof}
\emph{The 'only if' part.} Suppose that $\mathcal{Q}(G,\F)$ has the UFP.  First of all, it can be proved easily that every member $K$ of $\F$ is a finite $p$-group (for some $p$). We now show that $K$ has only one maximal subgroup. Indeed, let $H'$ and $H''$ be two maximal subgroups of $K$. Extend these to chains
\[\{1\}\subseteq H'_1 \subseteq H'_2 \subseteq \ldots \subseteq H'_{n-1} = H'\subseteq H'_n=K\]
and
\[\{1\}\subseteq H''_1 \subseteq H''_2 \subseteq \ldots \subseteq H''_{m-1} = H''\subseteq H''_m=K\]
of subgroups in $\F$, where $H'_i\subseteq H'_{i+1}$ and $H''_i\subseteq H''_{i+1}$ are maximal. By the UFP, we have $m=n$ and a commutative ladder
\begin{equation*}
\begin{tikzcd}[row sep=large]
\{1\} \arrow[r, "1"] \arrow[d, "1"] & H'_1 \arrow[d, "h_1"] \arrow[r,"1"] & H'_2 \arrow[d, "h_2"] \arrow[r,"1"]& \ldots \arrow[r,"1"] & H'_{n-2} \arrow[d, "h_{n-2}"] \arrow[r,"1"] & H' \arrow[d, "h"] \arrow[r,"1"] & K \arrow[d,"1"]\\
\{1\} \arrow[r,"1"] & H''_1 \arrow[r,"1"] & H''_2 \arrow[r,"1"]& \ldots \arrow[r,"1"]& H''_{n-2} \arrow[r,"1"] & H''\arrow[r,"1"] & K
\end{tikzcd}
\end{equation*}
From the commutativity of the rightmost square, we have $h\in C_G(H')$ and thus, since $h\in \mathrm{Trans}_G(H',H'')$, we get $H'=H''$.

This shows that $K$ is cyclic by an easy induction: The center $C$ of $K$ is non-trivial since $K$ is a $p$-group. The quotient $K/C$ still only has one maximal subgroup, thus is cyclic. It is a well-known fact that cyclicity of $K/C$ forces $K$ to be abelian, i.\ e. a product of cyclic groups. Again since $K$ has only one maximal subgroup, it is itself cyclic.

\emph{The 'if' part.} Now, suppose that $\F$ only has cyclic members of prime power order. Given a chain
\[H_0 \xrightarrow{g_1} H_1 \xrightarrow{g_2} H_2  \ldots \xrightarrow{g_n} H_n\]
of unfactorisable morphisms, we first manipulate it to get a chain
\[H_0 \xrightarrow{g'} H'_1 \xrightarrow{1} H'_2 \xrightarrow{1} H'_3 \ldots H'_{n-1} \xrightarrow{1} H_n\] with composition $g'$ modulo $C_G(H_{0})$. This works similarly as in the beginning of the proof of Lemma \ref{ESC,USC}. Let \[H_0 \xrightarrow{g''} H''_1 \xrightarrow{1} H''_2 \xrightarrow{1} H''_3 \ldots H''_{n-1} \xrightarrow{1} H_m=H_n\]
be another chain with the same composition, i.\ e. $g'(g'')^{-1} \in C_G(H_0)$.

First of all, let us note that since all inclusions are unfactorisable and all $H'_i$ and $H''_i$ are cyclic of order a power of $p$, the indices $[H'_i:H'_{i-1}]$ as well as $[H''_i:H''_{i-1}]$ are all equal to $p$, so that $m=n=\mathrm{log}_p([H_n:H_0])$. Since $H_n$ is cyclic, it has at most one subgroup of every order, so $H'_i=H''_i$. It follows that the two ladders equal each other since $g'= g'' \in C_G(H_0) \backslash \mathrm{Trans}_G(H_0,H'_1)$.
\end{proof}

\begin{thm}\label{quillen-hereditary}
Let $k$ be a field, $G$ a discrete group and $\F$ a family of finite subgroups. Then $k\mathcal{Q}(G,\F)$ is left hereditary if and only if all $K\in \F$ are cyclic of prime power order, and the finite groups $N_G(K)/C_G(K)$ have order invertible in $k$. Moreover, $k\mathcal{Q}(G,\F)$ is right hereditary if and only if it is left hereditary.
\end{thm}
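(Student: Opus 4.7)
The plan is to apply Theorem~\ref{hereditary-minimal} directly to $\mathcal{Q}(G,\F)$, which, as noted in the excerpt, is an EI category with the FFP whose endomorphism group at $K\in \F$ is the finite group $W_K := N_G(K)/C_G(K)$, embedded in $\mathrm{Aut}(K)$. The three conditions of Theorem~\ref{hereditary-minimal}—hereditarity of each group ring, the UFP, and the biset stabiliser conditions $(\mathrm{A}_d)$ and $(\mathrm{B}_d)$—will each be analysed separately and turn out to follow from, or specialise to, the two bullet points in the theorem statement.

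For the sufficiency direction, I would assume that every $K\in \F$ is cyclic of prime power order and that $|W_K|$ is invertible in $k$ for every $K\in \F$. Condition (1) of Theorem~\ref{hereditary-minimal} follows from Dicks' classification (Proposition~\ref{dicks}, case (H3)), since each $W_K$ is finite of order invertible in $k$; in fact $kW_K$ is then semisimple. Condition (2), the UFP, is immediate from Proposition~\ref{quillen-UFP}. For condition (3), note that every biset stabiliser in $\mathrm{Hom}_{\mathcal{Q}(G,\F)}(H,K)$ is a subgroup of the finite group $W_K\times W_H^{\mathrm{op}}$, whose order is invertible in $k$ under the hypothesis; hence every biset stabiliser is itself finite with order invertible in $k$, which at once yields both $(\mathrm{A}_d)$ and $(\mathrm{B}_d)$ (and in particular the local $k^\times$-finiteness in $(\mathrm{A}_d)$ is vacuous).

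For the necessity direction, assume $k\mathcal{Q}(G,\F)$ is left hereditary. Theorem~\ref{hereditary-minimal} then gives the UFP together with hereditarity of every group ring $kW_K$. Proposition~\ref{quillen-UFP} forces all $K\in \F$ to be cyclic of prime power order. Since each $W_K$ is finite, Remark~\ref{dicks-field} (or, equivalently, Maschke's theorem) shows that hereditarity of $kW_K$ is equivalent to $|W_K|$ being invertible in $k$, producing the second bullet point.

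For the final claim that left hereditarity is equivalent to right hereditarity, I would apply Theorem~\ref{hereditary-minimal} to $\mathcal{Q}(G,\F)^{\mathrm{op}}$ in place of $\mathcal{Q}(G,\F)$. The UFP is symmetric (as recorded in the excerpt), the endomorphism groups $W_K$ are unaltered by passage to the opposite, and $(\mathrm{A}_d)$ is symmetric. Only $(\mathrm{B}_d)$ is a priori not symmetric, but because in the Quillen category every biset stabiliser sits inside a finite group whose order is invertible in $k$, the second projection $\pr_2(H_i)$ is also automatically $k^\times$-finite whenever the first is. Hence the characterisation for right $k\mathcal{Q}(G,\F)$-modules coincides verbatim with the one for left modules. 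I expect no serious obstacle: the proof is essentially an unpacking of Theorem~\ref{hereditary-minimal}, Proposition~\ref{quillen-UFP}, and the finiteness of all relevant Weyl-type groups, with the last point—the automatic symmetry of $(\mathrm{B}_d)$ via finiteness—being the only thing one needs to verify with care.
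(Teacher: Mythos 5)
Your proposal is correct and follows essentially the same route as the paper: apply Theorem~\ref{hereditary-minimal} together with Proposition~\ref{quillen-UFP}, use Remark~\ref{dicks-field} to reduce hereditarity of the finite automorphism groups $N_G(K)/C_G(K)$ to $k^\times$-finiteness, and observe that $(\mathrm{A}_d)$ and $(\mathrm{B}_d)$ (in both orientations, giving the left/right equivalence) hold automatically because all biset stabilisers sit inside finite groups of order invertible in $k$. The paper's proof is just a terser version of exactly this unpacking.
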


\begin{proof}
Since all automorphism groups $N_G(K)/C_G(K)$ are finite, Dicks' condition from Remark \ref{dicks-field} reduces to them being $k^\times$-finite. Under this assumption, Conditions $(\mathrm{A}_d)$ and $(\mathrm{B}_d)$ become vacuous.
\end{proof}

\begin{rema}
A slight variation of the Quillen category is the subgroup category in the sense of L\"uck \cite{lueck-cc}. Its morphism sets are given by
 \[\mathrm{Hom}_{\mathrm{Sub}(G,\F)}(H,K)= K\backslash \mathrm{Trans}_G(H,K)/C_G(H)\,.\]
One can prove that $k\mathrm{Sub}(G,\F)$ is left hereditary if and only if it is right hereditary if and only if the conditions of Proposition~\ref{quillen-hereditary} hold. One sees again that dividing out the finite group $K$ doesn't make a difference, compare also Corollary~\ref{transporter-subgroups-hereditary} and Theorem~\ref{orbit-hereditary}.
\end{rema}

\section{An alternative proof of sufficiency and structure theorems for projective modules}
\label{suff2}

This section contains a direct combinatorial proof of Theorem \ref{bound gldim}, with the addendum of Theorem \ref{intro-sum-of-rep}, but under certain mild combinatorial restrictions on $\C$.
We assume throughout that $\C$ is the free tensor category associated to some tensor quiver $(X,\mathcal{U})$, that $\gldim R_x\le N$ for all $x\in X$, and that conditions $(\mathrm{A}_N)$, $(\mathrm{B}_N)$ and (BP) introduced in Subsection~\ref{ANBN} are satisfied. The additional combinatorial conditions will be collected along the way. They can be formulated most generally in terms of the quiver $(X,\mathcal{U})$ \emph{and} the module we are considering, cf. Corollaries \ref{surjective}, \ref{semi-N-hereditary} and \ref{semi-hereditary}, where the latter two deal with finitely generated modules only. Finally, we show that our reasoning can be applied under the simple condition that $X$ has no left-infinite chains. The proof of this assertion is non-trivial (to our knowledge) and uses the so-called combinatorial compactness argument.

Recall that for a $k\C$-module $K$, and for $d$ an object of $\C$, the left $R_d$-submodule of $K(d)$ generated by the images of $K(c)$ for $c<d$ is denoted by $B_d(K)$. This defines a subfunctor of $K$ mapping $d$ to $B_d(K)$. The left $R_d$-module $S_d(K)$ is defined by the short exact sequence
\begin{align}\label{BKS}
    0\longrightarrow B_d(K) \longrightarrow K(d) \longrightarrow S_d(K)\longrightarrow 0\,.
\end{align}

\begin{lemma}\label{split}
Let $(X,\mathcal{U})$ satisfy $(\mathrm{B}_N)$ and (BP). Assume $N\ge 1$ and let $K$ be an $N$-th kernel. Then the exact sequence \eqref{BKS} splits for every object of $\C$.
\end{lemma}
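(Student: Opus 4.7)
The plan is to reduce the claim to the statement that $S_d(K)$ is a projective $R_d$-module, and then deduce this using hypothesis $(\mathrm{B}_N)$ applied to the first short exact sequence extracted from the given resolution of $K$.

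\emph{Step 1: Reduce to projectivity of $S_d(K)$.} Evaluating the defining resolution $0\to K\to P_N\to\cdots\to P_1$ at $d$ preserves exactness, and by (BP) together with Lemma~\ref{objectwise-projective} each $P_i(d)$ is a projective $R_d$-module; hence $K(d)$ is an $N$-th kernel in $R_d\Mod$, and the hypothesis $\gldim R_d\le N$ forces $K(d)$ to be projective. Since $K(d)$ is projective, the sequence~\eqref{BKS} splits over $R_d$ if and only if $S_d(K)$ is itself a projective $R_d$-module.

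\emph{Step 2: Apply $(\mathrm{B}_N)$.} Let $L_{N-1}$ be the image of $P_N\to P_{N-1}$, so that $0\to K\to P_N\to L_{N-1}\to 0$ is short exact in $\C\Mod$. Restricting along $R\hookrightarrow \C$ gives a short exact sequence of $R$-modules in which $K$ and $P_N$ are projective (by Step~1 applied at every object combined with (BP)), while $L_{N-1}$ is an $(N-1)$-st kernel of $R$-modules (truncate the resolution). Hypothesis $(\mathrm{B}_N)$ then produces an $R$-linear decomposition
\[
\C\otimes_R P_N\;\cong\;\C\otimes_R L_{N-1}\;\oplus\;\mathrm{im}(\C\otimes_R K)\,,
\]
and evaluation at $d$ yields an $R_d$-linear retraction $s$ of the inclusion $\mathrm{im}(\C\otimes_R K)(d)\hookrightarrow (\C\otimes_R P_N)(d)$.

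\emph{Step 3: Extract a splitting of \eqref{BKS}.} Under the canonical decomposition
\[
(\C\otimes_R M)(d)\;=\;M(d)\oplus F_d(M),\qquad F_d(M)=\bigoplus_{c<d}\C(c,d)\otimes_{R_c}M(c)\,,
\]
and using the multiplication maps $\mu_M\colon (\C\otimes_R M)(d)\to M(d)$ (whose restrictions to $F_d(M)$ have image $B_d(M)$), the retraction $s$ breaks into diagonal and off-diagonal pieces. The retraction property forces the diagonal components $s_{11}\colon P_N(d)\to K(d)$ and $s_{22}\colon F_d(P_N)\to F_d(K)$ to be retractions of the inclusions induced by $K\hookrightarrow P_N$. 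Chasing these through the commuting squares that couple $\mu_K$ and $\mu_{P_N}$ ultimately yields an $R_d$-linear retraction $K(d)\to B_d(K)$ of $B_d(K)\hookrightarrow K(d)$. Consequently $S_d(K)$ is a direct summand of the projective $R_d$-module $K(d)$, and hence projective.

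The main obstacle lies in Step~3: the retraction $s$ produced by $(\mathrm{B}_N)$ is only $R_d$-linear and is not required to commute with the multiplication maps $\mu_M$ or to respect the internal $c$-decomposition. The diagram chase must exploit simultaneously that the $(\mathrm{B}_N)$ splitting is $R$-linear \emph{before} evaluation at $d$ (and so automatically respects the $R$-grading by target), the functoriality of $\mu$ in $M$, and the precise behaviour of the canonical inclusions in order to extract the desired retraction of $B_d(K)\hookrightarrow K(d)$.
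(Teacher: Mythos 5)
Your Steps 1 and 2 are sound, and the gap you flag yourself in Step 3 is real: the proposed diagram chase does not go through, and I do not see how it could. The splitting produced by $(\mathrm{B}_N)$ lives in $\C\otimes_R P_N$, whereas the splitting you need lives in $P_N(d)$ (indeed inside $K(d)$); the only bridge between the two is the multiplication map $(\C\otimes_R P_N)(d)\to P_N(d)$, which is surjective but not injective, and the $R_d$-linear retraction coming from $(\mathrm{B}_N)$ has no reason to be compatible with it. Even after observing that the retraction can be made to respect the source-decomposition (restrict to a summand, then project — this recovers, for each $c<d$, a retraction of $\mathrm{im}\bigl(\C(c,d)\otimes_{R_c}K(c)\to\C(c,d)\otimes_{R_c}P_N(c)\bigr)$ inside $\C(c,d)\otimes_{R_c}P_N(c)$), you are still holding a splitting of an \emph{external} tensor product, not of the submodule $B_d(K)\subseteq K(d)$, and for a general projective $P_N$ the action map $\bigoplus_{c<d}\C(c,d)\otimes_{R_c}P_N(c)\to P_N(d)$ is not a split injection, so the summand cannot be transported.

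The missing idea is the reduction to a \emph{free} module: by adding a projective complement to $P_N$ and $P_{N-1}$ one may assume $P_N=\C^{(A)}$, and then the structure of the free tensor category gives a canonical $R_d$-module decomposition $P_N(d)\cong R_d^{(A)}\oplus\bigoplus_{c<d}U(c,d)\otimes_{R_c}P_N(c)$ in which the action maps \emph{are} the inclusions of direct summands. Under this identification, $B_d(K)$ is exactly $\bigoplus_{c<d}G_c\bigl(U(c,d)\otimes_{R_c}K(c)\bigr)$, where $G_c$ is induced by $K(c)\hookrightarrow P_N(c)$. Applying $(\mathrm{B}_N)$ \emph{objectwise} to $0\to K(c)\to P_N(c)\to P_N(c)/K(c)\to 0$ (legitimate since, as in your Step 1, $K(c)$ is projective and the cokernel is an $(N-1)$-st kernel over $R_c$ by Lemma~\ref{objectwise-projective}) splits each image $G_c\bigl(U(c,d)\otimes_{R_c}K(c)\bigr)$ off its ambient summand $U(c,d)\otimes_{R_c}P_N(c)$, hence exhibits $B_d(K)$ as a direct summand of $P_N(d)$; since $B_d(K)\subseteq K(d)\subseteq P_N(d)$, intersecting a complement with $K(d)$ splits \eqref{BKS}. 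This is the route the paper takes; your reduction to projectivity of $S_d(K)$ is then not even needed.
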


\begin{proof}
Since $N\ge 1$, $K$ is a submodule of the projective $\C$-module $P_N$. By adding a projective module to $P_N$ and $P_{N-1}$ (or $M$ if $N=1$), we may assume that $P_N$ is a free module, i.\ e.
\begin{align}\label{free}P_N(y) = \bigoplus_{x<y} \C(x,y)^{(A)} = \bigoplus_{\gamma\colon ? \rightarrow y}  U(\gamma)^{(A)}  \end{align}
where $A$ is some (possibly infinite) set. The subscript of the latter sum indicates that it runs over all chains ending in $y$. Dividing these up according to the second-to-last entry (which exists in every case except for the singleton chain $(y)$, and the empty chain which doesn't contribute to the sum), we can write
\[P_N(y)\cong R_y^{(A)} \oplus \bigoplus_{x<y} U(x,y)\otimes_{R_x} \left(\bigoplus_{\gamma\colon ?\rightarrow x} U(\gamma)^{(A)}\right) = R_y^{(A)}\oplus\,\bigoplus_{x<y} U(x,y)\otimes_{R_x} P_N(x)\,.\]
Now, consider for $x<y$ the short exact sequence
\[0 \longrightarrow K(x) \longrightarrow P_N(x) \longrightarrow P_N(x)/K(x)\longrightarrow 0\,.\]
By Lemma \ref{objectwise-projective}, the cokernel is an $(N-1)$-st kernel in the category of $R_x$-modules and $K(x)$ is an $N$-th kernel, thus projective. Applying $(\mathrm{B}_N)$, the sequence splits after tensoring with $U(x,y)$:
\[U(x,y)\otimes_{R_x} P_N(x) \cong U(x,y)\otimes_{R_x} \left(P_N(x)/K(x)\right) \oplus G_x(U(x,y)\otimes_{R_x} K(x))\]
where
\[G_x\colon U(x,y)\otimes_{R_x} K(x) \longrightarrow U(x,y)\otimes_{R_x} P_N(x)\]
is the canonical map which is not necessarily injective. It follows that
\begin{align*} P_N(y)&\cong R_y^{(A)} \oplus \bigoplus_{x<y} U(x,y)\otimes_{R_x} \left(P_N(x)/K(x)\right) \oplus  \bigoplus_{x<y} G_x(U(x,y)\otimes_{R_x} K(x))\notag\\ & \cong   R_y^{(A)} \oplus \bigoplus_{x<y} U(x,y)\otimes_{R_x} \left(P_N(x)/K(x)\right) \oplus B_y(K) \end{align*}
as left $R_y$-modules. Now, $K(y)$ is an $R_y$-submodule of $P_N(y)$ \emph{which contains the third summand $B_y(K)$ completely} and thus splits of this summand.
\end{proof}

In the situation of Lemma \ref{split}, we thus get a map of $k\C$-modules
\begin{align}\label{def-F}
    F\colon \bigoplus_{x} k\C\otimes_{R_x} S_x(K) \rightarrow K\,.
\end{align}

\begin{rema} This map is not natural in $K$ since it depends on choices in splitting the sequences \eqref{BKS}.
\end{rema}

\begin{lemma}\label{F-injective}
Suppose that $(\mathrm{A}_N)$ holds. Then $F$ is injective.
\end{lemma}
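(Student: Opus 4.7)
The plan is to fix an object $y$ and prove that $F_y\colon Z(y)\to K(y)$ is injective (where $Z$ denotes the source of $F$), by strong induction on the maximum chain length appearing in a source element. The structural starting point is the free tensor decomposition of $\C$: every chain $\gamma\colon x\to y$ of length $\ge 1$ factors uniquely as a chain of length $\ell(\gamma)-1$ from $x$ to some $x'<y$ followed by an unfactorisable edge $x'\to y$, which yields the $R_y$-module isomorphism
\[
Z(y)\;\cong\;S_y(K)\,\oplus\,\bigoplus_{x'<y}U(x',y)\otimes_{R_{x'}}Z(x')\,.
\]
Under the splitting $K(y)=B_y(K)\oplus S_y(K)$ from Lemma~\ref{split}, $F_y$ acts as the identity on the $S_y(K)$-summand and sends the remaining summand into $B_y(K)$ through the composition
\[
\bigoplus_{x'<y}U(x',y)\otimes_{R_{x'}}Z(x')\xrightarrow{\bigoplus\mathrm{id}\otimes F_{x'}}\bigoplus_{x'<y}U(x',y)\otimes_{R_{x'}}K(x')\xrightarrow{\;\phi\;}B_y(K)\,,
\]
where $\phi(u\otimes k)=K(u)(k)$.

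First I would show that $\phi$ is an isomorphism. Surjectivity is immediate from the free tensor structure, because every non-invertible morphism $\alpha\colon x\to y$ factors through an unfactorisable last edge, so every generator of $B_y(K)$ already lies in $\mathrm{im}\,\phi$. For injectivity, I would embed $K\hookrightarrow P_N$ and compare with the analogous map $\phi^{P_N}$, which appears in the proof of Lemma~\ref{split} as the inclusion of a direct summand of $P_N(y)$. Since the cokernel $P_N(x')/K(x')$ is an $(N{-}1)$-st kernel over $R_{x'}$ and $U(x',y)$ is a direct summand of $\C(x',y)$, condition $(\mathrm{A}_N)$ gives $\mathrm{Tor}^1_{R_{x'}}(U(x',y),P_N(x')/K(x'))=0$, so that $U(x',y)\otimes_{R_{x'}}K(x')\hookrightarrow U(x',y)\otimes_{R_{x'}}P_N(x')$; composing with the split injection $\phi^{P_N}$ forces $\phi$ to be injective.

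With $\phi$ an isomorphism, the remaining difficulty is to control the behaviour of $F_{x'}$ after tensoring with $U(x',y)$. I would handle this by induction on chain length, but with a strengthened claim: writing $Z_\ell(y)\subseteq Z(y)$ for the submodule spanned by chains of length $\le \ell$, each restriction $F_y|_{Z_\ell(y)}$ is a split monomorphism of $R_y$-modules with projective cokernel. The base $\ell=0$ is the split inclusion $S_y(K)\hookrightarrow K(y)$, whose cokernel $B_y(K)$ is projective because $K(y)$, being an $N$-th kernel over $R_y$ with $\gldim R_y\le N$, is itself projective by Lemma~\ref{objectwise-projective}. In the inductive step, tensoring the inductively supplied split SES $0\to Z_{\ell-1}(x')\to K(x')\to C_{x'}\to 0$ with $U(x',y)$ preserves splitness; applying the iso $\phi$ and adjoining the $S_y(K)$-piece yields a split SES $0\to Z_\ell(y)\to K(y)\to\bigoplus_{x'<y}U(x',y)\otimes_{R_{x'}}C_{x'}\to 0$, whose cokernel is projective over $R_y$ via $(\mathrm{BP})$ (which makes each $U(x',y)$ projective as a left $R_y$-module). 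Since any source element lies in some $Z_\ell(y)$, this yields $F_y$ injective.

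The hard part will be precisely this strengthening from bare injectivity to split-injective-with-projective-cokernel: plain injectivity of $F_{x'}$ would not survive tensoring with $U(x',y)$, so additional homological control must be tracked through the induction. This is why the proof leans on $(\mathrm{BP})$ and Lemma~\ref{objectwise-projective} in addition to the explicit hypothesis $(\mathrm{A}_N)$ featured in the lemma statement, all of which are available as standing assumptions of the section.
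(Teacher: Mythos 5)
Your argument is correct and essentially reproduces the paper's own proof: your isomorphism $\phi$ is the paper's observation (via $(\mathrm{A}_N)$ applied to the $(N-1)$-st kernels $P_N(x')/K(x')$) that the multiplication maps $U(x',y)\otimes_{R_{x'}}K(x')\to K(y)$ are injective, giving $B_y(K)\cong\bigoplus_{x'<y}U(x',y)\otimes_{R_{x'}}K(x')$, and your induction showing each $F_y|_{Z_\ell(y)}$ is a split monomorphism is precisely the paper's iterated decomposition of $K(y)$ by chain length, followed by the remark that every element of the source lies in a finite stage (the paper phrases this as passing to the colimit of the resulting short exact sequences). The only difference is cosmetic: you additionally carry projectivity of the cokernels through the induction, which is harmless but unnecessary, since split monomorphy alone survives tensoring with $U(x',y)$.
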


\begin{proof}
The map $G_x$ in the proof of Lemma \ref{split} is injective, so
\[B_y(K)\cong \bigoplus_{x<y} U(x,y)\otimes_{R_x} K(x)\]
and we have
\[K(y)\cong S_y(K)\oplus \bigoplus_{x<y} U(x,y)\otimes_{R_x} K(x)\,.\]
Iterating this, we get
\[K(y)\cong \bigoplus_{\substack{\gamma\colon x_0\rightarrow y\\\ell(\gamma)\le \ell}} U(\gamma)\otimes_{R_{x_0}} S_{x_0}(K) \oplus \bigoplus_{\substack{\gamma \colon x_0\rightarrow y\\ \ell(\gamma)=\ell+1}} U(\gamma)\otimes_{R_{x_0}} K(x_0)\,. \]
The short exact sequences
\[0\longrightarrow \bigoplus_{\substack{\gamma\colon x_0\rightarrow ?\\\ell(\gamma)\le \ell}} U(\gamma)\otimes_{R_{x_0}} S_{x_0}(K)\longrightarrow K \longrightarrow \bigoplus_{\substack{\gamma \colon x_0\rightarrow ?\\ \ell(\gamma)=\ell+1}} U(\gamma)\otimes_{R_{x_0}} K(x_0) \longrightarrow 0 \]
of $k\C$-modules (which are not natural in $K$ and split objectwise, but not as functors) are connected by maps from the $\ell$-th to the $(\ell+1)$-st sequence given by the inclusion on the left, which is the restriction of $F$, the identity in the center and the induced map on the right. Passing to the colimit, since filtered colimits are exact, we get an exact sequence
\begin{align}\label{magic-ses} 0\longrightarrow \bigoplus_{\substack{\gamma\colon x_0\rightarrow ?}} U(\gamma)\otimes_{R_{x_0}} S_{x_0}(K)\longrightarrow K \longrightarrow \colim_{\ell} \bigoplus_{\substack{\gamma \colon x_0\rightarrow ?\\ \ell(\gamma)=\ell+1}} U(\gamma)\otimes_{R_{x_0}} K(x_0) \longrightarrow 0\,.\end{align}
The left map is identified with $F$, so that $F$ is injective.
\end{proof}

Let us consider the colimit on the right in the last short exact sequence of the proof in more detail. The structure maps
\begin{align}\label{colim-map}
\psi_\ell\colon \bigoplus_{\substack{\gamma \colon x_0\rightarrow ?\\ \ell(\gamma)=\ell+1}} U(\gamma)\otimes_{R_{x_0}} K(x_0) \longrightarrow \bigoplus_{\substack{\gamma \colon z_0\rightarrow ?\\ \ell(\gamma)=\ell+2}} U(\gamma)\otimes_{R_{z_0}} K(z_0)
\end{align}
come from the (chosen) splittings
\[K(x_0)\cong S_{x_0}(K)\oplus \bigoplus_{z_0<x_0} U(z_0,x_0)\otimes_{R_{z_0}} K(z_0) \]
by projecting to the second summand and thus have the following property:
\begin{equation}\tag{$\ast$}\label{ast}
\parbox{\dimexpr\linewidth-4em}{A summand on the left-hand side of \eqref{colim-map} indexed by a chain starting in $x_0$ is mapped only to summands indexed by chains starting in some $z_0 < x_0$.}
\end{equation}

This property directly implies the following:

\begin{cor} \label{surjective}
Suppose that for every $y$, there is a number $M$ such that for all chains $\gamma=(x_0,\ldots, y)$ of length $\ge M$, we have $U(\gamma)\otimes_{R_{x_0}} K(x_0)=0$. Then $F$ is surjective.
\end{cor}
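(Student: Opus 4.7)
The plan is to read off surjectivity of $F$ directly from the exact sequence
\begin{equation*}
0 \longrightarrow \bigoplus_{\gamma\colon x_0\rightarrow ?} U(\gamma)\otimes_{R_{x_0}} S_{x_0}(K) \xrightarrow{\phantom{a}F\phantom{a}} K \longrightarrow \colim_{\ell} \bigoplus_{\substack{\gamma\colon x_0\rightarrow ?\\ \ell(\gamma)=\ell+1}} U(\gamma)\otimes_{R_{x_0}} K(x_0) \longrightarrow 0
\end{equation*}
constructed as \eqref{magic-ses} in the proof of Lemma~\ref{F-injective}, in which the leftmost map has already been identified with $F$. Consequently, $F$ is surjective if and only if the colimit on the right vanishes as a $k\C$-module.

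Next, I would invoke the fact that colimits in the category of $k\C$-modules are computed objectwise (evaluation at each object is exact). Thus it suffices to show for every object $y$ of $\C$ that
\begin{equation*}
\colim_{\ell} C_\ell(y) = 0, \qquad C_\ell(y) := \bigoplus_{\substack{\gamma\colon x_0\rightarrow y\\ \ell(\gamma)=\ell+1}} U(\gamma)\otimes_{R_{x_0}} K(x_0)\,.
\end{equation*}
Here the transition maps $C_\ell(y)\rightarrow C_{\ell+1}(y)$ are the evaluations at $y$ of the maps $\psi_\ell$ from \eqref{colim-map}. The hypothesis furnishes an integer $M=M(y)$ beyond which every summand $U(\gamma)\otimes_{R_{x_0}} K(x_0)$ indexed by a chain $\gamma$ ending in $y$ vanishes. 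Hence $C_\ell(y)=0$ for all $\ell+1\ge M$, and the colimit of a directed sequence of $R_y$-modules which is eventually zero is itself zero.

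I do not expect a substantive obstacle: the hard work has already been done in establishing \eqref{magic-ses} and in analysing the transition maps via property \eqref{ast}. The only point meriting explicit mention is the objectwise vanishing argument for the colimit, which is immediate from the definition of a filtered colimit of modules together with the eventual triviality of the terms at a given $y$.
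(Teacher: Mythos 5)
Your argument is correct and coincides with the paper's intended proof: Corollary~\ref{surjective} is stated immediately after the construction of the exact sequence \eqref{magic-ses} and the description of the transition maps \eqref{colim-map}, and the paper leaves exactly the verification you spell out (objectwise computation of the colimit, eventual vanishing of the terms $C_\ell(y)$ for $\ell+1\ge M(y)$) to the reader. Your write-up also correctly isolates that only the exact sequence \eqref{magic-ses} and the hypothesis are needed here, property \eqref{ast} being required only for the sharper Proposition~\ref{F-surjective-tychonoff}.
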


Note that if we can prove that $F$ is bijective, this doesn't only imply that the $N$-th kernel $K$ is projective, but also that it is of a very specific form, namely of the form
\begin{align} \label{sum-of-representables} K\cong \bigoplus_{x} k\C\otimes_{R_x} P_x \end{align}
for some projective $R_x$-modules $K$. Let us call a $\C$-module of this form an \emph{induced module}. In particular, if we prove surjectivity of $F$ for all $N$-th kernels $K$, we may take $K$ to be an arbitrary projective and get that every projective is an induced module.

\begin{rema}\label{combinatorial-conditions}
Here are two cases where the condition of Corollary \ref{surjective} is satisfied:
\begin{itemize}
    \item For every $y\in X$, the length of chains (with respect to $<$) terminating in $y$ is bounded.
    \item $K$ is finitely generated and for all $x, y\in X$, the length of chains between $x$ and $y$ is bounded.
\end{itemize}
\end{rema}

We state a corollary in the second case, but leave out the first case since we will prove a more general version below in Proposition~\ref{F-surjective-tychonoff}.

\begin{cor} \label{semi-N-hereditary}
Let $N\ge 1$. Let $\C$ be the free tensor category over a directed tensor quiver satisfying the \emph{second} condition of Remark \ref{combinatorial-conditions} (''locally bounded``), $(\mathrm{A}_N)$, $(\mathrm{B}_N)$, and $\gldim R_x\le N$ for all $x$. Then the category of left $k\C$-modules has the property that any finitely-generated $N$-th kernel is projective. Furthermore, any finitely generated projective module is an induced module as in \eqref{sum-of-representables}.
\end{cor}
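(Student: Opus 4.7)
The plan is to apply Corollary~\ref{surjective} in combination with Lemma~\ref{F-injective} to show that for any finitely generated $N$-th kernel $K$, the comparison map $F$ of~\eqref{def-F} is an isomorphism, whence $K$ appears as a sum of induced modules that are manifestly projective. Lemma~\ref{split} already applies under $(\mathrm{B}_N)$ and (BP), producing the splittings of~\eqref{BKS} and hence $F$; Lemma~\ref{F-injective} then grants injectivity of $F$ using $(\mathrm{A}_N)$. So the essential point is to verify the chain-truncation hypothesis of Corollary~\ref{surjective} from finite generation of $K$ together with the locally bounded assumption on $\C$.

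To that end, I would fix generators $v_1,\ldots,v_n$ of $K$ with $v_i\in K(z_i)$ for certain $z_i\in X$. Since every element of $K(x_0)$ is a $k$-linear combination of expressions $\alpha\cdot v_i$ with $\alpha\in\C(z_i,x_0)$, the condition $K(x_0)\neq 0$ forces $x_0\ge z_i$ for some $i$. Fix any $y\in X$; the locally bounded assumption yields a uniform bound $L_i$ on lengths of chains from $z_i$ to $y$. Given any chain $\gamma=(x_0,\ldots,y)$ with $K(x_0)\neq 0$, pick $i$ with $x_0\ge z_i$ and either take $\gamma$ itself (if $z_i=x_0$) or prepend $z_i$ (if $z_i<x_0$); the resulting chain from $z_i$ to $y$ has length at least $\ell(\gamma)$, forcing $\ell(\gamma)\le L_i$. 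Hence $M:=1+\max_i L_i$ satisfies the hypothesis of Corollary~\ref{surjective}, and $F$ is surjective, hence an isomorphism. This is the only nontrivial combinatorial step; everything else is bookkeeping.

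It remains to show that each summand $k\C\otimes_{R_x}S_x(K)$ is a projective $k\C$-module. Evaluating the resolution~\eqref{Nth-kernel} at $x$ and invoking Lemma~\ref{objectwise-projective} (which uses (BP)), we see that $K(x)$ is an $N$-th kernel in $R_x\Mod$, hence projective since $\gldim R_x\le N$. The splitting of~\eqref{BKS} from Lemma~\ref{split} exhibits $S_x(K)$ as a direct summand of $K(x)$, so $S_x(K)$ is projective over $R_x$. The functor $k\C\otimes_{R_x}-$ is left adjoint to evaluation at $x$ and therefore preserves projectives; thus each summand is projective over $k\C$, and $K$ is itself a (possibly infinite) direct sum of projectives, hence projective and of the claimed form. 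For the \emph{furthermore}, any projective module $P$ is trivially an $N$-th kernel (take $P_N=P$ and $P_i=0$ for $i<N$), so a finitely generated projective is covered by the first part and is automatically an induced module.
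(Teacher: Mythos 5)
Your proof is correct and follows essentially the same route the paper intends: Lemma~\ref{split} and Lemma~\ref{F-injective} give the map $F$ and its injectivity, Corollary~\ref{surjective} gives surjectivity, and the identification of $K$ with an induced module yields projectivity. Your verification that finite generation plus the locally bounded condition implies the chain-truncation hypothesis of Corollary~\ref{surjective} is exactly the content the paper asserts without proof in Remark~\ref{combinatorial-conditions}, and your argument for it (and for the projectivity of each $S_x(K)$ over $R_x$ via Lemma~\ref{objectwise-projective}, which uses the standing assumption (BP) of Section~\ref{suff2}) is sound.
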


For $N=1$, note that a $1$-st kernel is just a submodule of a projective module. The property that finitely generated submodules of projective modules are projective is sometimes called semi-hereditarity.

\begin{cor} \label{semi-hereditary}
Let $\C$ be the free tensor category over a directed tensor quiver satisfying the \emph{second} condition of Remark \ref{combinatorial-conditions} (''locally bounded``), (A), (B), and $\gldim R_x\le 1$ for all $x$. Then the category of left $k\C$-modules is semi-hereditary, and any finitely generated projective module is an induced module as in \eqref{sum-of-representables}.
\end{cor}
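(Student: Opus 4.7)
The plan is to deduce Corollary~\ref{semi-hereditary} directly as the special case $N=1$ of Corollary~\ref{semi-N-hereditary}. By the remark immediately following the definitions of $(\mathrm{A}_N)$ and $(\mathrm{B}_N)$ in Subsection~\ref{ANBN}, condition $(\mathrm{A}_1)$ is equivalent to condition (A), and the conjunction of $(\mathrm{B}_1)$ and (BP) is equivalent to condition (B). Thus the hypotheses of Corollary~\ref{semi-N-hereditary} with $N=1$ are exactly the hypotheses we have been given: locally boundedness of $(X,\mathcal{U})$, conditions (A) and (B), and $\gldim R_x\le 1$ for every $x\in X$.

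Next, I would translate the conclusions. A $1$-st kernel, in the terminology of Subsection~\ref{ANBN}, is by definition nothing but a submodule of a projective left $k\C$-module. Hence the first assertion of Corollary~\ref{semi-N-hereditary} specialises to: every finitely generated submodule of a projective $k\C$-module is itself projective, which is precisely the statement that $k\C\Mod$ is semi-hereditary. The second assertion of Corollary~\ref{semi-N-hereditary} carries over verbatim to yield the claimed induced-module description \eqref{sum-of-representables} of every finitely generated projective $k\C$-module.

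There is, in effect, no real obstacle here: all of the substantive work has already been done in Lemmas~\ref{split} and \ref{F-injective} and in Corollary~\ref{surjective}, which were assembled into Corollary~\ref{semi-N-hereditary}. The only task remaining is the cosmetic one of matching dictionaries between the refined $(\mathrm{A}_N)/(\mathrm{B}_N)/(\mathrm{BP})$ formulation and the more transparent (A)/(B) formulation in the case $N=1$; this is handled entirely by the equivalences noted in the remark cited above. Accordingly, the proof amounts to nothing more than an application of Corollary~\ref{semi-N-hereditary} with $N=1$ together with those equivalences.
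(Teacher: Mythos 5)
Your proposal is correct and matches the paper's own (implicit) argument exactly: the paper states Corollary~\ref{semi-hereditary} as the $N=1$ specialisation of Corollary~\ref{semi-N-hereditary}, using precisely the equivalences $(\mathrm{A}_1)\Leftrightarrow(\mathrm{A})$ and $(\mathrm{B}_1)\wedge(\mathrm{BP})\Leftrightarrow(\mathrm{B})$ together with the observation that a $1$-st kernel is just a submodule of a projective. Your note that (B) also supplies the standing hypothesis (BP) needed in Lemma~\ref{split} is the one point worth making explicit, and you made it.
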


We will now give a proof that $F$ is bijective which imposes less severe combinatorial conditions on $\C$, but uses more advanced combinatorial arguments. Actually, we only assume the following:

\begin{defi}\label{left-infinite chains}
Let $X$ be a partially ordered set. A \emph{left-infinite chain in $X$} is a sequence $(x_i)_{i\ge 0}$ in $X$ indexed by the non-negative integers such that $x_{i}<x_{i-1}$.
\end{defi}

\begin{prop}\label{F-surjective-tychonoff}
Suppose that $X$ has no left-infinite chains. Then $F$ is surjective (i.\ e., bijective).
\end{prop}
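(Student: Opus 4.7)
The plan is to exploit the short exact sequence \eqref{magic-ses}: since $F$ is the injection on the left, surjectivity of $F$ is equivalent to vanishing of the colimit on the right. Writing $A_\ell = \bigoplus_{\gamma\colon \ell(\gamma)=\ell+1} U(\gamma)\otimes_{R_{x_0(\gamma)}} K(x_0(\gamma))$ and $\psi_\ell\colon A_\ell \to A_{\ell+1}$ for the transition maps \eqref{colim-map}, vanishing of a sequential colimit of abelian groups is equivalent to the statement that for every $\ell_0$ and every $a\in A_{\ell_0}$ there is some $k$ with $\psi^k(a):=\psi_{\ell_0+k-1}\circ\cdots\circ \psi_{\ell_0}(a)=0$. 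So the goal is reduced to showing that every element is eventually annihilated.

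The first observation is that every $a\in A_\ell$ has finite support in the direct sum, so we may write $a=\sum_{\gamma\in F} a_\gamma$ with $F$ a finite set of chains and $a_\gamma \in U(\gamma)\otimes_{R_{x_0(\gamma)}} K(x_0(\gamma))$. The second, more crucial, observation concerns the support of $\psi_\ell(a_\gamma)$: using the splitting $K(x_0) \cong S_{x_0}(K) \oplus \bigoplus_{z<x_0} U(z,x_0)\otimes_{R_z} K(z)$ together with the fact that tensor product commutes with direct sums, one sees that $\psi_\ell(a_\gamma)$ lies in $\bigoplus_{z<x_0} U(\gamma')\otimes_{R_z} K(z)$ with only finitely many nonzero $z$-components, where $\gamma'=(z,x_0,\ldots,x_\ell)$ extends $\gamma$ on the left.

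Given these two observations I would build a tree $T$: the nodes at depth $k$ are the chains $\gamma$ appearing in the support of $\psi^k(a)$, and the parent of a chain $\gamma'=(z,x_0,\ldots,x_\ell)$ at depth $k+1$ is the chain $(x_0,\ldots,x_\ell)$ at depth $k$. Property \eqref{ast} guarantees that $\psi$ sends a $\gamma$-component into chains that extend $\gamma$ on the left, so every node has a unique parent candidate, which is necessarily in $T$ (otherwise the child component would vanish). The two observations above make $T$ a locally finite forest with finitely many roots. If $T$ is finite then $\psi^k(a)=0$ for $k$ exceeding its depth, and we are done. Otherwise, by König's lemma (applied, after adjoining a common root, to the tree under discussion, which is the combinatorial compactness argument alluded to in the section's introduction), $T$ contains an infinite branch $\gamma_0,\gamma_1,\gamma_2,\ldots$ in which each $\gamma_{k+1}$ extends $\gamma_k$ on the left by a strictly smaller element of $X$. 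This yields an infinite strictly descending sequence $x_0(\gamma_0) > x_0(\gamma_1) > x_0(\gamma_2) > \cdots$ in $X$, contradicting the hypothesis that $X$ has no left-infinite chains.

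The main obstacle is the bookkeeping to verify that $T$ is well-defined and locally finite. This is precisely the point at which the direct-sum (as opposed to product) structure in the definition of $A_\ell$, and in the splitting of $K(x_0)$ furnished by $(\mathrm{B}_N)$, enters essentially: it is what ensures that each iterate of $\psi$ applied to a finitely supported element is again finitely supported, and in fact branches only finitely at each node. Once this is in place, König's lemma does the rest.
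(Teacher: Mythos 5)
Your proof is correct and follows essentially the same route as the paper: reduce surjectivity of $F$ to vanishing of the colimit in \eqref{magic-ses}, use the direct-sum structure and property \eqref{ast} to get finite, strictly descending level sets, and conclude by a combinatorial compactness argument against the nonexistence of left-infinite chains. The paper phrases the final step via Tychonoff's theorem applied to the product of the (finite, discrete) sets of starting objects of chains in the support, while you apply K\H{o}nig's lemma to a locally finite tree of chains -- but these are the same classical argument, as the paper itself notes by tracing its compactness step back to K\H{o}nig.
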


\begin{proof}
We want to show that \[\colim_\ell \bigoplus_{\substack{\gamma \colon x_0\rightarrow ?\\ \ell(\gamma)=\ell+1}} U(\gamma)\otimes_{R_{x_0}} K(x_0)=0\]
where the product is along the structure maps $\psi_{\ell}$ described in \eqref{colim-map} above. The only thing we will use about the $\psi_{\ell}$ is property ($\ast$) displayed above.

Let $x\in \bigoplus_{\substack{\gamma \colon x_0\rightarrow ?\\ \ell(\gamma)=\ell+1}} U(\gamma)\otimes_{R_{x_0}} K(x_0)$ for some $\ell$. We want to show that for some $L$, the image of $x$ vanishes in $\bigoplus_{\substack{\gamma \colon x_0\rightarrow ?\\ \ell(\gamma)=L+1}} U(\gamma)\otimes_{R_{x_0}} K(x_0)$. For all $n\ge \ell$, denote by $X_n$ the set of objects $z$ of $\C$ such that $\psi_n(\psi_{n-1}(\ldots(\psi_{\ell}(x))))$ -- the image of $x$ under iteration of $\psi$ -- has a nonzero component indexed by a chain starting in $z$.

We know that $X_n$ is finite (since the domain is a direct sum). By property ($\ast$), we know that for every $z\in X_n$, there is a $z'\in X_{n-1}$ with $z<z'$. Iterating this, we get for every $z\in X_n$ a chain $z=z_n<z_{n-1} <\ldots < z_\ell$ with $z_i\in X_i$. The lemma below finishes the proof. \end{proof}

\begin{lemma}\label{tychonoff}
Let $X$ be as in the proof of Proposition \ref{F-surjective-tychonoff}. We have $X_L=\emptyset$ for $L$ large.
\end{lemma}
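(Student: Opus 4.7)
The plan is to argue by contradiction using König's lemma. Suppose toward contradiction that $X_L \neq \emptyset$ for every $L \geq \ell$. Two preliminary facts, essentially isolated in the proof of Proposition~\ref{F-surjective-tychonoff}, underpin the argument. First, each $X_n$ is a \emph{finite} set of objects, since $X_n$ is the (finite) set of starting objects of the chains indexing the nonzero summands of $\psi_{n-1}\circ\cdots\circ\psi_\ell(x)$ in a direct sum. Second, for every $z \in X_n$ there exists some $z' \in X_{n-1}$ with $z < z'$: by property~($\ast$), the nonzero component at $z$ must be in the image of some nonzero component indexed by a chain starting in an element $z' \in X_{n-1}$ strictly above $z$.

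I would then package these data into a finitely-branching tree $T$. Declare the vertices at level $n-\ell$ (for $n \geq \ell$) to be all strictly decreasing chains $(z_\ell, z_{\ell+1}, \ldots, z_n)$ in the poset $X$ with $z_i \in X_i$ for each $i$; take the parent of such a vertex to be the chain with its last entry deleted, and adjoin an empty chain as the root. The children of $(z_\ell,\ldots,z_n)$ are then parametrised by the finitely many $z_{n+1} \in X_{n+1}$ satisfying $z_{n+1} < z_n$, so $T$ is finitely branching. Starting from any $z_n \in X_n$ (nonempty by hypothesis) and iterating the second fact above yields $z_{n-1} > z_n$ in $X_{n-1}$, then $z_{n-2} > z_{n-1}$ in $X_{n-2}$, and so on down to some $z_\ell \in X_\ell$, producing a vertex of $T$ at level $n-\ell$; thus every level is nonempty and $T$ is infinite.

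With these two properties in hand, König's lemma yields an infinite branch of $T$, namely an infinite sequence $z_\ell > z_{\ell+1} > z_{\ell+2} > \cdots$ in $X$. This is exactly a left-infinite chain, contradicting the standing assumption that $X$ has none. The only step that requires genuine care is the finiteness of each $X_n$, which rests on the finite-support property of elements in a direct sum; once that is secured, the tree construction and König's lemma combine to finish the proof with no further surprises.
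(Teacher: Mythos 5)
Your proof is correct. The two preliminary facts you isolate (finiteness of each $X_n$ from the finite-support property of direct sums, and the backward-extension property coming from ($\ast$)) are exactly the inputs the paper uses, and your tree of decreasing chains with the deletion-of-last-entry parent map is finitely branching and has all levels nonempty under the contradiction hypothesis, so K\H{o}nig's lemma does produce a left-infinite chain.

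The route differs from the paper's in the formal machinery, though not in substance. The paper gives each finite set $X_i$ the discrete topology, forms the compact product $\prod_{i=\ell}^{\infty} X_i$ via Tychonoff's theorem, and observes that the closed sets $K_j$ of sequences that are decreasing chains up to index $j$ have empty total intersection (no left-infinite chains), hence empty intersection already at some finite stage $L$; since every element of $X_n$ yields a point of $K_n$, this forces $X_L=\emptyset$. Your argument is the contrapositive formulation via K\H{o}nig's lemma on a finitely branching tree. These are two faces of the same compactness principle --- indeed the paper itself remarks that its argument "can be traced back to \cite{koenig}" --- so nothing essential is gained or lost. What your version buys is that it is entirely combinatorial and self-contained, requiring no topology; what the paper's version buys is brevity, since the decreasing intersection of closed sets packages the tree bookkeeping in one line. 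Either proof is acceptable here.
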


We separated this lemma since its proof is purely combinatorially. Actually, it uses a so-called compactness argument building on Tychonoff's Theorem that the product of compact topological spaces is compact. This is a classical combinatorial argument which can be traced back to \cite{koenig}. The first author thanks Jens Reinhold for teaching him how to apply compactness arguments many years ago.

\begin{proof}
Give every set $X_i$ the discrete topology and the product $K=\Pi_{i=\ell}^{\infty} X_i$ the product topology. It is compact by Tychonoff's Theorem since all $X_i$ are finite. Set
\[K_j=\{(z_i)\in K\mid z_i < z_{i-1} \,\mbox{for}\, \ell+1 \le i\le j\}\,.\]
The $K_i$ are closed in $K$. Also,
\[\bigcap_{i=\ell}^{\infty} K_i=\emptyset \]
since $\C$ has no left-infinite chains. Since $K$ is compact, this implies $K_L=\emptyset$ for some $L$. But we argued above that an element of $X_n$ always gives a chain in $K_n$. Thus, $X_L=\emptyset$.
\end{proof}

The following corollary includes Theorem \ref{intro-sum-of-rep} from the introduction.

\begin{cor} \label{equal_gl_dim-tychonoff} Let $N\ge 1$.
Let $\C$ be the free tensor category over a directed tensor quiver without left-infinite chains, satisfying $(\mathrm{A}_N)$, $(\mathrm{B}_N)$ and (BP), and $\gldim R_x\le N$ for all $x$. Then the global dimension of the category of left $k\C$-modules is at most $N$, and every projective module is an induced module as in \eqref{sum-of-representables}.
\end{cor}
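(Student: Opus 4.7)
The plan is to prove the stronger statement that \emph{every $N$-th kernel} $K$ in the category of left $k\C$-modules is simultaneously projective and isomorphic to a direct sum of induced modules of the form $\bigoplus_{x\in X} k\C\otimes_{R_x} S_x(K)$. This yields both claims of the corollary at once: the bound $\gldim k\C \le N$ falls out because the $N$-th syzygy of any module is then projective, and the structural description of projective modules follows by applying the statement to an arbitrary projective $P$, which is trivially an $N$-th kernel (take $P_N = P$ and all other $P_i = 0$ in the defining sequence).

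The first step is to observe that $K(x)$ is already projective over $R_x$ for every object $x$. By condition (BP) combined with Lemma~\ref{objectwise-projective}, evaluation at $x$ sends a defining resolution $0\to K\to P_N\to\cdots\to P_1$ of $K$ to an exact sequence in which every $P_i(x)$ is projective over $R_x$, exhibiting $K(x)$ as an $N$-th kernel in $R_x\Mod$. The hypothesis $\gldim R_x\le N$ then forces $K(x)$ to be projective. Consequently, in the short exact sequence $0\to B_x(K)\to K(x)\to S_x(K)\to 0$ which splits by Lemma~\ref{split} (using $(\mathrm{B}_N)$ and (BP)), the quotient $S_x(K)$ is a direct summand of a projective $R_x$-module and hence is itself projective over $R_x$. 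Each induced summand $k\C\otimes_{R_x} S_x(K)$ is therefore a projective left $k\C$-module, since $k\C\otimes_{R_x}-$ is the left adjoint of the exact evaluation functor at $x$ and left adjoints of exact functors preserve projectivity.

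With these preparations in hand, what remains is to verify that the comparison map $F$ of \eqref{def-F} is an isomorphism. Here one simply invokes Lemma~\ref{F-injective}, which provides injectivity of $F$ from condition $(\mathrm{A}_N)$, together with Proposition~\ref{F-surjective-tychonoff}, which supplies surjectivity from the hypothesis that $X$ has no left-infinite chains. The resulting isomorphism $K\cong\bigoplus_x k\C\otimes_{R_x} S_x(K)$ realises $K$ as a direct sum of projectives, completing the argument. The conceptual hard work has already been done in Proposition~\ref{F-surjective-tychonoff} and its combinatorial Lemma~\ref{tychonoff}, where the Tychonoff-style compactness principle converts the absence of left-infinite chains into the vanishing of the colimit appearing on the right of \eqref{magic-ses}; at the level of the corollary itself, the only task is to assemble the previously established pieces and check that $S_x(K)$ is projective, which is the only point at which the bound $\gldim R_x\le N$ is genuinely used.
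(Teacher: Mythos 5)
Your proposal is correct and assembles the argument in essentially the same way the paper intends: objectwise projectivity of $K(x)$ via Lemma~\ref{objectwise-projective} and $\gldim R_x\le N$ (which is exactly the step inside the proof of Lemma~\ref{split}), hence projectivity of the summand $S_x(K)$, and then bijectivity of $F$ from Lemma~\ref{F-injective} and Proposition~\ref{F-surjective-tychonoff}. Nothing is missing.
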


\section{Categories without the FFP}
\label{without-FFP}

In this final section, $\C$ is a discrete EI category which does not necessarily have the finite factorisation property FFP. We deal with the question
how to characterise hereditarity of $k\C$ in this case. We find a generalisation of the UFP in terms of a certain poset $\Theta(\alpha)$ which we can prove to be necessary, cf. Lemma~\ref{Theta-well-ordered} and Proposition~\ref{Theta-totally-ordered}. In all examples the authors have constructed where these combinatorial conditions, together with the usual algebraic conditions that the $kG_c$ are hereditary, and ($\mathrm{A}_d$) and $(\mathrm{B}_d)$, are satisfied, we could prove hereditarity of $k\C$, using the methods of Section~\ref{suff2}. However, we could not succeed to derive a general proof. In particular, it is not clear whether an additional algebraic condition is necessary in the non-FFP case, as discussed in Remark~\ref{splitting-off-infinite-direct-sum}.

We begin with two examples of categories without the FFP.

\begin{exam}
Consider the poset $\C$ of real numbers between $0$ and $1$ with the usual ordering. Viewed as an EI category, $\C$ has no unfactorisable morphisms, and its category algebra $k\C$ (with $k$ a field, say) is not hereditary, as follows from Proposition~\ref{Theta-well-ordered} below.

Instead of all real numbers in $[0,1]$, we could have worked with \nobreak{$\{0\}\cup \{\frac 1n, n\in \mathbb{N}\}$} here. In this case, there are some unfactorisable morphisms, but all non-invertible morphisms with source $0$ cannot be factored into a finite product of unfactorisables.
\end{exam}

\begin{exam}\label{hereditary-no-FFP}
Consider the partially ordered set $\{1-\frac{1}{n}\}\cup \{1\}$, or equivalently $\mathbb{N}\cup \{\infty\}$, and let $\C$ again denote the associated category which still doesn't have the FFP. If $k$ denotes a semisimple commutative ring, then $k\C$ is hereditary. This can be proved with the methods of Section \ref{suff2}.
\end{exam}

\begin{rema}
The methods of our first proof in Section \ref{sufficiency} are not available here since  Proposition~\ref{omega-of-tensor} doesn't apply: $k\C$ is not a tensor algebra any more. It would be interesting to find a description $\Omega_R^1 k\C$ in this situation.
\end{rema}

We will now derive a combinatorial necessary condition which presumably plays the role of the UFP in the non-FFP case, but is more difficult to formulate.

\begin{defi} \label{defi:Theta} For any morphism $\alpha\colon x\rightarrow y$ in $\C$,
let $\Theta(\alpha)$ denote the following poset. Elements are equivalence classes of triples $(t,g,f)$, where $t$ is an object of $\C$, $f\in \C(x,t)$ and $g\in \C(t,y)$, satisfying $\alpha=g\circ f$. The equivalence relation is as follows: $(t,g,f)\sim (t',g',f')$ if there is an isomorphism $h\colon t\cong t'$ such that $g'=g\circ h^{-1}$ and $f'=h\circ f$.

We order $\Theta(\alpha)$ by setting $[t,g,f]\le [t',g',f']$ if there exists $h\colon t\rightarrow t'$ such that $f'=h\circ f$ and $g=g'\circ h$.
\end{defi}

It is easily checked that the ordering on $\Theta(\alpha)$ is a well-defined partial ordering.

\begin{exam}
Suppose that $\alpha=u_n\circ u_{n-1}\circ \ldots \circ u_1$ is a finite product of unfactorisables. Then $\Theta(\alpha)$ contains the finite chain
\[[x,\alpha, \mathrm{id}]< [x_1, u_n\circ u_{n-1}\circ \ldots \circ u_2, u_1] < \ldots < [x_{n-1}, u_n, u_{n-1}\circ \ldots \circ u_1] < [y,\mathrm{id},\alpha]\]
where $u_i\colon x_{i-1}\rightarrow x_i$, $x_0=x$ and $x_n=y$.

If the above factorisation is essentially unique in the sense of Def.~\ref{UFP} -- for instance if $\C$ has the UFP -- then $\Theta(\alpha)$ consists of this chain only. Conversely, one can easily prove that if $\Theta(\alpha)$ consists only of this chain, then the factorisation is essentially unique.

Consequently, if $\C$ has the FFP, then $\C$ has the UFP if and only if each $\Theta(\alpha)$ consists of a single finite chain.

In absence of the FFP, this is substituted by the conditions of being totally ordered and well-ordered, as the following two results show.
\end{exam}

\begin{lemma}\label{Theta-totally-ordered}
If $k\C$ is hereditary, then $\Theta(\alpha)$ is totally ordered for all $\alpha$. In particular, $[t,g,f]\le [t',g', f']$ if and only if $t\le t'$.
\end{lemma}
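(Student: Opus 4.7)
The plan is to imitate the strategy of Proposition~\ref{UFP-uniqueness}, using the two basic structural tools from the necessity section: the intersection lemma (Lemma~\ref{intersection of projectives}) and the induced-module description of projectives (Lemma~\ref{describe projective modules}). Fix two factorisations $\alpha = g_1\circ f_1 = g_2\circ f_2$ with $f_i\colon x\to t_i$ and $g_i\colon t_i\to y$, representing elements $[t_1,g_1,f_1]$ and $[t_2,g_2,f_2]$ of $\Theta(\alpha)$. The central idea is to translate the partial order on $\Theta(\alpha)$ into inclusion of the cyclic projective submodules $k\C f_1, k\C f_2 \subseteq k\C e_x$.

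First I would apply Lemma~\ref{intersection of projectives} to $f_1, f_2$: both modules contain $\alpha = g_1 f_1 = g_2 f_2$, so their intersection is non-zero, which forces one inclusion, say $k\C f_1 \subseteq k\C f_2$. This yields some $h \in \C(t_2,t_1)$ with $f_1 = h \circ f_2$. What remains is to correct $h$ so that also $g_1 \circ h = g_2$ holds, since the plain equality $g_1 h f_2 = g_2 f_2$ is not cancellable in a set-enriched category.

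The correction step uses the hereditarity of $k\C$: since $k\C f_2$ is projective and is clearly generated by its value at $t_2$, namely $kG_{t_2}\cdot f_2 \cong k[G_{t_2}/H]$ for $H := \mathrm{Stab}_{G_{t_2}}(f_2)$, Lemma~\ref{describe projective modules} gives $k\C f_2 \cong k\C \otimes_{kG_{t_2}} k[G_{t_2}/H]$. Restricting to $y$ yields
\[(k\C f_2)(y) \cong k\C(t_2,y)\otimes_{kG_{t_2}} k[G_{t_2}/H] \cong k[\C(t_2,y)/H]\,,\]
with $\beta f_2 \leftrightarrow [\beta]$. Both $g_2 f_2$ and $g_1 h f_2$ equal $\alpha$ on the left, so $[g_2] = [g_1 h]$ on the right, giving $h'\in H$ with $g_2 = g_1 h h'$. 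Setting $\tilde h := h h' \in \C(t_2,t_1)$ and using $h' f_2 = f_2$, one obtains $f_1 = \tilde h \circ f_2$ and $g_2 = g_1 \circ \tilde h$, so $[t_2,g_2,f_2] \le [t_1,g_1,f_1]$. Symmetrically, if $k\C f_2 \subseteq k\C f_1$ one obtains the reverse inequality, proving total orderedness.

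For the ``in particular'' clause, the forward direction is immediate from the definition (the witnessing morphism $h\colon t\to t'$ shows $t\le t'$). For the converse, assume $t\le t'$ and invoke total orderedness: either $[t,g,f]\le [t',g',f']$, which is what we want, or $[t',g',f']\le [t,g,f]$, which gives $t'\le t$, hence $t=t'$ by skeletality; in this case the witnessing morphism $t' \to t = t'$ is an element of $G_{t'}$, so it is an isomorphism, and the defining equivalence relation on $\Theta(\alpha)$ turns $[t',g',f']\le [t,g,f]$ into $[t',g',f'] = [t,g,f]$, so both directions hold trivially. The main obstacle is the correction argument in the middle paragraph: without the induced-module description coming from hereditarity, one can only guarantee equality of compositions, not that a single morphism $\tilde h$ simultaneously implements both ladder conditions $f_1 = \tilde h f_2$ and $g_2 = g_1 \tilde h$.
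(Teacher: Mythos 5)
Your proof is correct and is essentially the paper's own argument: the paper proves this lemma by simply referring to the first step of the proof of Proposition~\ref{UFP-uniqueness}, which is exactly the combination you carry out of Lemma~\ref{intersection of projectives} (to get one inclusion $k\C f_1\subseteq k\C f_2$ and hence a morphism $h$ with $f_1=h\circ f_2$) and the orbit/stabiliser correction via Lemma~\ref{describe projective modules} (to replace $h$ by $\tilde h=hh'$ making both triangles commute). Your handling of the ``in particular'' clause via skeletality and the EI property is also fine.
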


\begin{proof}
This works exactly as the first step of the proof of Proposition \ref{UFP-uniqueness} (before the induction on the length of $\alpha$).
\end{proof}

\begin{prop}\label{Theta-well-ordered}
If $k\C$ is hereditary, then $\Theta(\alpha)$ is well-ordered for all $\alpha$.
\end{prop}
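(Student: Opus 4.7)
The plan is to derive a contradiction from the existence of an infinite strictly descending chain
\[[t_1,g_1,f_1]>[t_2,g_2,f_2]>[t_3,g_3,f_3]>\ldots\]
in $\Theta(\alpha)$ by producing an element of a projective $k\C$-module which is simultaneously non-zero and forced to vanish. By Lemma~\ref{Theta-totally-ordered}, such a chain furnishes morphisms $h_i\colon t_{i+1}\to t_i$ with $f_i=h_i\circ f_{i+1}$ and $g_{i+1}=g_i\circ h_i$, together with a strictly descending chain of pairwise non-isomorphic objects $t_1>t_2>t_3>\ldots$ in $\C$. The factorisations $f_i=h_i f_{i+1}$ give an ascending chain of submodules $k\C f_1\subseteq k\C f_2\subseteq \ldots$ inside the projective module $k\C\cdot e_x$, so I set $M=\bigcup_i k\C f_i$. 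Since $k\C$ is hereditary, $M$ is projective.

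I would then form the natural surjection $\pi\colon P=\bigoplus_i k\C f_i\twoheadrightarrow M$ obtained by summing the inclusions and, using projectivity of $M$, pick a $k\C$-linear section $s\colon M\to P$. Evaluating at $t_i$, the element $s(f_i)$ lives in
\[P(t_i)=\bigoplus_j (k\C f_j)(t_i)=\bigoplus_j k\C(t_j,t_i)\cdot f_j.\]
Since $t_j>t_i$ strictly whenever $j<i$, the EI property of $\C$ together with skeletality forces $\C(t_j,t_i)=\emptyset$ for $j<i$. Hence $s(f_i)=(s_{i,j})_{j\ge i}$ is supported on indices $j\ge i$ and has finite support since it lies in a direct sum.

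The key step applies naturality of $s$ to $f_i=h_i\circ f_{i+1}$, giving $s(f_i)=h_i\cdot s(f_{i+1})$ in $P(t_i)$. Comparing the two sides component by component and using that $(k\C f_i)(t_{i+1})=k\C(t_i,t_{i+1})\cdot f_i=0$ (since there is no morphism $t_i\to t_{i+1}$), I read off
\[s_{i,i}=0\qquad\text{and}\qquad s_{i,j}=h_i\cdot s_{i+1,j}\ \text{for all }j\ge i+1.\]
Iterating the second relation yields $s_{i,j}=h_ih_{i+1}\cdots h_{j-1}\cdot s_{j,j}=0$, and combined with the first this forces $s(f_i)=0$ for every $i$. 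Applying $\pi$ then produces the contradiction $f_i=\pi(s(f_i))=0$, so $\Theta(\alpha)$ admits no infinite descending chain.

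The main obstacle to handle carefully will be the bookkeeping of the $j=i$ summand of the naturality equation at $t_i$: it is essential that $(k\C f_i)(t_{i+1})=0$, so that the $j=i$ component of $h_i\cdot s(f_{i+1})$ is automatically zero, yielding $s_{i,i}=0$ and triggering the cascade $s_{j,j}=0\Rightarrow s_{i,j}=0$ for $i\le j$. Once this is set up correctly, no additional hypotheses beyond hereditarity and the existence of the descending chain are needed, and the argument applies to arbitrary discrete EI categories, including those lacking the FFP.
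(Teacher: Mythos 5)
Your proof is correct and is essentially the paper's own argument: your $M=\bigcup_i k\C f_i$ is exactly the paper's non-projective submodule $K$ of $k\C e_x$ for $S$ the descending chain, and your zero-propagation $s_{j,j}=0\Rightarrow s_{i,j}=0$ along the chain is the same mechanism by which the paper shows $\Hom_{\C}(K,B)=0$ (hence that the canonical surjection admits no section). The only cosmetic differences are that the paper works with an arbitrary subset of $\Theta(\alpha)$ without a minimal element rather than a descending sequence, and uses the free modules $\C(t_i,-)$ in place of your cyclic modules $k\C f_i$.
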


\begin{proof}
Let $S$ be a subset without minimal element. Let $K$ denote the following subfunctor of $\C(x,-)$:
\[K(z) = k\left  \{ g\colon x\rightarrow z;\, \exists\, [t,g,f]\in S,\, h\colon t\rightarrow z \,\,\mathrm{s. t.}\,\, g=h\circ f \right \}\,.\]
We show that $K$ is not projective, so that $k\C$ will not be hereditary.

Let $B=\bigoplus\limits_{[t,g,f]\in S} \C(t,-)$. This has an obvious surjection $\pi$ to $K$ given by precomposition with $f$. We claim that $\mathrm{Hom}_\C(K,B)=0$. This implies that $\pi$ doesn't have a section and $K$ is not projective.

Consider a natural transformation $F\colon K\rightarrow B$. Choose an arbitrary $[t,g,f]\in S$ and consider $f\in K(t)$. Then $F(f)\in B(t)$ has only finitely many nonzero components. Let $[t_2,g_2, f_2]\in S$ denote the minimal index of a nonzero component. Since $S$ has no minimal element, there is $[t_1,g_1,f_1]$ which is strictly smaller than $[t_2,g_2, f_2]$ and $[t,g,f]$. Let $h\colon t_1\rightarrow t$ with $f=h\circ f_1$.

Consider the following commutative diagram:

\begin{equation*}
\begin{tikzcd}[row sep=large]
K(t_1) \arrow[r, "K(h)"] \arrow[d, "F"] & K(t) \arrow[d,"F"]\\
B(t_1) \arrow[r,"B(h)"] & B(t)
\end{tikzcd}
\end{equation*}
and start with $f_2$ in the upper right corner. Going via $K(t)$, it is first mapped to $f$ and then to $F(f)$. Going via $B(t_2)$, by definition of $B$, it is mapped to an element of
\[B(t)= \bigoplus\limits_{[t',g',f']\in S} \C(t',t)\]
which is concentrated in components indexed by elements $t'\le t_1 <t_2$. On the other hand, $F(f)$ only has nonzero components indexed by elements $t' \ge t_2$. Thus, $F(f)=0$. By definition of $K$, it follows that $F=0$.
\end{proof}

\begin{rema}
The last two results imply that we can associate to every morphism $\alpha$ a unique 'infinite factorisation into unfactorisables'. Indeed, we can extract for any element $\kappa+1\in \Theta(\alpha)$ which is not a limit element an unfactorisable morphism $u_{\kappa+1}\colon x_{\kappa}\rightarrow x_{\kappa+1}$  such that $f_{\kappa+1}=u_{\kappa+1}\circ f_{\kappa}$. This gives us a sequence $(u_{\kappa})_{\kappa}$ which is unique up to the equivalence relation known from the FFP case (cf.\ Def.~\ref{UFP}), applied at any intermediate non-limit element, and we may think of $\alpha$ as the 'infinite composition of the $u_\kappa$'.

However, the existence of limit elements in $\Theta(\alpha)$ makes this point of view problematic. In particular, there are the following three important caveats:
\begin{itemize}
    \item A sequence of unfactorisables might or might not have an 'infinite composition'.
    \item There might be two different morphisms $\alpha, \beta\colon x\rightarrow y$ with the same $u_\kappa$ sequences. This cannot happen in the FFP case since $\alpha$ is always the composition of the finitely many $u_\kappa$.
    \item The stabliser of $\alpha$ under the right $G_y$-action is not determined by the $u_\kappa$-sequence. This is also in contrast to the situation where $\C$ satisfies the FFP and UFP.
\end{itemize}
We want to stress that it is possible to construct examples demonstrating these phenomena such that the category in question has hereditary category algebra, so that these are important for the sake of this paper.
\end{rema}

\begin{rema} \label{splitting-off-infinite-direct-sum}
Finally, we want to point to a possible additional algebraic obstruction occurring in the case of categories without the FFP. Let us consider a directed $k$-linear category $\C$ which has $\mathbb{N}\cup \{\infty\}$ as underlying poset of objects as in Example~\ref{hereditary-no-FFP}, but now with arbitrary hereditary endomorphism rings $R_n$, $n\in \mathbb{N}$, and $R_{\infty}$ at the objects. Suppose we have a submodule $K$ of the projective module $\C(1,-)$ and want to prove that it is projective. Going along the same route as in the proof of Lemma \ref{split}, we can show that \[K(n) \cong B_n(K) \oplus S_n(K)\]
for every $n$. Let us now consider the situation at the object $\infty$. Denoting by $S_n$ the image of $S_n(K)$ in $K(\infty)$, we can show (if Conditions (A) and (B) are satisfied) that $K(\infty)$ splits off its submodule $S_1\oplus \ldots \oplus S_n$ for every $n$. However, for projectivity of $K$, it is necessary and sufficient that $K$ splits off $\bigoplus_{n=1}^{\infty} S_n$. In general, if a module $X$ splits off submodules $S_1\oplus \ldots \oplus S_n$, this does \emph{not} imply that it splits off the infinite direct sum $\bigoplus_{n=1}^{\infty} S_n$ (see \cite{mo} for an example with $R$ hereditary and $X$ projective), but it is not clear to the authors whether it is possible to construct an example where this effect impedes hereditarity of $\C$. It thus remains unclear whether an additional algebraic condition is necessary at this point.
\end{rema}

\begin{appendix}
\section{Normalisers in groups acting on trees}
\label{apx: trees}

Let $\pi=\pi_1(G,Y,P_0)$ be the fundamental group of a connected \emph{finite} graph of $k^\times$-finite groups $(G,Y)$. In this appendix, we analyse the condition appearing in Theorem~\ref{orbit-hereditary} that a finite subgroup  $F\subseteq \pi$ has finite normaliser $N_\pi(F)$. We first treat this question combinatorially in terms of the graph of groups, and then geometrically. The geometry enters by the well-known fact that $\pi$ acts on the Bass-Serre tree $X=\widetilde{X}(G,Y,T)$ with finite stabilisers and finite quotient. Here $T$ is a spanning tree of the quotient graph $Y$. We use the notation, constructions and main results of the famous and beautiful book \cite{trees}.

We now explain how to read off the normaliser of a finite subgroup  $F\subseteq \pi$ from the graph of groups. By conjugating (in $\pi$) if necessary, we may assume that $F$ fixes a vertex in the chosen lift of $T$ to $X$.

Let $c$ be a path in $Y$, given by edges $y_1, \ldots, y_n$. We put $P_i=t(y_i)=o(y_{i+1})$.  Recall that a \emph{word of type $c$} is a pair $(c,\mu)$ where $\mu=(r_0,\ldots, r_n)$ with $r_i\in G_{P_i}$. It is reduced if $n=0$ and $r_0\neq 1$, or if $n>0$ and whenever $y_{i+1}=\overline{y_i}$, we have $r_i\notin G_{y_i}^{y_i}$. Every reduced word with $c$ a circle is nontrivial in $\pi$, and every word in $\pi$ can be written as a reduced word. If we restrict to paths starting and ending in $P_0$, as we do in $\pi_1(G,Y,T)$, then $c$ is unique and $\mu$ is unique up to the equivalence relation generated by
\begin{align}\label{equivalence-reduced-words}(r_0, \ldots, r_n) \sim (r_0, \ldots, r_i a^{\overline{y_{i+1}}}, (a^{y_{i+1}})^{-1}r_{i+1}, \ldots, r_n)\end{align}
with $a\in G_{y_{i+1}}$ \cite[p.~50]{trees}.

Let $(c,\mu)$ be a reduced word as above. For $0\le i \le n$, let $_{i}c$ denote the starting segment $(y_1, \ldots y_i)$ of $c$, and $_i\mu$ the starting segment $(r_0,\ldots r_i)$ of $\mu$. We view $(_i c, _i\mu)$ as a reduced word centered at $P_0$ by going the same path backwards with trivial labels.

\begin{lemma} \label{normalising-circle}
Let $F\subseteq G_{P_0}$ be a finite subgroup, and let $(c,\mu)$ be a reduced word such that $|c,\mu|$ normalises $F$. Then
\[|_i c,_i\mu|^{-1} F |_i c,_i\mu| \subseteq G_{\overline{y_{i+1}}}^{\overline{y_{i+1}}}\,. \]
\end{lemma}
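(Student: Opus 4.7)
The plan is to argue geometrically, using the action of $\pi$ on the Bass–Serre tree $X=\widetilde X(G,Y,T)$ together with the standard correspondence between reduced words and geodesics in $X$. Let $v_0\in X$ be the lift of $P_0$ with stabiliser $G_{P_0}$; then $F$ fixes $v_0$ by hypothesis. Since $|c,\mu|$ normalises $F$, for $f\in F$ we may set $f'=|c,\mu|^{-1}f|c,\mu|\in F\subseteq G_{P_0}$ and compute $f\cdot(|c,\mu|v_0)=|c,\mu|\cdot f'v_0=|c,\mu|v_0$, so $F$ also fixes $v:=|c,\mu|v_0$. Because $X$ is a tree, $F$ then pointwise fixes the unique geodesic from $v_0$ to $v$.

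The second step invokes Serre's correspondence \cite[Ch.~I, \S 5]{trees} between reduced words in the fundamental groupoid of $(G,Y)$ and reduced (i.e.\ geodesic) paths in $X$. Applied to $(c,\mu)$, this shows the above geodesic has length exactly $n$, passes through vertices $v_0,v_1,\ldots,v_n=v$ with $v_i$ a lift of $P_i$, and has edges $e_{i+1}$ lifting $y_{i+1}$. Moreover the element $|_ic,_i\mu|\in\pi$ obtained by closing up the initial segment at $P_0$ is precisely the one that transports the canonical edge $\tilde y_{i+1}$, based at $\tilde P_i$, onto the edge $e_{i+1}$ of the realised geodesic; equivalently, $|_ic,_i\mu|^{-1}$ carries the local geometric data at the $i$-th stage back to the canonical data at $\tilde P_i$.

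The conclusion then drops out: since $F$ fixes $e_{i+1}$, the conjugate $|_ic,_i\mu|^{-1}F\,|_ic,_i\mu|$ fixes the canonical lift $\tilde y_{i+1}$. The $\pi$-stabiliser of $\tilde y_{i+1}$, regarded inside the vertex group $G_{P_i}=G_{o(y_{i+1})}$, is exactly the image $G_{\overline{y_{i+1}}}^{\overline{y_{i+1}}}$ of the edge group under the embedding $a\mapsto a^{\overline{y_{i+1}}}$, giving the desired inclusion. The hard part will be pinning down the middle step precisely: $(_ic,_i\mu)$ closed up with trivial labels is typically not a reduced word in Serre's sense, so one has to work in the fundamental groupoid in order to identify $|_ic,_i\mu|$ with the element that relates $\tilde y_{i+1}$ to $e_{i+1}$. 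A purely algebraic alternative, avoiding $X$ altogether, is available through the equivalence relation \eqref{equivalence-reduced-words}: applied to the identity $f\cdot|c,\mu|=|c,\mu|\cdot f'$ between two reduced words of type $c$, it produces elements $a_j\in G_{y_{j+1}}$ satisfying $r_0^{-1}f^{-1}r_0=a_0^{\overline{y_1}}$ and the recursion $a_j^{\overline{y_{j+1}}}=r_j^{-1}a_{j-1}^{y_j}r_j$, from which the stated containment follows by unwinding inductively in $i$.
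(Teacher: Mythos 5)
Your proposal is correct and follows essentially the same route as the paper: there, too, one observes that $F$ fixes the geodesic from $P_0$ to $|c,\mu|P_0$ pointwise, reads off a reduced word presentation of $|c,\mu|$ from that geodesic, matches it to $(c,\mu)$ by uniqueness of reduced words, and concludes from the stabiliser of the edge $|_ic,{}_i\mu|\,y_{i+1}$. The subtlety you flag in the middle step is handled in the paper exactly as in your \emph{algebraic alternative}: one checks that the asserted containment is insensitive to the equivalence relation \eqref{equivalence-reduced-words}, so the labels produced by the geodesic may be assumed to coincide with the $r_i$.
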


Here $G_{\overline{y_{i+1}}}^{\overline{y_{i+1}}}$
denotes the image of $G_{\overline{y_{i+1}}} = G_{y_{i+1}}$ in $G_{t(\overline{y_{i+1}})} = G_{o(y_{i+1})}=G_{P_i}$ as usual. Intuitively, the lemma says that every element normalising a subgroup $F$ describes a way how to move this subgroup along the graph of groups, starting in $P_0$ and inserting conjugations at subsequent vertices if necessary to push it into the next edge group. In the end, we arrive at $P_0$ again, with a subgroup conjugate to $F$ in $G_{P_0}$.

\begin{proof}
Let $g=|c,\mu|$. Then $F$ fixes $P_0$ and $gP_0$, thus the geodesic between the two in $X$. We show how to use this geodesic to write $g$ in reduced form satisfying the conjugation assertion of the lemma. By uniqueness of reduced word presentations, up to equivalence as described above, this proves the lemma.

Let $(z_1, \ldots, z_m)$ be the image of the geodesic from $P_0$ to $gP_0$ in $Y$, with $Q_i=t(z_i)=o(z_{i+1})$. We have $Q_0=Q_m=P_0$. We can write the first edge of the path in $X$ as $s_0z_1$ with $s_0\in G_{P_0}$ since it is incident to $P_0$. The second vertex then equals $s_0Q_1$. The second edge can be written as $(s_0z_1s_1) z_2$ with $s_1\in G_{Q_1}$ since it is incident to $s_0Q_1$. Here, $s_0z_1s_1$ is to be understood as an element of $\pi$ (we supress inserting the path back to $P_0$ with trivial labels in this proof) which acts on the edge $z_2$. Inductively, one gets a description of the path as in Figure~\ref{path}.

\begin{figure}
    \centering
    \includegraphics[width=\textwidth]{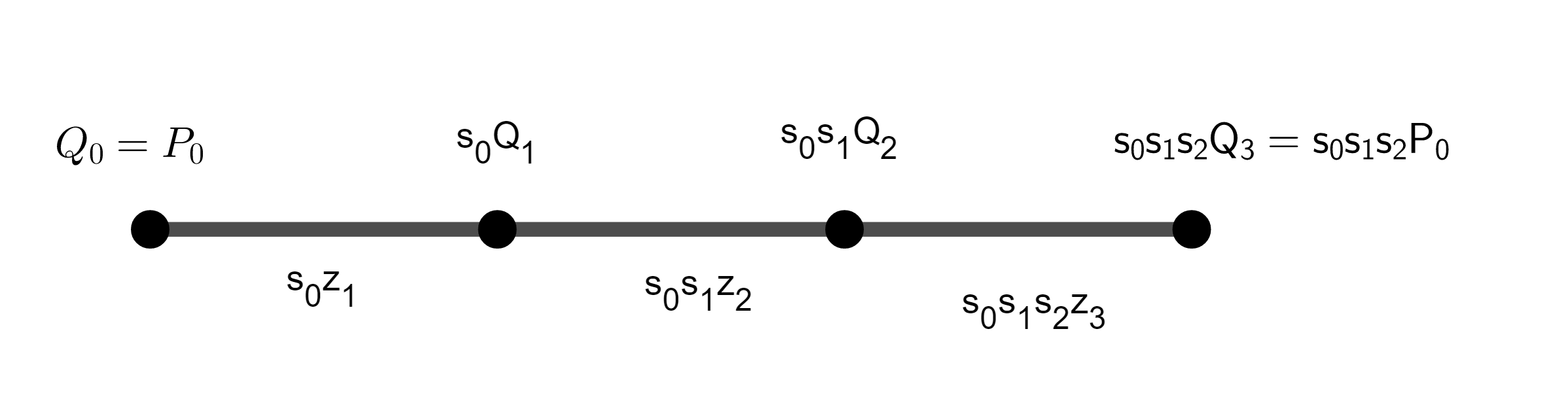}
    \caption{The path in $X$, in the case $m=3$.}
    \label{path}
\end{figure}

We have
\[(s_0\ldots s_{m-1})P_0=(s_0\ldots s_{m-1})Q_m=gP_0\]
and thus $g=s_0\ldots s_{m-1} z_m s_m$ with $s_m\in G_{P_0}$. The $z_i$ and $s_i$ define another reduced word presentation of $g$. It follows that $m=n$, $z_i=y_i$ and $Q_i=P_i$.
The $r_i$ and $s_i$ are linked via the equivalence relation generated by \eqref{equivalence-reduced-words}. But it is easy to check that the statement of the lemma is insensitive to this equivalence relation, and thus we assume $r_i=s_i$ without loss of generality.

Consequently, $F$ fixes the edge $|_i c, _i \mu| y_{i+1}$ and is thus contained in its stabiliser, which yields the claim of the lemma.
\end{proof}

Lemma~\ref{normalising-circle} can be used to check whether the normaliser $N_\pi(F)$ of $F$ is infinite, for example in conjunction with the equivalence of the first two items of  Proposition~\ref{infinite-normaliser} below.

\begin{exam}
\label{example:edge}
Let $Y$ consist of an edge $y$, with vertex groups $A$ and $B$ and edge group $C$. We identify $C$ with both its images in $A$ and $B$. Let $F\subseteq A$.

We can draw the following conclusions from the above lemma:
\begin{itemize}
    \item If $F$ is not subconjugate to $C$ in $A$, then $N_\pi(F)=N_A(F)$.
    \item If $F\subseteq C$ and there exist $a\in N_A(F)\setminus C$ and $b\in N_B(F)\setminus C$, then $N_\pi(F)$ contains the element $1yb\overline{y}a$ of infinite order.
\end{itemize}
We emphasise that the second condition is \emph{not} necessary for the infinity of the normaliser. For example, let $A=B$ equal the dihedral group \[D_8=\langle \sigma, \tau\mid \sigma^4=\tau^2=(\sigma\tau)^2\rangle\] and let \[C=\langle \tau, \sigma^2\tau\rangle = \{ 1, \tau, \sigma^2\tau, \sigma^2\}\,.\] Let $F$ be the $2$-element subgroup generated by $\tau$. Then $N_B(F)=C$, so the second item cannot be satisfied. However, the normaliser of $F$ contains the infinite order element given by the reduced word $1y\sigma\overline{y}\sigma^{-1}$.

What happens here is that $F=\langle \tau \rangle$ is conjugated in the second step into $\langle \sigma^2\tau\rangle$ which still lies in $C$, and then back into $F$ in the third step. Similarly, there can be situations when a chain of length two doesn't suffice, but length $3$ or higher is necessary.
\end{exam}

\begin{exam}
Let $Y$ consist of a loop $t$, with vertex group $A$ and loop group $C$. As usual, we identify $C$ with one of its images in $A$ and denote the other by $\iota(C)$.
We can draw the following conclusions from the above lemma:
\begin{itemize}
    \item If $F$ is neither subconjugate to $C$ nor to $\iota(C)$ in $A$, then $N_\pi(F)=N_A(F)$.
    \item If $F\subseteq C$ and there exists $a\in \mathrm{Trans}_A(\iota(F), F)$, then $N_\pi(F)$ contains the element $1ta$ of infinite order.
\end{itemize}
Again, the second condition is not the only way to produce an infinite order element: Let $A=C$ equal the Klein $4$-group $\{1, a, b, c\}$, and let $\iota\colon C \rightarrow C$ be the order $3$ automorphism mapping $a$ to $b$, $b$ to $c$ and $c$ to $a$. Then $F=\langle a \rangle$ cannot be normalised by going around the loop once -- note that all conjugations in $A$ are trivial --, but the (infinite order) element $t^3=1t1t1t1$ normalises $F$.
\end{exam}

A \emph{ray} in $X$ is a geodesic embedding of the metric space $[0,\infty)$ (with the standard metric) into $X$, and a \emph{line} in $X$ is a geodesic embedding $\mathbb{R}\hookrightarrow X$. The existence of a $\mathrm{CAT}(0)$ metric on $X$ equips it with a Gromov boundary. The underlying set can be described as the set of all rays emanating from a fixed point $p$ \cite[Lemma~III.H.3.1]{bridson-haefliger}. Since $X$ is a tree, there is no need of an equivalence relation on the rays. Topologically, $\partial X$ is a Cantor space.

\begin{prop}\label{infinite-normaliser}
Let $F$ be a finite subgroup of $\pi$. Then the following are equivalent:
\begin{itemize}
    \item[(i)] $N_\pi(F)$ is infinite,
    \item[(ii)] $N_\pi(F)$ contains an element of infinite order,
    \item[(iii)] $X^F$ is an infinite graph,
    \item[(iv)] $F$ fixes a ray in the tree $X$ pointwise,
    \item[(v)] $F$ fixes a line in the tree $X$ pointwise,
    \item[(vi)] $F$ fixes a point on the Gromov boundary $\partial X$,
    \item[(vii)] $F$ fixes two points on the Gromov boundary $\partial X$.
\end{itemize}
\end{prop}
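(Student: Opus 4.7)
The proof rests on two basic facts about the Bass-Serre tree: since $Y$ is a finite graph of finite groups, $X$ is locally finite and $\pi$ acts on it cocompactly and without inversions with finite stabilisers; and since $F$ is finite, by the classical fixed-point theorem for finite groups on trees, $X^F$ is a nonempty subtree.

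The crucial auxiliary step would be to show that the $N_\pi(F)$-action on $X^F$ is itself cocompact. For a vertex $v\in X$, the vertices in the $\pi$-orbit of $v$ lying in $X^F$ are precisely the $gv$ with $g^{-1}Fg\subseteq \pi_v$; as the finite group $\pi_v$ has only finitely many subgroups conjugate to $F$, and since the elements $g\in \pi$ realising a fixed such conjugate $F'=g^{-1}Fg$ form a single left coset of $N_\pi(F)$, the natural map $N_\pi(F)\backslash X^F\rightarrow \pi\backslash X$ has finite fibres. Combined with finiteness of $\pi\backslash X$, this makes the $N_\pi(F)$-action on $X^F$ cocompact with finite vertex stabilisers. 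From here (i) $\Leftrightarrow$ (iii) is immediate: if $X^F$ is finite then the kernel of the action of $N_\pi(F)$ on the finite vertex set of $X^F$ has finite index and sits inside a finite vertex stabiliser; conversely an infinite $X^F$ necessarily has an infinite $N_\pi(F)$-orbit. The equivalence (iii) $\Leftrightarrow$ (iv) is K\"onig's lemma applied to the locally finite subtree $X^F$; fixing a base vertex $v_0\in X^F$ and recalling that rays from $v_0$ correspond bijectively to points of $\partial X$ gives (iv) $\Leftrightarrow$ (vi); and (v) $\Leftrightarrow$ (vii) is standard tree geometry, since two distinct boundary points of a tree are joined by a unique line and the only finite group of isometries of $\mathbb{R}$ fixing both ends at infinity is trivial (translations have infinite order, reflections swap the ends). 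The implications (ii) $\Rightarrow$ (i) and (v) $\Rightarrow$ (iv) are obvious.

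What remains -- and is the main obstacle -- is to close the loop by simultaneously establishing (i) $\Rightarrow$ (ii) and (i) $\Rightarrow$ (v): when $N_\pi(F)$ is infinite, one must produce an element $g\in N_\pi(F)$ of infinite order whose translation axis lies entirely inside $X^F$. For this I would invoke Bass-Serre theory applied to the cocompact $N_\pi(F)$-action on $X^F$ with finite vertex and edge stabilisers: $N_\pi(F)$ is then the fundamental group of a finite graph of finite groups, hence finitely generated and virtually free, and any infinite virtually free group contains an infinite cyclic subgroup, yielding the desired $g$ -- which already settles (ii). Since the vertex stabilisers for the $N_\pi(F)$-action on $X^F$ are all finite, $g$ cannot be elliptic, so it acts hyperbolically on $X^F$ with a translation axis $A_g\subseteq X^F$; because $X^F$ is fixed pointwise by $F$, so is the line $A_g$, which gives (v) and closes the chain of implications.
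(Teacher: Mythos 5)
Your proof is correct, but it is organised around a different structural lemma than the paper's. Your key auxiliary step -- that $N_\pi(F)$ acts cocompactly on $X^F$ with finite stabilisers, proved by showing the map $N_\pi(F)\backslash X^F\rightarrow \pi\backslash X$ has finite fibres via the coset/conjugate-counting argument -- does not appear in the paper. It buys you (i) $\Leftrightarrow$ (iii) essentially for free, and it lets you deduce (i) $\Rightarrow$ (ii) from the clean statement that the fundamental group of a \emph{finite} graph of finite groups is virtually free, so an infinite such group contains $\mathbb{Z}$. The paper instead decomposes $N_\pi(F)$ via its action on all of $X$, obtaining a possibly infinite graph of finite groups with only a global bound on the vertex group orders, and then runs a more delicate torsion argument (every edge group must equal a vertex group, a vertex group of maximal order must be everything) to get (i) $\Rightarrow$ (ii); it proves (ii) $\Rightarrow$ (iii) and (iii) $\Rightarrow$ (i) by direct orbit-counting rather than through cocompactness. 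For (ii) $\Rightarrow$ (v) both arguments use the hyperbolic-element structure theorem; you get the axis inside $X^F$ automatically because your $g$ already acts on $X^F$, whereas the paper shows a posteriori that the axis in $X$ lies in $X^F$ by a geodesic argument. The remaining equivalences ((iii) $\Leftrightarrow$ (iv) via local finiteness and K\"onig's lemma, (iv) $\Leftrightarrow$ (vi), (v) $\Leftrightarrow$ (vii) via the ray description of $\partial X$) agree with the paper. Two cosmetic points: the set $\{g : g^{-1}Fg=F'\}$ is the coset $N_\pi(F)g_0$ (a right coset in the usual convention, though this changes nothing), and your finiteness of fibres should be noted for edge orbits as well as vertex orbits, which follows at once from local finiteness of $X$.
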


\begin{proof} \emph{(i) $\Rightarrow$ (ii).} $N_\pi(F)$ acts on the tree $X$ and is thus isomorphic to the fundamental group of a certain connected graph of finite groups $(G',Y')$. This need not be a finite graph of groups, but it inherits from $(G,Y)$ the property that there is a global bound on the orders of the groups $G'_P$. We show that this suffices for the existence of an element of infinite order if $N_\pi(F)$ is infinite.

Assume that $N_\pi(F)$ is a torsion group. The fundamental group of the graph of groups $(G',Y')$ surjects onto the usual topological fundamental group of $Y'$, which has an element of infinite order unless $Y'$ is a tree. Moreover, any edge group has to equal the vertex groups of one of  its two vertices. The reason is that there certainly is an element of infinite order if both inclusions are strict, see Example~\ref{example:edge} with $F=\{1\}$. Note that the fundamental group of the graph of groups on any subgraph of $Y'$ embeds into the whole fundamental group, as can be seen by considering reduced words. Now, let $P$ be a vertex of $Y'$ such that $G'_P$ is of maximal order. Then the two facts mentioned above ensure that the canonical map \[G'_P\longrightarrow \pi_1(G', Y', P) \cong N_\pi(F)\]
is an isomorphism, thus $N_\pi(F)$ is finite.

\emph{(ii) $\Rightarrow$ (iii).}
Since $F$ is finite, it fixes a vertex $x$. Let $g\in N_\pi(F)$ be of infinite order. For all $n$,  we have $g^{-n}Fg^n\subseteq \mathrm{Stab}_\pi(x)$, or, equivalently, $F\subseteq \mathrm{Stab}_\pi(g^nx)$. Since  the stabiliser of $x$ is finite, there are infinitely many points of the form $g^n x$.

\emph{(iii) $\Rightarrow$ (i).} Since there are finitely many $\pi$-orbits, $F$ in particular fixes infinitely many vertices in the same $\pi$-orbit. Suppose that this is the $G$-orbit of $x$, i.\ e. there are infinitely many $g\in \pi$ such that
\[F\subseteq \mathrm{Stab}_\pi(gx)\]
or, equivalently,
\[ g^{-1} F g\subseteq \mathrm{Stab}_\pi(x)\,.\]
Thus, the subgroup $M\subseteq \pi$ consisting of all $g$ with the above property is infinite. But $M$ acts on the finite set of $\pi$-conjugates of $F$ in $\mathrm{Stab}_{\pi}(x)$ by conjugation, and the stabiliser of $F$ equals $N_\pi(F)$ which is thus also infinite.

\emph{(iii) $\Leftrightarrow$ (iv).}
In the tree $X$, every vertex is of finite degree since this is true for the quotient $Y=\pi\backslash X$ and all edge stabilisers are finite. The same is thus true for $X^F$ which is connected by uniqueness of geodesics. Finally, a connected graph in which all vertices have finite degree is infinite if and only if it contains a ray \cite[Prop.~8.2.1]{diestel}.

\emph{(iv) $\Leftrightarrow$ (vi), (v) $\Leftrightarrow$ (vii).} In the description of the Gromov boundary as the set of rays emanating from a fixed vertex $p$ recalled directly before this Proposition, take $p$ to be a vertex fixed by $F$. Then $F$ acts on the set of rays, and the assertions translate into one another.

\emph{(ii) $\Rightarrow$ (v).} Let $g$ be of infinite order normalising $F$ and let $x$ be a vertex fixed by $F$. Then $F$ fixes all $g^nx$ with $n\in \mathbb{Z}$. Let
\[m = \mathrm{min}_{x \in \mathrm{vert} X} d(x,gx) >0\,.\]
By the structure theorem for hyperbolic elements \cite[Prop.~24]{trees}, there is a $g$-invariant line $L$ on which $g$ acts by translation by $m$. Moreover, if $z$ denotes the point on $L$ closest to $x$, then the geodesic from $x$ to $gx$ contains $z$ (and $gz$). Thus, $z\in X^F$. A similar argument applied to $g^n x$ shows that $g^nz$ is contained in $X^F$. Since $X^F$ is geodesically closed, this implies that it contains the whole line $L$.

\emph{(v) $\Rightarrow$ (iv).} This is a tautology.
\end{proof}

\end{appendix}

\end{document}